\documentclass{article}


\usepackage{hyperref}
\usepackage[bibtex-style]{amsrefs}
\usepackage{pstricks}
\usepackage{amsmath, amsthm, amssymb}
\usepackage{tikz}
\usepackage{wrapfig}
\usepackage{xspace}

\usetikzlibrary{patterns}
\usetikzlibrary{snakes}
\usetikzlibrary{shapes.geometric}

\setlength{\parindent}{0pt}
\setlength{\parskip}{1ex}

\newtheorem{theorem}{Theorem}
\newtheorem{corollary}[theorem]{Corollary}
\newtheorem{conjecture}[theorem]{Conjecture}
\newtheorem{proposition}[theorem]{Proposition}

\newtheorem{definition}[theorem]{Definition}

\newtheorem{observation}[theorem]{Observation}
\newtheorem{example}[theorem]{Example}

\newcommand{\bigclass}{{\mathcal M}}

\renewcommand{\S}{\mathbf{S}}
\newcommand{\R}{\mathbf{R}}
\newcommand{\A}{\mathbf{A}}
\newcommand{\B}{\mathbf{B}}
\newcommand{\Z}{\mathbf{Z}}
\renewcommand{\P}{P}

\newcommand{\avoids}{\mathop{\mathrm{Av}}}

\newcommand{\Av}{\avoids}

\newcommand{\post}{\mathop{\mathrm{\bf Post}}}
\newcommand{\inreading}{\mathop{\mathrm{\bf In}}}
\newcommand{\InTree}{\mathop{\mathrm{T_{in}}}}
\newcommand{\canT}{\mathop{\mathcal{T}}}

\newcommand{\Push}{\ensuremath{\mathbf{Push}}\xspace}
\newcommand{\Pop}{\ensuremath{\mathbf{Pop}}\xspace}

\DeclareMathOperator{\zeil}{\mathrm{zeil}}
\DeclareMathOperator{\Rzeil}{\mathrm{Rzeil}}

\newcounter{indice}

\newcommand{\permutation}[1]{
\setcounter{indice}{0};
\foreach \i in {#1}
\addtocounter{indice}{1};

\addtocounter{indice}{1}
\draw [help lines] (1,1) grid (\theindice,\theindice);

\setcounter{indice}{1};

\foreach \i in { #1 } {
\draw (\theindice+.5,\i+.5) [fill] circle (.2);
\addtocounter{indice}{1};
}
\addtocounter{indice}{-1};
}

\setlength{\textwidth}{14cm}
\setlength{\oddsidemargin}{0.25pt}
\setlength{\evensidemargin}{0.25pt}

\title{Operators of equivalent sorting power and related Wilf-equivalences}
\author{Michael Albert\thanks{Department of Computer Science, University of Otago, Dunedin, New Zealand, {\tt malbert@cs.otago.ac.nz}} \and Mathilde Bouvel\thanks{Institut f\"ur Mathematik, Universit\"at Z\"urich, Switzerland, {\tt mathilde.bouvel@math.uzh.ch}}}
\date{}

\begin{document}
\maketitle

\begin{abstract}
We study sorting operators $\mathbf{A}$ on permutations that are obtained composing Knuth's stack sorting operator $\mathbf{S}$ and the reversal operator $\mathbf{R}$, as many times as desired. For any such operator $\mathbf{A}$, we provide a size-preserving bijection between the set of permutations sorted by $\mathbf{S} \circ \mathbf{A}$ and the set of those sorted by $\mathbf{S} \circ \mathbf{R} \circ \mathbf{A}$, proving that these sets are enumerated by the same sequence, but also that many classical permutation statistics are equidistributed across these two sets. 
The description of this family of bijections is based on a bijection between the set of permutations avoiding the pattern $231$ and the set of those avoiding $132$ which preserves many permutation statistics. We also present other properties of this bijection, in particular for finding pairs of Wilf-equivalent permutation classes.
\end{abstract}

\section{Introduction}

Partial sorting algorithms were one of the early motivations for the study of permutation patterns. In the late 1960s, Knuth~\cite{Knuth:Art} considered the problem of sorting a permutation of $[n] = \{1,2,\dots,n\}$ using only a stack. 
This problem takes a permutation $\pi$ in one line notation as input, starts with an empty stack, and its goal is to sort $\pi$ using only the \Push and \Pop operations. 
Knuth showed that a permutation $\pi = \pi(1) \pi(2) \ldots \pi(n)$ may be sorted by a stack if and only if the following procedure sorts $\pi$: 

For $i$ from $1$ to $n$\\ 
\hspace*{0.7cm} While the stack is nonempty and $\pi(i)$ is larger than the top of the stack, \\
\hspace*{1cm} \Pop to the output\\ 
\hspace*{0.7cm} \Push $\pi(i)$ on the stack\\
While the stack is nonempty, \\
\hspace*{0.7cm} \Pop to the output

Not all permutations are stack sortable, and Figure~\ref{FIG:Stack} shows an example of a permutation that fails to be sorted by a stack. 

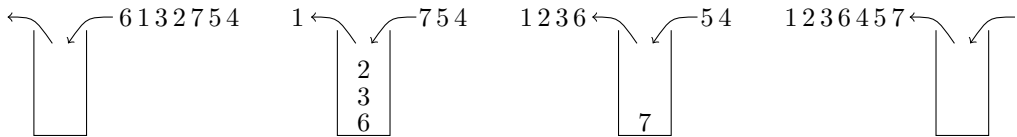
\begin{figure}[ht]
\begin{center}
\begin{tikzpicture}\begin{scope}[scale=0.35]
\draw (14,4) -- (14,0) -- (16,0)-- (16,4);
\draw[->] (17,4.5) .. controls (16,4.5) .. (15.3,3.5);
\draw[->] (14.7,3.5) .. controls (14,4.5) .. (13,4.5);
\draw (19.5,4.5) node {$6\, 1\, 3\, 2\, 7\, 5\, 4$};
\draw (15,-0.2) node {$~$};
\end{scope}
\end{tikzpicture} \quad \begin{tikzpicture}
\begin{scope}[scale=0.35]
\draw (14,4) -- (14,0) -- (16,0)-- (16,4);
\draw[->] (17,4.5) .. controls (16,4.5) .. (15.3,3.5);
\draw[->] (14.7,3.5) .. controls (14,4.5) .. (13,4.5);
\draw (15,0.5) node {$6$};
\draw (12.5,4.5) node {$1$};
\draw (15,1.5) node {$3$};
\draw (15,2.5) node {$2$};
\draw (18,4.5) node {$7\, 5\, 4$};
\draw (15,-0.2) node {$~$};
\end{scope}
\end{tikzpicture} \quad \begin{tikzpicture}
\begin{scope}[scale=0.35]
\draw (14,4) -- (14,0) -- (16,0)-- (16,4);
\draw[->] (17,4.5) .. controls (16,4.5) .. (15.3,3.5);
\draw[->] (14.7,3.5) .. controls (14,4.5) .. (13,4.5);
\draw (15,0.5) node {$7$};
\draw (11.5,4.5) node {$1\, 2\, 3\, 6$};
\draw (17.7,4.5) node {$5\, 4$};
\draw (15,-0.2) node {$~$};
\end{scope}
\end{tikzpicture} \quad \begin{tikzpicture}
\begin{scope}[scale=0.35]
\draw (14,4) -- (14,0) -- (16,0)-- (16,4);
\draw[->] (17,4.5) .. controls (16,4.5) .. (15.3,3.5);
\draw[->] (14.7,3.5) .. controls (14,4.5) .. (13,4.5);
\draw (10.5,4.5) node {$1\, 2\, 3\, 6\, 4\, 5 \, 7$};
\draw (15,-0.2) node {$~$};
\end{scope}
\end{tikzpicture}
\end{center}
\caption{Some steps of the stack sorting procedure applied to $\pi = 6\, 1\, 3\, 2\, 7\, 5\, 4$. \label{FIG:Stack}}
\end{figure}

Knuth~\cite{Knuth:Art} also characterized the permutations that may be sorted by a stack. A first way to present this characterization is as follows: if a permutation $\pi$ of $[n]$ is written in one line notation as $\alpha n \beta$, then $\pi$ is sortable if and only if: each of $\alpha$ and $\beta$ is sortable (thought of as permutations of the values they contain); and each value in $\alpha$ is less than any value in $\beta$ (or simply $\alpha < \beta$). The first condition is clearly necessary -- the second condition is also necessary as, when $n$ is the first element remaining to be added to the stack, the entire stack must be emptied to have any hope of success, otherwise $n$ will precede some other element in the output, and the output will not be sorted. That the conditions are sufficient is also clear -- the requisite operations are: sort and output $\alpha$; add $n$ to the stack; sort and output $\beta$; remove $n$ from the stack. 

Another classical characterization of stack sortable permutations is simply derived from the description above. Stack sortable permutations are those that may not contain subwords (not necessarily consecutive) of the form $bca$ where $a < b < c$. Such permutations are said to \emph{avoid the pattern} $231$, and the collection of all such is denoted $\Av(231)$. This result opened the way to the study of pattern avoidance in permutations. A permutation $\pi = \pi(1) \pi(2) \ldots \pi(k)$ is a \emph{pattern} of a permutation $\sigma = \sigma(1) \sigma(2) \ldots \sigma(n)$ when there exist $1 \leq i_1 < i_2 < \ldots < i_k \leq n$ such that $\pi$ is order isomorphic to $\sigma(i_1)\ldots \sigma(i_k)$. If $\pi$ is not a pattern of $\sigma$ then we say that $\sigma$ \emph{avoids} $\pi$. We denote by $\Av(B)$ the set of all permutations that avoid simultaneously all the patterns $\pi \in B$. 

The simple behavior explained by Knuth prompted many other investigations of stack sorting and its variations beginning with works by Pratt and Tarjan \cite{Pratt:Computing, Tarjan:Sorting}. 
In the 1990s, West \cite{West:Twice} described by the avoidance of generalized patterns the permutations that can be sorted using $\S \circ \S$, and Zeilberger \cite{Zeilberger:West} subsequently confirmed a conjecture of West's on their enumeration.  
A characterization of permutation sorted by $\S \circ \S \circ \S$ has recently been given by Claesson and \'Ulfarsson~\cite{Ulf12,ClUl12}. It involves even more general patterns, but does not allow the enumeration of permutations sorted by $\S^3$. Going further, the characterization and enumeration of permutations sorted by $\S^k$ for $k\geq 4$ are open questions. 

\bigskip

Instead of a procedure, stack sorting can equivalently be considered as an operator, $\S$, applied to permutations and defined recursively as: $\S(\alpha n \beta) = \S(\alpha) \S(\beta) n$. In this work, we shall take this point of view. 
We also adopt the viewpoint throughout that any sequence of distinct values can be interpreted as a permutation and ``$n$'' always denotes the maximum element of such a sequence. 

Bousquet-M\'{e}lou \cite{Bousquet:Sorted} considered the operator $\S$ and characterized, given $\pi$, the set $\S^{-1}(\pi)$. We shall be extending her results, and will discuss them in more detail later. As explained in Section~\ref{SEC:Mireille}, central to her analysis is the observation that the operator $\S$ can be described in the following terms: given a permutation $\pi$ form the unique decreasing binary tree $\InTree(\pi)$ whose in-order reading is $\pi$, then $\S(\pi)$ is the post-order reading of this tree.

A second operator on permutations is the reversal operator, that reads permutations from right to left -- it can also be modeled by using a stack where we are obliged to input the entire permutation to the stack before performing any output. The reversal operator, $\R$ is one of eight natural symmetries on the collection of permutations. Bouvel and Guibert \cite{Bouvel:Enumeration} considered the enumeration of permutations sorted by $\S \circ \R \circ \S$ as well as the sets defined similarly with other symmetries in place of $\R$. In experimental investigations aimed at providing extensions to their results they noticed an interesting phenomenon that can be expressed as: 

\begin{conjecture}
Take $\A$ to be any composition of the operators $\S$ and $\R$; then the number of permutations sorted by $\S \circ \A$ and by $\S \circ \R \circ \A$ is the same. Moreover, many permutation statistics are equidistributed across these two sets.
\label{CONJ:BouvelGuibert}
\end{conjecture}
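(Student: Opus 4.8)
The plan is to first reduce Conjecture~\ref{CONJ:BouvelGuibert} to a statement about the two classes $\Av(231)$ and $\Av(132)$. By Knuth's theorem, $\S(\tau)$ is the identity precisely when $\tau\in\Av(231)$, so a permutation $\sigma$ is sorted by $\S\circ\A$ if and only if $\A(\sigma)\in\Av(231)$, and it is sorted by $\S\circ\R\circ\A$ if and only if $\R(\A(\sigma))\in\Av(231)$, i.e. if and only if $\A(\sigma)\in\Av(132)$ since reversing the pattern $231$ gives $132$. Thus it suffices to construct, for every word $\A$ in the letters $\S$ and $\R$, a size-preserving bijection $\Phi_\A\colon\A^{-1}(\Av(231))\to\A^{-1}(\Av(132))$ and to arrange that it transports a fixed list $\Sigma$ of permutation statistics; the ``moreover'' part of the conjecture then follows by bookkeeping along the construction.

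The core ingredient is one carefully designed bijection $\phi\colon\Av(231)\to\Av(132)$, which I would define recursively through the decomposition $\pi=\alpha n\beta$ (equivalently, through the decreasing binary tree $\InTree(\pi)$ described in Section~\ref{SEC:Mireille}). For $\pi\in\Av(231)$ this decomposition forces $\alpha<\beta$ with $\alpha,\beta\in\Av(231)$, while a permutation of $\Av(132)$ decomposes as $\alpha' n\beta'$ with $\alpha'>\beta'$ and $\alpha',\beta'\in\Av(132)$; one then builds $\phi(\pi)$ from $\phi(\alpha)$ and $\phi(\beta)$ by placing one of them on the top values and the other on the bottom values so that the inequality switches direction. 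The work here is to check that $\phi$ is a bijection and to track, through the recursion, as many classical statistics as possible (descents, ascents, left-to-right extrema, valleys/peaks, first and last entries, and so on). The delicate statistic is the \emph{fertility} $t(\pi)=|\S^{-1}(\pi)|$: appealing to Bousquet-M\'elou's description of $\S^{-1}(\pi)$ \cite{Bousquet:Sorted}, I would show that the multiset of $\Sigma$-vectors carried by $\S^{-1}(\pi)$ depends only on the $\Sigma$-vector of $\pi$; in particular $\S^{-1}(\pi)$ and $\S^{-1}(\phi(\pi))$ are equinumerous and carry the same statistics, so a $\Sigma$-respecting fibre bijection $g_\pi\colon\S^{-1}(\pi)\to\S^{-1}(\phi(\pi))$ exists.

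Granting $\phi$ and the $g_\pi$'s, I would build $\Phi_\A$ by induction on the length of $\A$, writing $\A=\A'\circ X$ with $X\in\{\S,\R\}$, so that $\A^{-1}=X^{-1}\circ\A'^{-1}$. If $X=\R$, then $\A^{-1}(\Av(231))=\R\bigl(\A'^{-1}(\Av(231))\bigr)$ and likewise for $132$, and one takes $\Phi_\A=\R\circ\Phi_{\A'}\circ\R$. If $X=\S$, then $\A^{-1}(\Av(231))=\bigsqcup_{\tau}\S^{-1}(\tau)$ with $\tau$ ranging over $\A'^{-1}(\Av(231))$, similarly over $\A'^{-1}(\Av(132))$, and one sets $\Phi_\A=\bigsqcup_{\tau}g_\tau$, gluing the fibre maps along $\Phi_{\A'}$; legitimacy of the gluing uses that $\Phi_{\A'}$ matches permutations of equal fertility. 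To keep the induction running one must verify that $\Phi_\A$ again preserves every statistic in $\Sigma$: for the $\S$-step this is exactly why the $g_\tau$ were required to respect $\Sigma$, and for the $\R$-step one observes that reversal sends each statistic in $\Sigma$ to another member of $\Sigma$, so conjugating by $\R$ merely permutes the statistics in $\Sigma$ rather than destroying preservation (concretely, if $\Phi_{\A'}$ preserves $s$ and $s\circ\R$, then $\R\circ\Phi_{\A'}\circ\R$ preserves $s$).

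The step I expect to be the main obstacle is precisely the interaction between reversal and fertility: $t(\pi)=|\S^{-1}(\pi)|$ is visibly not invariant under $\R$ (for instance $t(12\cdots n)=C_n$ while $t(n\cdots21)=0$, since the image of $\S$ consists of permutations ending in their largest entry), so one cannot simply ask $\phi$ to preserve $t$ and expect this to survive conjugation by $\R$. The remedy is to insist from the outset that $\Sigma$ be \emph{closed under $s\mapsto s\circ\R$} — in particular containing both $t$ and $t\circ\R$ — and to prove that $\phi$ and all the fibre maps $g_\pi$ respect this enlarged list. Setting up this reversal-symmetric bookkeeping so that it is simultaneously compatible with the recursion defining $\phi$, with the combinatorial structure Bousquet-M\'elou attaches to the fibres $\S^{-1}(\pi)$, and with both induction steps, is the technical heart of the argument; everything else should reduce to routine inductions on permutation size.
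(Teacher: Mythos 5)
Your overall architecture coincides with the paper's: the same reduction (sorted by $\S\circ\A$ iff $\A(\sigma)\in\Av(231)$, sorted by $\S\circ\R\circ\A$ iff $\A(\sigma)\in\Av(132)$), the same recursive bijection $\P:\Av(231)\to\Av(132)$ built from the decomposition $\alpha\oplus(1\ominus\beta)$, the same induction that peels off the innermost operator and handles $\R$ by conjugation and $\S$ by gluing fibre bijections coming from Bousquet-M\'elou's description of $\S^{-1}(\tau)$. But there is a genuine gap at the one place you flag as ``the work here'': the existence of the $\Sigma$-respecting fibre maps $g_\tau$. You assert that ``the multiset of $\Sigma$-vectors carried by $\S^{-1}(\pi)$ depends only on the $\Sigma$-vector of $\pi$,'' for some finite, reversal-closed list $\Sigma$ of classical statistics enlarged by the fertility $t$ and $t\circ\R$. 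This is both unproven and very unlikely to be true as stated: the structure of $\S^{-1}(\tau)$ is governed by the full \emph{shape} of the canonical tree $\canT_\tau$ (Propositions~\ref{PROP:unique_canonical_tree} and~\ref{PROP:all_preimages_from_canonical_tree}), and no fixed list of classical statistics determines that shape. Worse, the condition is self-referential -- the $g_\tau$ must preserve $t$ and $t\circ\R$ \emph{of the elements of the fibre} so that a later $\S$-step can again be glued, which forces you to control the fibres of the fibres, and so on; a list-of-statistics invariant does not close this regress.

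The missing idea is to make $\Phi_\A$ a pure \emph{value relabelling}: writing $\P(\pi)=\lambda_\pi\circ\pi$, one sets $\Phi_\A(\theta)=\lambda_{\A(\theta)}\circ\theta$. Two elementary facts then do all the work: if $x<y$ have no larger value between them in $\pi$, the same holds in any $\A(\pi)$ (Observation~\ref{OBS:SR}), and $\lambda_\pi$ preserves the order of any such pair (Observation~\ref{OBS:P}). These show that relabelling $\canT_\tau$ by $\lambda_\pi$ yields a decreasing, canonical tree, hence exactly $\canT_{\tau'}$ for $\tau'=\lambda_\pi\circ\tau$; so the fibres $\S^{-1}(\tau)$ and $\S^{-1}(\tau')$ correspond via the \emph{same} relabelling, which automatically commutes with $\R$, preserves every tree-shape statistic, and iterates through arbitrarily many further $\S$- and $\R$-steps. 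The strong invariant carried through the induction is thus $\InTree(\Phi_\A(\theta))=\lambda_\pi(\InTree(\theta))$ (an equality of labelled trees), not equality of a finite statistic vector; replacing your $\Sigma$-bookkeeping by this invariant is what turns your outline into a proof.
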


It is the primary purpose of this article to prove that this is indeed the case, and the proof of Conjecture~\ref{CONJ:BouvelGuibert} will be given in Section~\ref{SEC:main}.

With the characterization of stack sortable permutations as $\Av(231)$, proving Conjecture~\ref{CONJ:BouvelGuibert} is equivalent to showing that there is a size-preserving bijection between the elements of $\Av(231)$ belonging to the image of $\A$, and the elements of $\Av(231)$ belonging to the image of $\R \circ \A$, with the additional condition that the bijection preserves the number of preimages under $\A$ (resp. $\R \circ \A$). Equivalently, we can replace this latter set by the elements of $\Av(132)$ belonging to the image of $\A$, since the self-inverse operator $\R$ immediately provides a bijection between $\Av(231)$ and $\Av(132)$. 

In establishing this result we make use of a very natural bijection -- denoted $\P$ -- between $\Av(231)$ and $\Av(132)$, which however rarely appears in the literature. As noticed in~\cite{DDJSS12}, this bijection preserves many permutation statistics, and we add more statistics to the list in Section~\ref{SEC:P}. 

Finally, in Section~\ref{SEC:Wilf}, we show how this bijection $\P$ can be used to derive Wilf-equivalences between some pairs of permutation classes of the form $\Av(231,\tau)$, for $\tau$ avoiding $231$. Namely, for every $n$, we describe $\lceil n/2 \rceil$ pairs of patterns $\tau$ and $\tau'$ such that $\R \circ \P$ is a bijection between $\Av(231,\tau)$ and $\Av(231,\tau')$. 

\section{Preimages of permutations in the image of $\S$}\label{SEC:Mireille}

As noted earlier, the description of the elements of $\S^{-1}(\pi)$ for $\pi$ in the image of $\S$ was carried out in \cite{Bousquet:Sorted}. This description is central to our work, so we review it here. 

\subsection{Some basics about binary trees and permutations}

A binary tree (whose internal vertices are labeled by integers) is \emph{decreasing} when $a < b$ for any child labeled by $a$ of a vertex labeled by $b$. 

The \emph{post-order reading} $\post$ is recursively defined by associating the empty word $\varepsilon$ to the empty tree $T_{\varepsilon}$, and the word $\post(T_\ell) \cdot \post(T_r) \cdot n$ to any non empty binary tree 
\raisebox{-1.5ex}{\begin{tikzpicture}
\begin{scope}
\tikzstyle{every child}=[level distance=3mm]
\tikzstyle{every node}=[inner sep=0,minimum size=0mm] 
\tikzstyle{level 1}=[sibling distance=10mm]
\node{\footnotesize $n$} child {node {$T_\ell$}} child { node {$T_r$}};
\end{scope}
\end{tikzpicture}
}. 

Similarly, the \emph{in-order reading} $\inreading$ is recursively defined by associating the empty word $\varepsilon$ to the empty tree $T_{\varepsilon}$, and the word $\inreading(T_\ell) \cdot n \cdot \inreading(T_r)$ to any non empty binary tree 
\raisebox{-1.5ex}{\begin{tikzpicture}
\begin{scope}
\tikzstyle{every child}=[level distance=3mm]
\tikzstyle{every node}=[inner sep=0,minimum size=0mm] 
\tikzstyle{level 1}=[sibling distance=10mm]
\node{\footnotesize $n$} child {node {$T_\ell$}} child { node {$T_r$}};
\end{scope}
\end{tikzpicture}
}.

\begin{observation} \label{OBS:Tin}
For any permutation $\pi$, there is a unique decreasing binary tree whose in-order reading is $\pi$. We denote it $\InTree(\pi)$. 
\end{observation}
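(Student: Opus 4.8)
The plan is to prove Observation~\ref{OBS:Tin} by a straightforward induction on the size $n$ of the permutation $\pi$, exploiting the recursive structure of decreasing binary trees and of the in-order reading. The base case $n=0$ is immediate: the empty permutation corresponds to the empty tree $T_\varepsilon$, whose in-order reading is $\varepsilon$, and there is no other binary tree of size $0$.

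For the inductive step, suppose the claim holds for all permutations of size less than $n$, and let $\pi = \pi(1)\pi(2)\ldots\pi(n)$ be a permutation of an $n$-element set of distinct values, with maximum element $n$ (using the paper's convention). First I would establish \emph{existence}: write $\pi = \alpha\, n\, \beta$, where $\alpha$ and $\beta$ are the (possibly empty) sequences of values to the left and right of the maximum. Each of $\alpha$ and $\beta$ has fewer than $n$ elements, so by the induction hypothesis there are decreasing binary trees $T_\alpha$ and $T_\beta$ whose in-order readings are $\alpha$ and $\beta$ respectively. Form the tree $T$ with root labeled $n$, left subtree $T_\alpha$ and right subtree $T_\beta$. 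Its in-order reading is $\inreading(T_\alpha)\cdot n\cdot\inreading(T_\beta) = \alpha\, n\, \beta = \pi$, and $T$ is decreasing: the two subtrees are decreasing by hypothesis, and the root $n$, being the maximum, is larger than the labels of its children.

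Next I would establish \emph{uniqueness}. Suppose $T$ is any decreasing binary tree whose in-order reading is $\pi$. Since $T$ is decreasing, the label of its root is the largest value appearing in $T$, hence equals $n$. The in-order reading of $T$ then has the form $\inreading(T_\ell)\cdot n\cdot\inreading(T_r)$, where $T_\ell, T_r$ are the root's subtrees; comparing with $\pi = \alpha\, n\, \beta$ and using that $n$ occurs exactly once, we get $\inreading(T_\ell) = \alpha$ and $\inreading(T_r) = \beta$. Both $T_\ell$ and $T_r$ are decreasing trees of size less than $n$, so by the induction hypothesis $T_\ell = \InTree(\alpha)$ and $T_r = \InTree(\beta)$ are uniquely determined, and therefore so is $T$. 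This completes the induction.

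There is no real obstacle here; the only point requiring a little care is the observation that in a decreasing tree the root carries the global maximum, which is what allows the position of $n$ in the in-order reading to pin down the split into left and right subtrees — and hence what makes the recursion well-founded. One should also note explicitly that the recursive definition of $\inreading$ places each vertex's label between the readings of its two subtrees, so that the single occurrence of $n$ in $\pi$ genuinely corresponds to the root and not to some other vertex; this is exactly why the decomposition $\pi = \alpha\, n\, \beta$ matches the tree decomposition unambiguously.
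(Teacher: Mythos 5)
Your proof is correct and takes essentially the same approach as the paper, which simply exhibits the recursive description $\InTree(\alpha\, n\, \beta)$ with root $n = \max(\alpha\, n\, \beta)$ and subtrees $\InTree(\alpha)$, $\InTree(\beta)$ as justification. Your explicit induction, with the key point that the root of a decreasing tree must carry the global maximum so that the occurrence of $n$ pins down the split, is exactly the argument the paper leaves implicit.
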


Namely, $\InTree(\pi)$ is recursively described by $\InTree(\varepsilon) = T_{\varepsilon}$ and 
\[
\InTree(\alpha n \beta) = \raisebox{-1.5ex}{\begin{tikzpicture}
\begin{scope}
\tikzstyle{every child}=[level distance=1.5mm]
\tikzstyle{every node}=[inner sep=0,minimum size=0mm] 
\tikzstyle{level 1}=[sibling distance=12mm]
\node {\footnotesize $n$}
  child[child anchor=north] {node[draw,shape=isosceles triangle, shape border rotate=90,anchor=north, inner sep=0.1,isosceles triangle apex angle=60] {\scriptsize $\InTree(\alpha)$}}
  child[child anchor=north] {node[draw,shape=isosceles triangle, shape border rotate=90,anchor=north, inner sep=0.1,isosceles triangle apex angle=60] {\scriptsize $\InTree(\beta)$}};
\end{scope}
\end{tikzpicture}
} \text{ where } n = \max (\alpha n \beta)\text{.}
\]

\subsection{Trees and preimages of permutations in the image of $\S$}

As observed in~\cite[Proposition 2.1]{Bousquet:Sorted}, it may be deduced from the recursive definitions of $\InTree$, $\post$ and $\S$ given above (recall that $\S(\alpha n \beta) = \S(\alpha) \S(\beta) n$) that $\S$ converts in-order reading of decreasing binary trees to post-order reading: 

\begin{observation}
For any permutation $\pi$, the post-order reading of the in-order tree of $\pi$ is the result of applying the stack sorting operator to $\pi$, \emph{i.e.} $\post(\InTree(\pi))=\S(\pi)$.
\label{OBS:stack_sorting_on_trees}
\end{observation}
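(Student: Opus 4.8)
The plan is to prove the identity $\post(\InTree(\pi)) = \S(\pi)$ by structural induction on the size of $\pi$, exploiting the fact that all three operators involved --- $\InTree$, $\post$ and $\S$ --- are governed by the same recursive decomposition of a permutation around its maximal entry.

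For the base case, take $\pi = \varepsilon$. Then $\InTree(\varepsilon) = T_\varepsilon$ by definition, $\post(T_\varepsilon) = \varepsilon$ by the recursive definition of the post-order reading, and $\S(\varepsilon) = \varepsilon$; so the two sides agree.

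For the inductive step, I would write $\pi = \alpha n \beta$ with $n = \max(\alpha n \beta)$, where $\alpha$ and $\beta$ are (possibly empty) sequences of distinct values, each strictly shorter than $\pi$ and each interpreted as a permutation of the set of values it contains, following the convention adopted in Section~\ref{SEC:Mireille}. By the recursive description of $\InTree$ recalled after Observation~\ref{OBS:Tin}, the tree $\InTree(\pi)$ has root labelled $n$, left subtree $\InTree(\alpha)$ and right subtree $\InTree(\beta)$. Reading this tree in post-order and applying the recursive definition of $\post$ gives $\post(\InTree(\pi)) = \post(\InTree(\alpha)) \cdot \post(\InTree(\beta)) \cdot n$. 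By the induction hypothesis applied to $\alpha$ and to $\beta$, this equals $\S(\alpha)\,\S(\beta)\,n$, which is exactly $\S(\alpha n \beta) = \S(\pi)$ by the recursive rule $\S(\alpha n \beta) = \S(\alpha)\S(\beta)n$. This closes the induction.

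There is essentially no obstacle here: the statement holds precisely because the three recursions all split a permutation at its maximum in mutually compatible ways. The single point deserving a word of care is that $\alpha$ and $\beta$ need not be permutations of an initial segment $[k]$; but since $\InTree$, $\post$ and $\S$ depend only on the relative order of the entries (equivalently, each is well defined directly on any sequence of distinct values), the induction hypothesis applies verbatim. One could alternatively phrase the whole argument as a direct comparison of the two recursive rewriting systems defining $\post \circ \InTree$ and $\S$, but the induction on size is the cleanest presentation.
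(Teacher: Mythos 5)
Your proof is correct and is exactly the argument the paper has in mind: the paper simply asserts that the identity "may be deduced from the recursive definitions of $\InTree$, $\post$ and $\S$," and your induction on size, splitting $\pi = \alpha n \beta$ at its maximum and matching the three recursions, is the standard way to make that deduction explicit. Nothing further is needed.
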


Notice furthermore for future use that, because every decreasing binary tree is the in-order tree of some permutation, Observation~\ref{OBS:stack_sorting_on_trees} implies that:

\begin{corollary}
The post-order reading of any decreasing binary tree is in the image of $\S$.
\label{COR:post_in_S}
\end{corollary}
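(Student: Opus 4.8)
The plan is to read a suitable preimage directly off the tree. Given a decreasing binary tree $T$ (with pairwise distinct integer labels on its internal vertices), set $\pi := \inreading(T)$, the in-order reading of $T$. Since the labels of $T$ are distinct, $\pi$ is a sequence of distinct values, hence a permutation in the sense adopted throughout the paper, and $T$ is by construction a decreasing binary tree whose in-order reading is $\pi$.

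Next I would identify $\InTree(\pi)$ with $T$. By the uniqueness part of Observation~\ref{OBS:Tin}, there is exactly one decreasing binary tree whose in-order reading equals $\pi$, and $T$ is such a tree; therefore $\InTree(\pi) = T$. (If one prefers an explicit argument rather than invoking uniqueness, the recursion defining $\InTree(\alpha n \beta)$ and the decomposition of $T$ at its root — whose label is the maximum, by the decreasing property — match step by step; but this merely re-proves the uniqueness already granted.)

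Finally, apply Observation~\ref{OBS:stack_sorting_on_trees} to $\pi$: we get $\post(T) = \post(\InTree(\pi)) = \S(\pi)$. Hence $\post(T)$ lies in the image of $\S$, which is the claim.

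I do not expect any real obstacle here: the only point requiring a word of care is that Observation~\ref{OBS:Tin} is stated as ``every permutation has a unique decreasing in-order tree'', whereas what is used is the complementary observation that every decreasing binary tree is recovered as $\InTree$ of its own in-order reading — and this is an immediate consequence of that same uniqueness.
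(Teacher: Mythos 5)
Your proof is correct and is exactly the argument the paper intends: it justifies the remark that ``every decreasing binary tree is the in-order tree of some permutation'' by taking $\pi = \inreading(T)$ and invoking the uniqueness in Observation~\ref{OBS:Tin}, then applies Observation~\ref{OBS:stack_sorting_on_trees}. No issues.
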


From Observation~\ref{OBS:stack_sorting_on_trees}, for any permutation $\tau$ in the image of $\S$, describing $\S^{-1}(\tau)$ is equivalent to describing the decreasing binary trees, $T$, with post-order reading $\tau$. As~\cite{Bousquet:Sorted} shows, this set of trees may be characterized by a single tree associated with $\tau$. 

\begin{definition}
A decreasing binary tree is \emph{canonical} if it has the following property: any vertex, $z$, that has a left child, $x$, also has a right child, and the leftmost value $y$ in the subtree of the right child of $z$ is less than $x$. 
\end{definition}

\begin{proposition}[\cite{Bousquet:Sorted}, Proposition 2.6]
For any $\tau$ in the image of $\S$, there is a unique canonical tree, denoted $\canT_{\tau}$, with $\post(\canT_\tau) = \tau$. 
\label{PROP:unique_canonical_tree}
\end{proposition}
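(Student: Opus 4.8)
The plan is to prove existence and uniqueness separately, both by induction on the size of $\tau$, exploiting the recursive structure $\S(\alpha n \beta) = \S(\alpha)\S(\beta)n$ together with the shape of $\InTree$.

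For \textbf{existence}, suppose $\tau$ is in the image of $\S$, say $\tau = \S(\pi)$, and consider $T = \InTree(\pi)$, which by Observation~\ref{OBS:stack_sorting_on_trees} satisfies $\post(T) = \tau$. If $T$ is already canonical we are done; otherwise I would show how to perform a local ``rotation'' that repairs a violation of the canonical property without changing the post-order reading, and argue this process terminates. Concretely, a violation at a vertex $z$ means either $z$ has a left child $x$ but no right child, or it has both children but the leftmost value $y$ of the right subtree satisfies $y > x$. In the first case, one moves the left subtree of $z$ to become (part of) its right subtree; in the second, one performs a suitable rotation bringing a smaller value into the leftmost position of the right subtree. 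The key point to check is that each such move preserves both the decreasing property and the post-order reading (post-order reads left subtree, then right subtree, then root, so any rearrangement that keeps the multiset of ``left-then-right-then-root'' order intact is fine), and that a carefully chosen potential function (e.g.\ counting left edges, or summing depths of left children) strictly decreases, guaranteeing termination at a canonical tree. Alternatively, and perhaps more cleanly, one gives a direct recursive construction: writing $\tau$ in terms of where its maximum $n$ sits and using the fact that in any tree with post-order reading $\tau$ the root is the last letter $n$, one peels off $n$, splits $\tau = \tau_\ell \tau_r n$ according to the (unique, once canonicity is imposed) split point, recursively builds canonical trees for $\tau_\ell$ and $\tau_r$, and checks the canonical condition at the root.

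For \textbf{uniqueness}, I would argue that the canonical condition forces the split point $\tau = \tau_\ell \tau_r n$ to be determined by $\tau$ alone. The root must be $n$ (the last symbol), and $\post$ of a tree is $\post(T_\ell)\post(T_r)n$, so any tree with $\post = \tau$ corresponds to a choice of prefix $\tau_\ell$ of $\tau_1\cdots\tau_{|\tau|-1}$ with $\tau_r$ the complementary suffix, together with trees realizing each. The canonical property pins down this prefix: the leftmost value of the right subtree is the first letter of $\tau_r$, and the requirement that $z$ (the root) has a right child whose leftmost value is less than the left child's value (which is the last letter of $\tau_\ell$, i.e.\ the root of $T_\ell$) translates into a concrete inequality between consecutive-in-$\tau$ quantities; combined with the analogous constraints propagated recursively, and the decreasing constraint, this leaves exactly one admissible split. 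Formally: define the split index to be, say, the largest $k$ such that $\tau_{k}$ exceeds everything in a certain initial segment, or more precisely use that in a decreasing tree every value in $T_\ell$ together with every value in $T_r$ is less than $n$ and the first letter of $\tau_r$ must be the minimum of a suitable set; then invoke the inductive hypothesis on $\tau_\ell$ and $\tau_r$ (each of which is in the image of $\S$ by Corollary~\ref{COR:post_in_S}, being the post-order reading of a subtree of a decreasing binary tree) to conclude the trees on each side are unique.

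The main obstacle I anticipate is making the ``the canonical condition forces a unique split of $\tau$'' step precise — identifying exactly which statistic of $\tau$ determines the split index and verifying it does so both necessarily (any canonical tree must split there) and sufficiently (splitting there and recursing does yield a canonical tree, with the root condition genuinely satisfied). This is essentially the content of Proposition~2.6 of~\cite{Bousquet:Sorted}, and the cleanest route may well be to reproduce that argument: first establish a lemma characterizing, for $\tau$ in the image of $\S$, the position of the ``leftmost descent'' or the relevant left-to-right structure, then show canonicity is equivalent to always taking that split. The decreasing-tree constraint and the image-of-$\S$ hypothesis must be used in tandem here — neither alone suffices — and keeping track of which values may legally appear where is the delicate bookkeeping. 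Once the split is shown unique, both existence (build it) and uniqueness (it is forced) follow by a routine induction.
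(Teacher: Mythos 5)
The paper offers no proof of this proposition: it is imported verbatim from Bousquet-M\'elou (Proposition~2.6 of \cite{Bousquet:Sorted}), so there is no internal argument to measure yours against. Judged on its own terms, what you have written is a plan rather than a proof, and it has a genuine gap exactly where you flag one: you never identify the statistic of $\tau$ that determines the split $\tau=\tau_\ell\,\tau_r\,n$, and without that neither the ``direct recursive construction'' for existence nor the uniqueness argument gets off the ground. Saying that ``the cleanest route may well be to reproduce that argument'' is an acknowledgement that the central step is missing, not a proof of it.

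Worse, the one concrete handle you propose for that step is false. You assert that ``the leftmost value of the right subtree is the first letter of $\tau_r$.'' The leftmost value in the sense of the canonical condition is the vertex reached from the root of $T_r$ by following left children only (equivalently, the first letter of the \emph{in-order} reading of $T_r$), whereas the first letter of $\tau_r=\post(T_r)$ is the first vertex completed by a depth-first traversal, which falls through to a right child whenever a left child is absent; the two differ whenever the bottom of the leftmost branch is not a leaf. The paper's own example exhibits this: in $\canT_\tau$ for $\tau=5\,1\,8\,2\,3\,6\,4\,7\,9$, the right subtree of the root has post-order reading $2\,3\,6\,4\,7$, whose first letter is $2$, while its leftmost value is $6$. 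So the ``concrete inequality between consecutive-in-$\tau$ quantities'' you hope to read off is not the canonical condition, and the uniqueness argument built on it collapses. (Your repair-by-local-moves route for existence is salvageable: the inverse of the $(\star)$ operations of Proposition~\ref{PROP:all_preimages_from_canonical_tree} does preserve both the post-order reading and the decreasing property, and a lexicographic potential such as (number of left edges, minus total depth) forces termination; but none of this is verified in your text, and it does nothing for uniqueness, which is where the real content of the proposition lies.)
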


In fact, \cite{Bousquet:Sorted} also shows that the permutation $\pi$ obtained from the in-order reading of $\canT_{\tau}$ is the element of $\S^{-1}(\tau)$ having the greatest number of inversions. Moreover, \cite{Bousquet:Sorted} shows that all permutations in $\S^{-1}(\tau)$ (or equivalently, their in-order trees) may be recovered from $\canT_{\tau}$:

\begin{proposition}
Any decreasing binary tree whose post-order reading is $\tau$ (and only such trees) can be obtained from $\canT_\tau$ by a sequence of operations of the following type:  take a vertex $z$ with no left child, and one of its descendants $y$ on the leftmost branch of its right subtree; remove the subtree rooted at $y$ and make it the left subtree of $z$. 
\label{PROP:all_preimages_from_canonical_tree}
\end{proposition}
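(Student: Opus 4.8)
The plan is to characterize the decreasing binary trees $T$ with $\post(T)=\tau$ as exactly the trees reachable from $\canT_\tau$ by the stated ``leftward-move'' operations, proving both directions. First I would check that the post-order reading is invariant under one elementary operation: if $z$ has no left child and $y$ is a vertex on the leftmost branch of the right subtree of $z$, then detaching the subtree rooted at $y$ and reattaching it as the left subtree of $z$ does not change the post-order reading. This is a local computation using $\post(T_\ell)\cdot\post(T_r)\cdot n$: since $y$ lies on the leftmost branch of $z$'s right subtree, the subtree at $y$ occupies an initial segment of $\post$ of that right subtree, and moving it to be the (previously empty) left subtree of $z$ leaves the concatenation $\post(\text{left})\cdot\post(\text{right})\cdot(\text{label of }z)$ unchanged; one must also verify the resulting tree is still decreasing, which holds because $y$ was below $z$ in the old tree and all labels in its subtree are below its own. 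By induction this shows every tree obtained from $\canT_\tau$ by a sequence of such moves has post-order reading $\tau$ — the ``only such trees'' inclusion in the easy direction.

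Next I would prove the converse: every decreasing binary tree $T$ with $\post(T)=\tau$ arises this way. The natural approach is a measure-decreasing argument. Define the ``reverse'' operation to the one in the statement: take a vertex $z$ with a left child $x$ such that $z$ has no right child, or more generally such that the subtree at $x$ can be moved to hang at the bottom-left of $z$'s right subtree, and perform that move; this is exactly the inverse of the operation described, and it also preserves $\post$. I would show that any non-canonical $T$ admits such a reverse move (using the failure of the canonical property: there is a vertex $z$ with a left child $x$ but either no right child, or with the leftmost value $y$ of its right subtree satisfying $y>x$ — in the latter case $x<y$ and $x$ is smaller than everything on the leftmost branch down to $y$... actually one should locate the correct spot on the leftmost branch where $x$ can be inserted so the tree stays decreasing). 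Then I would argue that iterating reverse moves terminates — e.g. because each reverse move strictly decreases the number of inversions of the in-order reading, and $\canT_\tau$ is the unique preimage of maximal inversion number (Proposition~\ref{PROP:unique_canonical_tree} together with the remark following it, which I may assume), so the process can only terminate at $\canT_\tau$. Hence $T$ is obtained from $\canT_\tau$ by running those reverse moves backwards, i.e. by a sequence of the operations in the statement.

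The main obstacle I anticipate is the combinatorial bookkeeping in the converse direction: precisely identifying, in a non-canonical tree, a vertex $z$ and a position on the leftmost branch of its right subtree such that reattaching the left subtree of $z$ there yields a valid decreasing tree with the same post-order reading, and simultaneously giving a monovariant (number of inversions of the in-order reading, or total depth of left children, or similar) that strictly decreases under every such reverse move so that termination at $\canT_\tau$ is forced. One must be careful that the reverse move is genuinely the inverse of the forward move (so that the forward sequence recovers $T$ exactly), and that the chosen monovariant is compatible with both the existence of a reverse move whenever $T\neq\canT_\tau$ and with strict decrease; reconciling these two requirements is the delicate point. Once the monovariant and the local move are pinned down, the rest is the routine induction sketched above, invoking Observation~\ref{OBS:stack_sorting_on_trees} and Proposition~\ref{PROP:unique_canonical_tree}.
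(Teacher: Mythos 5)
The paper does not actually prove this proposition; it is quoted from Bousquet-M\'elou \cite{Bousquet:Sorted} (cf.\ her Propositions~2.6--2.7), so there is no in-paper argument to compare against. On its own merits your plan is sound and essentially complete. The forward direction is exactly as you say: the subtree rooted at $y$ contributes a prefix of the post-order word of $z$'s right subtree, so the move only shuffles an empty left factor, and decreasingness is preserved since $y$ was a descendant of $z$. For the converse, the two points you flag as delicate both resolve cleanly. First, the ``correct spot'' for the reverse move is always the very bottom of the leftmost branch: if canonicity fails at $z$ with left child $x$ and the right subtree has leftmost value $y$, then failure means precisely $y>x$, so hanging the subtree of $x$ as the left child of $y$ keeps the tree decreasing (and if $z$ has no right child, the subtree of $x$ simply becomes the right subtree); in both cases the post-order word is unchanged and the move is the exact inverse of one forward move. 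Second, your monovariant works but you have its direction backwards: the forward move transforms the in-order factor $z\cdot\inreading(T_y)$ into $\inreading(T_y)\cdot z$ and thus removes $|T_y|$ inversions, so the reverse moves strictly \emph{increase} the inversion count, consistent with the remark you cite that $\inreading(\canT_\tau)$ maximizes inversions over $\S^{-1}(\tau)$. This slip is harmless for termination (any bounded strictly monotone quantity suffices), and the endpoint is identified not by the monovariant but by the fact that reverse moves are available exactly at non-canonical vertices, so the process can only halt at a canonical tree, which is $\canT_\tau$ by Proposition~\ref{PROP:unique_canonical_tree}.
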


As shown in \cite[Proposition 2.7]{Bousquet:Sorted}, it follows in particular that $|\S^{-1}(\tau)|$ depends only on the structure of the tree $\canT_{\tau}$ and not on its labeling.

\begin{example}
The canonical tree associated with $\tau = 5 \, 1 \, 8 \, 2 \, 3 \, 6 \, 4 \, 7 \, 9$ is $T_{\tau} =$ \raisebox{-3.1ex}{\begin{tikzpicture}
\begin{scope}
\tikzstyle{every child}=[level distance=2.5mm]
\tikzstyle{every node}=[inner sep=0,minimum size=0mm] 
\tikzstyle{level 1}=[sibling distance=15mm]
\tikzstyle{level 2}=[sibling distance=8mm]
\tikzstyle{level 3}=[sibling distance=8mm]
\tikzstyle{level 4}=[sibling distance=8mm]
\node {\footnotesize $9$}
  child {node {\footnotesize $8$}
    child {node {\footnotesize $5$}
    }
    child {node {\footnotesize $1$}
    }
  }
  child {
    node {\footnotesize $7$}
      child {
        node {\footnotesize $6$}
          child [missing]
          child {
            node {\footnotesize $3$}
            child [missing]
            child {
              node {\footnotesize $2$}
            }
          }
      }
      child {
          node {\footnotesize $4$}
      }
   };
\end{scope}
\end{tikzpicture}}. Its in-order reading, $\pi = 5 \, 8 \, 1 \, 9 \, 6 \, 3 \, 2 \, 7 \, 4$ gives the permutation with the largest number of inversions subject to $\S(\pi) = \tau$. The four other decreasing binary trees with the same post-order reading are shown in Figure~\ref{FIG:example_canonical_tree}. Thus $|\S^{-1}(\tau)| = 5$. If the labels $8$ and $7$, and $5$ and $4$, were exchanged in the original tree, corresponding to $\tau' = 4 \, 1 \, 7 \, 2 \, 3 \, 6 \, 5 \, 8 \, 9$ then, because the tree is still canonical, the method for constructing permutations in $\S^{-1}(\tau')$ is still the same, and in particular $|\S^{-1}(\tau')| = |\S^{-1}(\tau)|$.
\end{example}

\begin{figure}[ht]
\begin{center}
\raisebox{1.8ex}{\begin{tikzpicture}
\begin{scope}
\tikzstyle{every child}=[level distance=2.5mm]
\tikzstyle{every node}=[inner sep=0,minimum size=0mm] 
\tikzstyle{level 1}=[sibling distance=15mm]
\tikzstyle{level 2}=[sibling distance=8mm]
\tikzstyle{level 3}=[sibling distance=8mm]
\tikzstyle{level 4}=[sibling distance=8mm]
\node {\footnotesize $9$}
  child {node {\footnotesize $8$}
    child {node {\footnotesize $5$}
    }
    child {node {\footnotesize $1$}
    }
  }
  child {
    node {\footnotesize $7$}
      child {
        node {\footnotesize $6$}
          child  {
              node {\footnotesize $2$}
            }
          child {
            node {\footnotesize $3$} 
          }
      }
      child {
          node {\footnotesize $4$}
      }
   };
\end{scope}
\end{tikzpicture}
}
\hspace{20pt}
\raisebox{0ex}{\begin{tikzpicture}
\begin{scope}
\tikzstyle{every child}=[level distance=2.5mm]
\tikzstyle{every node}=[inner sep=0,minimum size=0mm] 
\tikzstyle{level 1}=[sibling distance=15mm]
\tikzstyle{level 2}=[sibling distance=8mm]
\tikzstyle{level 3}=[sibling distance=8mm]
\tikzstyle{level 4}=[sibling distance=8mm]
\node {\footnotesize $9$}
  child {node {\footnotesize $8$}
    child {node {\footnotesize $5$}
    }
    child {node {\footnotesize $1$}
    }
  }
  child {
    node {\footnotesize $7$}
      child {
        node {\footnotesize $6$}
          child [missing]
          child {
            node {\footnotesize $3$}
            child {
              node {\footnotesize $2$}
            }
	    child [missing]
          }
      }
      child {
          node {\footnotesize $4$}
      }
   };
\end{scope}
\end{tikzpicture}
}
\hspace{20pt}
\raisebox{0ex}{\begin{tikzpicture}
\begin{scope}
\tikzstyle{every child}=[level distance=2.5mm]
\tikzstyle{every node}=[inner sep=0,minimum size=0mm] 
\tikzstyle{level 1}=[sibling distance=15mm]
\tikzstyle{level 2}=[sibling distance=8mm]
\tikzstyle{level 3}=[sibling distance=8mm]
\tikzstyle{level 4}=[sibling distance=8mm]
\node {\footnotesize $9$}
  child {node {\footnotesize $8$}
    child {node {\footnotesize $5$}
    }
    child {node {\footnotesize $1$}
    }
  }
  child {
    node {\footnotesize $7$}
      child {
        node {\footnotesize $6$}
          child {
            node {\footnotesize $3$}
            child [missing]
            child {
              node {\footnotesize $2$}
            }
          }
          child [missing]
      }
      child {
          node {\footnotesize $4$}
      }
   };
\end{scope}
\end{tikzpicture}
}
\hspace{20pt}
\raisebox{0ex}{\begin{tikzpicture}
\begin{scope}
\tikzstyle{every child}=[level distance=2.5mm]
\tikzstyle{every node}=[inner sep=0,minimum size=0mm] 
\tikzstyle{level 1}=[sibling distance=15mm]
\tikzstyle{level 2}=[sibling distance=8mm]
\tikzstyle{level 3}=[sibling distance=8mm]
\tikzstyle{level 4}=[sibling distance=8mm]
\node {\footnotesize $9$}
  child {node {\footnotesize $8$}
    child {node {\footnotesize $5$}
    }
    child {node {\footnotesize $1$}
    }
  }
  child {
    node {\footnotesize $7$}
      child {
        node {\footnotesize $6$}
          child {
            node {\footnotesize $3$}
            child {
              node {\footnotesize $2$}
            }
            child [missing]
          }
          child [missing]
      }
      child {
          node {\footnotesize $4$}
      }
   };
\end{scope}
\end{tikzpicture}
}
\end{center}
\caption{The four non canonical decreasing trees whose post-order reading is $\tau = 5 \, 1 \, 8 \, 2 \, 3 \, 6 \, 4 \, 7 \, 9$.} \label{FIG:example_canonical_tree}
\end{figure}
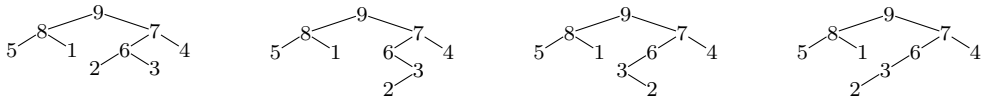

\section{A recursive bijection between $\Av(231)$ and $\Av(132)$}\label{SEC:P}

In this section we introduce a bijection, that we denote $\P$, between permutations in $\Av(231)$ and those in $\Av(132)$. 
Even though it is very naturally defined, this bijection seems to appear rather rarely in the literature -- only in~\cite{DDJSS12} to our knowledge. 

It is very easy to describe $\P$ recursively using the \emph{sum}, $\oplus$,  and \emph{skew sum}, $\ominus$, operations on permutations. These operations are easily understood on the \emph{diagrams} corresponding to permutations. The diagram of any permutation $\sigma$ of $[n]$ is the set of $n$ points in the plane at coordinates $(i, \sigma(i))$ -- see Figure~\ref{FIG:diagram} for some examples.
If $\alpha$ is a permutation of $[a]$ and $\beta$ of $[b]$ we define:
\vspace*{-0.3cm}
\begin{align*}
\alpha \oplus \beta &= \alpha \, (\beta + a) \text{ whose diagram is } \begin{tikzpicture}
\begin{scope}[scale=.2]
\useasboundingbox (0,1) rectangle (4,5);
\draw (0,0) rectangle (2,2);
\draw (1,1) node {\small $\alpha$};
\draw (2,2) rectangle (4,4);
\draw (3,3) node {\small $\beta$};
\end{scope}
\end{tikzpicture} \\
\alpha \ominus \beta &= (\alpha + b) \, \beta \text{ whose diagram is } \begin{tikzpicture}
\begin{scope}[scale=.2]
\useasboundingbox (0,1) rectangle (4,5);
\draw (0,2) rectangle (2,4);
\draw (1,3) node {\small $\alpha$};
\draw (2,0) rectangle (4,2);
\draw (3,1) node {\small $\beta$};
\end{scope}
\end{tikzpicture}\ .
\end{align*}
Here for example $\beta + a$ is just that sequence obtained by adding $a$ to every element of the sequence $\beta$ and \begin{tikzpicture}
\begin{scope}[scale=.2]
\useasboundingbox (0,0.5) rectangle (2,2.5);
\draw (0,0) rectangle (2,2);
\draw (1,1) node {\small $\alpha$};
\end{scope}
\end{tikzpicture} represents the diagram of permutation $\alpha$.

\begin{example}
Let $\alpha = 2\,3\,1$ and $\beta = 3\,1\,4\,2$. Then $\alpha \oplus \beta = 2\,3\,1\,6\,4\,7\,5$, while $\alpha\ominus \beta = 6\,7\,5\,3\,1\,4\,2$, as shown in Figure~\ref{FIG:diagram}.
\end{example}

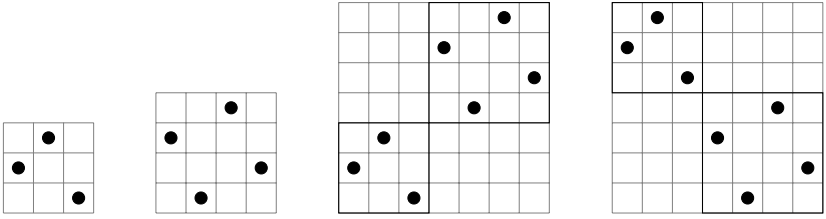
\begin{figure}[ht]
\begin{center}
\begin{tikzpicture}
\begin{scope}[scale=0.4]
\permutation{2,3,1}
\end{scope}
\end{tikzpicture} \qquad 
\begin{tikzpicture}
\begin{scope}[scale=0.4]
\permutation{3,1,4,2}
\end{scope}
\end{tikzpicture} \qquad 
\begin{tikzpicture}
\begin{scope}[scale=0.4]
\permutation{2,3,1,6,4,7,5}
\draw (1,1) rectangle (4,4);
\draw (4,4) rectangle (8,8);
\end{scope}
\end{tikzpicture} \qquad 
\begin{tikzpicture}
\begin{scope}[scale=0.4]
\permutation{6,7,5,3,1,4,2}
\draw (1,5) rectangle (4,8);
\draw (4,1) rectangle (8,5);
\end{scope}
\end{tikzpicture} \qquad 
\end{center}
\caption{From left to right, the diagrams of the permutations $\alpha = 2\,3\,1$, $\beta = 3\,1\,4\,2$, $\alpha \oplus \beta$ and $\alpha\ominus \beta$. \label{FIG:diagram}}
\end{figure}

Any $\pi \in \Av(231)$ is either the empty permutation $\varepsilon$ or has a unique decomposition in the form $\alpha \oplus (1 \ominus \beta)$ where $\alpha, \beta \in \Av(231)$ (and are possibly empty), and conversely any permutation of this latter form lies in $\Av(231)$. This is simply because the elements preceding the maximum in a $231$-avoiding permutation must all be less than those following the maximum, and the prefix before and suffix after the maximum must also avoid 231. Conversely, if a permutation has this structure it cannot involve 231. This decomposition makes it easy to define the bijection $\P$ recursively: $\P(\varepsilon)=\varepsilon$ and 
\[
\text{If } \pi = \alpha \oplus (1 \ominus \beta) \text{ then } \P(\pi) = (\P(\alpha) \oplus 1) \ominus \P(\beta).
\]
Alternatively, with diagrams:
\[
\begin{tikzpicture}
\begin{scope}[scale=.3]
\useasboundingbox (-0.1,1) rectangle (5.6,5);
\draw (0,0) rectangle (2.5,2.5);
\draw (1.25,1.25) node {\scriptsize $\alpha$};
\draw (3,2.5) rectangle (5.5,5);
\draw (4.25,3.75) node {\scriptsize $\beta$};
\draw (2.75,5.25) [fill] circle (0.2);
\end{scope}
\end{tikzpicture}
\quad
\mbox{\raisebox{10pt}{$\stackrel{P}{\longrightarrow}$}}
\quad
\begin{tikzpicture}
\begin{scope}[scale=.3]
\useasboundingbox (-0.1,1) rectangle (5.6,5);
\draw (0,2.5) rectangle (2.5,5);
\draw (1.25,3.75) node {\scriptsize $\P(\alpha)$};
\draw (3,0) rectangle (5.5,2.5);
\draw (4.25,1.25) node {\scriptsize $\P(\beta)$};
\draw (2.75,5.25) [fill] circle (0.2);
\end{scope}
\end{tikzpicture}
\:
\text{\raisebox{12pt}{.}}
\]
As the $132$-avoiding permutations have a generic decomposition of the form shown on the right above, 
and since $\P(1) = 1$ maps the unique $231$-avoiding permutation of size $1$ to the unique $132$-avoiding permutation of size $1$, 
induction immediately implies that $\P: \Av(231) \to \Av(132)$ is a bijection. 

Let us introduce a notational convention that we shall use throughout. For any $\pi \in \Av(231)$, we can think of the sequence $P(\pi)$ as describing a relabeling of the values that occur in $\pi$ according to a certain permutation, denoted $\lambda_\pi$. Specifically, this means that $\lambda_\pi$ is defined by $P(\pi) = \lambda_\pi \circ \pi$. 

\begin{example}\label{EX:bijection_P}
For $\pi = 1 \, 5\, 3\, 2\, 4\, 9\, 8\, 6\, 7 \in \Av(231)$, we have $\P(\pi) = 7\, 8\, 5\, 4\, 6\, 9\, 3\, 1\, 2$. The corresponding relabeling $\lambda_\pi$ is $\lambda_\pi = 7\, 4\, 5\, 6\, 8\, 1\, 2\, 3\, 9$.
\end{example}

Recall from Section~\ref{SEC:Mireille} that $\InTree(\pi)$ is the decreasing binary tree whose in-order reading is $\pi$. It follows immediately by induction from the recursive description of $\P$ that:

\begin{observation}\label{OBS:P_preserves_Tin}
For any $\pi \in \Av(231)$, both $\InTree(\pi)$ and $\InTree(\P(\pi))$ have the same underlying unlabeled tree, or briefly ``$\P$ preserves the shape of in-order trees''. 
\end{observation}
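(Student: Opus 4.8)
The plan is to proceed by induction on the size of $\pi$, following the recursive structure of both $\P$ and $\InTree$ that has already been set up in the excerpt. The base case is $\pi = \varepsilon$, where $\P(\varepsilon) = \varepsilon$ and $\InTree(\varepsilon) = T_\varepsilon$, so both in-order trees are the empty (unlabeled) tree and the claim is trivial.

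For the inductive step, I would take $\pi \in \Av(231)$ with $\pi \neq \varepsilon$ and use the unique decomposition $\pi = \alpha \oplus (1 \ominus \beta)$ with $\alpha, \beta \in \Av(231)$. The key observation is that in this decomposition the maximum value $n$ of $\pi$ is precisely the element ``$1$'' in the ``$1 \ominus \beta$'' block, since everything in $\alpha$ is smaller than everything in $1 \ominus \beta$, and within $1 \ominus \beta$ the entry corresponding to $1$ sits above all of $\beta$. Writing $\pi$ in the form $\alpha' n \beta'$ where $n = \max(\pi)$, we thus have $\alpha' = \alpha$ (as a sequence of values) and $\beta' = \beta$ (up to the standard order-isomorphism used implicitly throughout the paper). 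By the recursive description of $\InTree$ from Observation~\ref{OBS:Tin}, the tree $\InTree(\pi)$ has root $n$, left subtree $\InTree(\alpha)$ and right subtree $\InTree(\beta)$. On the other side, $\P(\pi) = (\P(\alpha) \oplus 1) \ominus \P(\beta)$, and the maximum of $\P(\pi)$ is again the entry ``$1$'' appearing in the block $\P(\alpha) \oplus 1$ — it exceeds everything in $\P(\alpha)$ and, because of the skew sum, everything in $\P(\beta)$ as well. Hence writing $\P(\pi)$ in the form $\tilde\alpha\, m\, \tilde\beta$ with $m = \max(\P(\pi))$ gives $\tilde\alpha = \P(\alpha)$ and $\tilde\beta = \P(\beta)$, so $\InTree(\P(\pi))$ has root $m$, left subtree $\InTree(\P(\alpha))$ and right subtree $\InTree(\P(\beta))$.

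Now the induction hypothesis applied to $\alpha$ and to $\beta$ says that $\InTree(\alpha)$ and $\InTree(\P(\alpha))$ have the same underlying unlabeled tree, and likewise for $\beta$ and $\P(\beta)$. Since $\InTree(\pi)$ and $\InTree(\P(\pi))$ are both obtained by attaching these respective left and right subtrees to a single root vertex, their underlying unlabeled trees coincide. This completes the induction and the proof.

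The only point requiring any care — and thus the main (minor) obstacle — is the bookkeeping around which entry plays the role of ``$n$'' (resp. ``$m$'') in the $\alpha n \beta$-form used by $\InTree$, and verifying that it really corresponds to the ``$1$'' introduced by the $\oplus(1\ominus\cdot)$ construction on both sides. Once one checks that this is consistent with the paper's convention that any sequence of distinct values is read as a permutation with ``$n$'' denoting its maximum, the argument is a routine structural induction; no statistic-preservation or labeling subtleties enter, precisely because the claim is only about the shape of the unlabeled tree.
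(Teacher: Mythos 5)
Your argument is correct and is exactly the induction the paper has in mind: the paper simply asserts that the observation ``follows immediately by induction from the recursive description of $\P$,'' and your write-up supplies precisely those details (root at the maximum, left/right subtrees coming from $\alpha$ and $\beta$ on one side and from $\P(\alpha)$ and $\P(\beta)$ on the other, shapes matched by the induction hypothesis). Nothing further is needed.
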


\begin{figure}[ht]
\begin{center}
$\InTree(\pi)=$\begin{tikzpicture}
\begin{scope}
\tikzstyle{every child}=[level distance=2.5mm]
\tikzstyle{every node}=[inner sep=0,minimum size=0mm] 
\tikzstyle{level 1}=[sibling distance=15mm]
\tikzstyle{level 2}=[sibling distance=11mm]
\tikzstyle{level 3}=[sibling distance=8mm]
\tikzstyle{level 4}=[sibling distance=6mm]
\node {\footnotesize $9$}
  child {node {\footnotesize $5$}
    child {node {\footnotesize $1$}
    }
    child {node {\footnotesize $4$}
      child {node {\footnotesize $3$}
        child [missing]
        child {node {\footnotesize $2$}}
      }
      child [missing] 
    }
  }
  child {node {\footnotesize $8$}
    child [missing] 
    child {node {\footnotesize $7$}
      child {node {\footnotesize $6$}}
      child [missing] 
    }
  };
\end{scope}
\end{tikzpicture} 
\qquad 
$\InTree(\P(\pi))=$\begin{tikzpicture}
\begin{scope}
\tikzstyle{every child}=[level distance=2.5mm]
\tikzstyle{every node}=[inner sep=0,minimum size=0mm] 
\tikzstyle{level 1}=[sibling distance=15mm]
\tikzstyle{level 2}=[sibling distance=11mm]
\tikzstyle{level 3}=[sibling distance=8mm]
\tikzstyle{level 4}=[sibling distance=6mm]
\node {\footnotesize $9$}
  child {node {\footnotesize $8$}
    child {node {\footnotesize $7$}
    }
    child {node {\footnotesize $6$}
      child {node {\footnotesize $5$}
        child [missing]
        child {node {\footnotesize $4$}}
      }
      child [missing] 
    }
  }
  child {node {\footnotesize $3$}
    child [missing] 
    child {node {\footnotesize $2$}
      child {node {\footnotesize $1$}}
      child [missing] 
    }
  };
\end{scope}
\end{tikzpicture}
\end{center}
\caption{$\InTree(\pi)$ and $\InTree(\P(\pi))$ for the permutation $\pi = 1 \, 5\, 3\, 2\, 4\, 9\, 8\, 6\, 7$ of Example~\ref{EX:bijection_P}. \label{FIG:T_in}}
\end{figure}

Figure~\ref{FIG:T_in} shows an example. The acute reader will notice that in Figure~\ref{FIG:T_in}, not only $\InTree(\pi)$ and $\InTree(\P(\pi))$ have the same shape, but we also have $\lambda_\pi(\InTree(\pi)) = \InTree(\P(\pi))$. This is actually true in general, but the proof requires a bit more work (see the proof of Observation~\ref{OBS:identity} p.\pageref{OBS:identity}).

However, some nice properties of $\P$ in terms of permutation statistics follow from the simple fact that $\P$ preserves the shape of in-order trees. 

Recall that, for $\pi$ a permutation of $[n]$, a \emph{left-to-right} (resp.~\emph{right-to-left}) \emph{maximum} of $\pi$ is an element $\pi(i)$ such that for all $j<i$ (resp.~$j>i$), $\pi(j) < \pi(i)$, and that the \emph{up-down word} of $\pi$ is $w_{\pi} \in \{u,d\}^{n-1}$ with $w_{\pi}(i) = u$ (resp.~$d$) if $\pi(i) < \pi(i+1)$ (resp.~$\pi(i) > \pi(i+1)$).

\begin{observation}
For any permutation, the shape of its in-order tree determines the number and positions of its right-to-left maxima, the number and positions of its left-to-right maxima and its up-down word. 
\label{OBS:stat_on_InTree}
\end{observation}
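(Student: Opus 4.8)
The plan is to prove Observation~\ref{OBS:stat_on_InTree} by a structural recursion on the in-order tree, showing that each of the three statistics can be read off directly from the unlabeled shape. The key observation to exploit is that if $\pi = \alpha n \beta$, then $\InTree(\pi)$ has root with left subtree $\InTree(\alpha)$ and right subtree $\InTree(\beta)$, and that in a decreasing binary tree the \emph{root carries the maximum value} of the whole permutation. Thus positional information about where maxima occur, and the up-down pattern at the ``seam'' between $\alpha$, $n$, and $\beta$, are all forced by the shape alone.

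First I would handle the right-to-left maxima. I claim $\pi(i)$ is a right-to-left maximum of $\pi$ if and only if, in $\InTree(\pi)$, the vertex read $i$-th in in-order lies on the \emph{rightmost branch} of the tree (the path from the root obtained by always going to the right child). Indeed, $\pi = \alpha n \beta$: the last entry $n$ of the ``head block'' position, i.e.\ the root, is always a right-to-left maximum, and moreover an entry is a right-to-left maximum of $\pi$ exactly when it is $n$ itself or a right-to-left maximum of $\beta$; recursing, the right-to-left maxima are precisely the root together with the right-to-left maxima of the right subtree, which is exactly the rightmost branch. Since ``being on the rightmost branch'' and the in-order position are both shape data, this gives both the number and the positions of right-to-left maxima. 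Symmetrically — using that every entry of $\alpha$ is irrelevant to whether a later entry is a left-to-right maximum once $n$ appears, so the left-to-right maxima of $\pi$ are the left-to-right maxima of $\alpha$ followed by $n$ — the left-to-right maxima of $\pi$ correspond to the vertices on the \emph{leftmost branch together with the root}; more precisely, $\pi(i)$ is a left-to-right maximum iff the $i$-th vertex in in-order is an ancestor of the root via left-edges only, i.e.\ lies on the leftmost branch of the tree, \emph{and} its right subtree contains no... — here one must be slightly careful, so let me just say: left-to-right maxima of $\pi = \alpha n \beta$ are the left-to-right maxima of $\alpha$ (which, inductively, sit on the leftmost branch of $\InTree(\alpha)$) plus the position of $n$ (the root); iterating, these are the vertices obtained from the root by repeatedly descending to the left child, which is a shape-determined set of positions.

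For the up-down word, I would argue that the descent set of $\pi$ is determined by the shape. The clean statement is: for each $i$, whether $\pi(i) > \pi(i+1)$ depends only on the shape, via the recursive decomposition $\pi = \alpha n \beta$. The boundary cases are the position just before $n$ and just after $n$: the entry immediately preceding $n$ in $\pi$ (the last entry of $\alpha$, i.e.\ the last vertex of $\InTree(\alpha)$ in in-order) is smaller than $n$, so that slot is always $u$; and $n$ is followed by the first entry of $\beta$, which is smaller, so that slot is always $d$ (with the obvious degenerate handling when $\alpha$ or $\beta$ is empty). All other adjacent pairs lie entirely within $\alpha$ or entirely within $\beta$, and we apply the inductive hypothesis to $\InTree(\alpha)$ and $\InTree(\beta)$. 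Assembling: $w_\pi = w_\alpha \cdot u \cdot d \cdot w_\beta$ when both $\alpha,\beta$ nonempty, with the evident truncations otherwise — and every ingredient on the right is shape data.

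I expect the main obstacle to be stating the left-to-right maximum correspondence cleanly and verifying it does not interact badly with the right subtree of a leftmost-branch vertex (unlike right-to-left maxima, where the left subtree is genuinely irrelevant, a leftmost-branch vertex $v$ in $\InTree(\alpha n \beta)$ can have a nonempty right subtree, yet entries in that right subtree are \emph{not} left-to-right maxima of $\pi$ because $n$ sits to their left). The resolution is to phrase it purely recursively as above — left-to-right maxima of $\pi$ = left-to-right maxima of $\alpha$, with $n$ appended — rather than via a single global branch condition, and then observe that unfolding the recursion identifies exactly the left-spine vertices of the successive left subtrees, all of which is manifestly shape-determined. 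Once the three recursive identities $w_\pi = w_\alpha u d w_\beta$, $\mathrm{RLmax}(\pi) = \{\text{root}\}\cup\mathrm{RLmax}(\beta)$, $\mathrm{LRmax}(\pi) = \mathrm{LRmax}(\alpha)\cup\{\text{root}\}$ are in place, the conclusion follows by a routine induction, and the base case $\pi = \varepsilon$ (or $\pi$ of size $1$) is trivial.
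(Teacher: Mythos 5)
Your proposal is correct and takes essentially the same route as the paper: the up-down word is handled by induction on the decomposition $\pi = \alpha n \beta$ via the recursive definition of the in-order reading, and the right-to-left (resp.\ left-to-right) maxima are identified with the vertices on the right (resp.\ left) branch from the root, which is pure shape data. Your recursive phrasing of the left-to-right maxima correctly unfolds to the left branch, so the extra caution you flag is already resolved by your own argument.
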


\begin{proof}
Let $\pi$ by any permutation. 
That $w_{\pi}$ is determined by the shape of $\InTree(\pi)$ follows immediately by induction, from the recursive definition of the in-order reading $\inreading$. 
And the right-to-left (resp. left-to-right) maxima of $\pi$ correspond to the vertices lying on the right (resp. left) branch from the root of $\InTree(\pi)$, yielding the conclusion. 
\end{proof}

Observations~\ref{OBS:P_preserves_Tin} and~\ref{OBS:stat_on_InTree} then give that:

\begin{corollary}
$\P$ preserves the following statistics: the number and positions of the right-to-left maxima, the number and positions of the left-to-right maxima and the up-down word.
\end{corollary}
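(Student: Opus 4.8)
The plan is to combine the two preceding results mechanically: Observation~\ref{OBS:P_preserves_Tin} says that for every $\pi \in \Av(231)$ the trees $\InTree(\pi)$ and $\InTree(\P(\pi))$ have the same underlying unlabeled shape, and Observation~\ref{OBS:stat_on_InTree} says that each of the statistics in question --- the number and positions of the right-to-left maxima, the number and positions of the left-to-right maxima, and the up-down word --- is a function of that unlabeled shape alone. Hence each statistic takes the same value on $\pi$ and on $\P(\pi)$, which is precisely the assertion that $\P$ preserves it.

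Concretely, I would write: let $\pi \in \Av(231)$. By Observation~\ref{OBS:P_preserves_Tin}, $\InTree(\pi)$ and $\InTree(\P(\pi))$ share the same underlying unlabeled tree $T$. By Observation~\ref{OBS:stat_on_InTree} applied to $\pi$, the up-down word $w_\pi$, together with the number and positions of the left-to-right and right-to-left maxima of $\pi$, are all determined by $T$; applying the same observation to $\P(\pi)$, the corresponding data for $\P(\pi)$ are determined by the same $T$. Therefore these data agree for $\pi$ and $\P(\pi)$, i.e. $w_\pi = w_{\P(\pi)}$, the left-to-right maxima of $\pi$ and of $\P(\pi)$ occur at the same positions and hence are equal in number, and likewise for the right-to-left maxima. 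One small point worth a clause of justification is that ``positions'' here means positions in the one-line notation (equivalently, positions in the in-order reading), and Observation~\ref{OBS:stat_on_InTree} is phrased exactly in those terms, so no reconciliation is needed; the positions of the left-to-right (resp. right-to-left) maxima are read off as the in-order positions of the vertices on the left (resp. right) branch from the root of $T$.

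There is essentially no obstacle here: both ingredients have already been established, and the corollary is a one-line consequence. The only thing to be careful about is not to overclaim --- the corollary asserts preservation of these particular tree-shape-determined statistics, not of statistics such as the number of inversions or descents counted with multiplicity beyond what the up-down word encodes, and the proof should make clear it is riding entirely on the shape-preservation of Observation~\ref{OBS:P_preserves_Tin} rather than on the finer fact (mentioned in the surrounding text but deferred) that $\lambda_\pi$ carries $\InTree(\pi)$ to $\InTree(\P(\pi))$.

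\begin{proof}
Let $\pi \in \Av(231)$. By Observation~\ref{OBS:P_preserves_Tin}, the decreasing binary trees $\InTree(\pi)$ and $\InTree(\P(\pi))$ have the same underlying unlabeled tree, say $T$. By Observation~\ref{OBS:stat_on_InTree} applied to $\pi$, the up-down word of $\pi$, the number and positions of its left-to-right maxima, and the number and positions of its right-to-left maxima are all determined by $T$. Applying the same observation to $\P(\pi)$, the corresponding data for $\P(\pi)$ are determined by the same tree $T$. Hence all of these statistics take the same value on $\pi$ as on $\P(\pi)$, which is exactly the claimed preservation.
\end{proof}
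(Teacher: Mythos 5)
Your proof is correct and is exactly the paper's argument: the corollary is stated there as an immediate consequence of Observations~\ref{OBS:P_preserves_Tin} and~\ref{OBS:stat_on_InTree}, which is precisely the combination you carry out.
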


As noted earlier, $\P$ has already been used in the study of permutation statistics. Namely, \cite[proof of Theorem 2.6]{DDJSS12} shows that $\P$ preserves the descent set, and hence the major index. However, many other classical permutation statistics are also preserved by $\P$, namely all those that depend only on the up-down word, for instance the descent set, the major index, the number of peaks. Among all the statistics reported in~\cite[Section 2]{ClKi08}, the only ones that are preserved by $\P$ are the ones that depend only on the shape of in-order trees.

\section{Proof of Conjecture~\ref{CONJ:BouvelGuibert}}
\label{SEC:main}

\subsection{Preparation}\label{SEC:prep}

In addition to the results of Section~\ref{SEC:Mireille}, the principal ingredients in the proof to follow are a pair of observations concerning $\P$ and operators $\A$ which are compositions of $\S$ and $\R$.

\begin{observation}
\label{OBS:SR}
Let $\tau$ be any permutation, and $\A$ be any composition of the operators $\S$ and $\R$. Suppose that $x, y \in [n]$ and that in $\tau$ there are no values larger than $\max(x,y)$ occurring between $x$ and $y$. Then the same holds in $\A(\tau)$.
\end{observation}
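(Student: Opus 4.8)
The plan is to prove Observation~\ref{OBS:SR} by structural induction on the length of the word defining $\A$ as a composition of $\S$ and $\R$. The base case $\A = \mathrm{id}$ is trivial since $\A(\tau) = \tau$. For the inductive step it suffices to show that the claimed property is preserved under left-composition with a single copy of $\R$ and with a single copy of $\S$: that is, if the property ``no value larger than $\max(x,y)$ occurs between $x$ and $y$'' holds in a permutation $\sigma$, then it also holds in $\R(\sigma)$ and in $\S(\sigma)$. Here I must be slightly careful about what ``$x$ and $y$'' refer to under these operators: since $\S$ and $\R$ only permute the \emph{positions} of the values (they never change the underlying set of values), the statement is cleanly phrased in terms of the two fixed values $x,y \in [n]$, and ``between $x$ and $y$'' means ``in the positions strictly between the position of $x$ and the position of $y$''.

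The reversal case is immediate: $\R$ reverses the sequence of positions, so the set of values lying strictly between $x$ and $y$ in $\sigma$ is exactly the same as the set lying strictly between them in $\R(\sigma)$; hence the property transfers verbatim.

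The stack-sorting case is the substantive one, and I expect it to be the main obstacle. Here I would use the recursive definition $\S(\alpha n \beta) = \S(\alpha)\,\S(\beta)\,n$, where $n = \max(\alpha n \beta)$, and argue by induction on the size of $\sigma$. Write $\sigma = \alpha\, n\, \beta$. Suppose $x$ and $y$ are two values with no value exceeding $\max(x,y)$ between them in $\sigma$. If $n \in \{x,y\}$, say $y = n$, then since $n$ is the global maximum no value can exceed it, so the hypothesis about $\sigma$ is automatic; and in $\S(\sigma)$ the value $n$ sits at the very end, so every value lies ``before'' it and we just need that the values strictly between $x$ and $n$ in $\S(\sigma)$ (namely those values that follow $x$ in $\S(\alpha)\S(\beta)$) are all $< n$ — which is automatic. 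Otherwise $n \notin \{x,y\}$, and since $n$ is larger than $\max(x,y)$, the no-larger-value-between hypothesis forces $x$ and $y$ to lie on the same side of $n$ in $\sigma$: either both in $\alpha$ or both in $\beta$. Say both are in $\alpha$ (the case of $\beta$ is symmetric). Then the hypothesis, restricted to $\alpha$, says exactly that no value of $\alpha$ larger than $\max(x,y)$ lies between $x$ and $y$ in $\alpha$, so by the inductive hypothesis (applied to the shorter permutation $\alpha$ and the operator $\S$) the same holds in $\S(\alpha)$. Finally, in $\S(\sigma) = \S(\alpha)\,\S(\beta)\,n$, the values occurring strictly between $x$ and $y$ are a subset of the values of $\S(\alpha)$ occurring strictly between them — because $\S(\alpha)$ is a contiguous prefix of $\S(\sigma)$ and both $x,y$ lie inside it — so we are done. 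One point worth spelling out in the write-up: the hypothesis of Observation~\ref{OBS:SR} is about values, not positions, so when I pass to a subword like $\alpha$ I should note that ``$x$ and $y$ lie in $\alpha$ with no larger value of $\alpha$ between them'' follows from the corresponding statement in $\sigma$ precisely because any value of $\alpha$ between $x$ and $y$ in $\alpha$ is also between them in $\sigma$ and does not exceed $\max(x,y)$.

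Combining the two single-step results with the base case via induction on the number of symbols in $\A$ yields the statement for an arbitrary composition $\A$ of $\S$ and $\R$. The only mildly delicate bookkeeping is making sure the inductions are nested correctly — an outer induction on $|\A|$ whose inductive step, in the $\S$-case, itself invokes an induction on $|\sigma|$ — but since the $\S$-step is proved as a standalone lemma (``if the property holds in $\sigma$ it holds in $\S(\sigma)$'') there is no circularity.
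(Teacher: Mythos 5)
Your proof is correct and follows essentially the same route as the paper's: reduce to a single application of $\R$ (trivial, same values between $x$ and $y$) or of $\S$ (induction on $|\sigma|$ via $\S(\alpha n\beta)=\S(\alpha)\S(\beta)n$, with the case $n\in\{x,y\}$ vacuous and the case $n\notin\{x,y\}$ forcing $x,y$ into the same block). The paper states this more tersely; your version merely spells out the bookkeeping the paper leaves implicit.
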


\begin{proof}
It suffices to prove the result for $\S$ and $\R$ individually. For $\R$ it is trivial since the elements between $x$ and $y$ in $\tau$ and $\R(\tau)$ are the same. But for $\S$ it follows immediately from the recursive description: $\S(\tau) = \S(\alpha n \beta) = \S(\alpha) \S(\beta) n$. If one of $x$ or $y$ is $n$ then there is nothing to prove, while if not then they must both occur in $\alpha$ or in $\beta$ and the result follows by induction.
\end{proof}

For the second observation, recall that, for any $\pi \in \Av(231)$, we denote $\lambda_\pi$ the permutation such that $P(\pi) = \lambda_\pi \circ \pi$, and that we view it as a relabeling of the elements of $\pi$. 

\begin{observation}
\label{OBS:P}
Let $\pi \in \Av(231)$ be given and suppose that $x, y \in [n]$, $x < y$, and in $\pi$ there are no values larger than $y$ 
occurring between $x$ and $y$. Then $\lambda_{\pi}(x) < \lambda_{\pi}(y)$.
\end{observation}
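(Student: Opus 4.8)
The plan is to argue by induction on the size of $\pi$, following the recursive structure $\pi = \alpha \oplus (1 \ominus \beta)$ that defines $\P$. The base case $\pi = \varepsilon$ is vacuous. For the inductive step, write $n = |\pi|$ and let $m$ be the position of the maximum value $n$ in $\pi$, so that $\alpha$ occupies positions $1,\dots,m-1$ with values $1,\dots,m-1$, the value $n$ sits at position $m$, and $\beta$ occupies positions $m+1,\dots,n$ with values $m,\dots,n-1$. Recall that $\P(\pi) = (\P(\alpha)\oplus 1)\ominus\P(\beta)$; reading off what $\lambda_\pi$ does, we see that $\lambda_\pi$ sends the value $n$ to the value $m$ (the position where $1$ sits inside $\P(\alpha)\oplus 1$, lifted above $\P(\beta)$), it maps the block of values $\{1,\dots,m-1\}$ of $\alpha$ to the top block $\{m+1,\dots,n\}$ in an order-isomorphic way to $\lambda_\alpha$, and it maps the block of values $\{m,\dots,n-1\}$ of $\beta$ to the bottom block $\{1,\dots,m-1\}$ in an order-isomorphic way to $\lambda_\beta$. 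In particular $\lambda_\pi$ restricted to the $\alpha$-values agrees (up to the common shift by $m+1$ on values and positions) with $\lambda_\alpha$, and likewise on the $\beta$-values with $\lambda_\beta$.

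Now take $x < y$ in $[n]$ with no value exceeding $y$ strictly between $x$ and $y$ in $\pi$, and do a case analysis on where $x$ and $y$ lie relative to the decomposition. If $y = n$: since $n$ is the maximum, $\lambda_\pi(y) = m \geq \lambda_\pi(x)$ would need $\lambda_\pi(x) < m$; but if $x$ is an $\alpha$-value then $\lambda_\pi(x) \geq m+1 > m$, which forces $x$ to the right of $n$ — impossible since then $n$ lies between $x$ and $y=n$ trivially, so actually $x$ must be a $\beta$-value (or we note the "no larger value between" hypothesis is automatic and $x\in\beta$ is the only consistent option when $x$ precedes... ) — here one simply observes $x$ must be a $\beta$-value for the hypothesis to be compatible, and then $\lambda_\pi(x) \leq m-1 < m = \lambda_\pi(y)$. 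If $x = n$: this cannot happen, since $x<y$ and $n$ is maximal. If neither $x$ nor $y$ equals $n$: the hypothesis that no value larger than $y$ lies between them, combined with the fact that $n > y$ sits at position $m$, shows $x$ and $y$ cannot straddle position $m$; so both are $\alpha$-values or both are $\beta$-values. In the first case, translating values and positions down by $m+1$ and invoking the inductive hypothesis on $\alpha$ with $\lambda_\alpha$ gives $\lambda_\pi(x) < \lambda_\pi(y)$; symmetrically in the second case using $\beta$ and $\lambda_\beta$, after the shift that makes $\beta$'s values start at $1$.

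The one point that needs care — and which I expect to be the main obstacle — is the bookkeeping in the $y = n$ (and, dually, the "straddling position $m$") cases: one must check precisely that the hypothesis "no value larger than $y$ occurs between $x$ and $y$ in $\pi$" rules out the configurations that would break monotonicity, namely $x$ an $\alpha$-value with $y$ a $\beta$-value or $y=n$, and that it is consistent exactly with the configurations handled above. Concretely, if $x\in\alpha$ and $y\in\beta$ with $x$ before $y$ in $\pi$, then $n$ lies strictly between them in $\pi$ and $n > y$, contradicting the hypothesis; if $x\in\beta$ and $y\in\alpha$ then $x>y$ by the value structure of $\alpha\oplus(1\ominus\beta)$, contradicting $x<y$. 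So the straddling cases never arise under the hypothesis, and the induction goes through cleanly. Everything else is a routine unwinding of the recursive definition of $\P$ together with the bijection $\P:\Av(231)\to\Av(132)$ established earlier.
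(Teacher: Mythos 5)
Your overall strategy is exactly the paper's: induct on $|\pi|$ via the decomposition $\pi=\alpha\oplus(1\ominus\beta)$, observe that the hypothesis forces $x,y$ to lie in the same block (or $y=n$), and push the inequality through the block structure of $\lambda_\pi$. The straddling analysis and the two same-block cases are sound, since for those all you need is that $\lambda_\pi$ restricted to the $\alpha$-values (resp.\ $\beta$-values) is an order-isomorphic copy of $\lambda_\alpha$ (resp.\ $\lambda_\beta$), which is true.

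However, your description of $\lambda_\pi$ is wrong, and this breaks the $y=n$ case. Writing $a=|\alpha|$, $b=|\beta|$ (so $m=a+1$), the correct bookkeeping from $\P(\pi)=(\P(\alpha)\oplus 1)\ominus\P(\beta)$ is: $\lambda_\pi$ \emph{fixes} $n$ (the ``$1$'' sits at value $a+1$ inside $\P(\alpha)\oplus 1$ and the skew sum adds $b$, giving $a+1+b=n$, not $m$); the $\alpha$-values $\{1,\dots,a\}$ go to $\{b+1,\dots,n-1\}$ via $b+\lambda_\alpha$; and the $\beta$-values $\{a+1,\dots,n-1\}$ go to $\{1,\dots,b\}$ via $\lambda_\beta$. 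Your claimed images $\{m+1,\dots,n\}$ and $\{1,\dots,m-1\}$ even have the wrong cardinalities ($b$ and $a$ instead of $a$ and $b$); the example $\pi=1\,5\,3\,2\,4\,9\,8\,6\,7$ with $\lambda_\pi=7\,4\,5\,6\,8\,1\,2\,3\,9$ shows $\lambda_\pi(9)=9$. Because you set $\lambda_\pi(n)=m$, you are forced in the $y=n$ case to argue that $x$ must be a $\beta$-value, and the justification you give for this is false: when $y=n$ the hypothesis ``no value larger than $y$ between $x$ and $y$'' is vacuously satisfied for \emph{any} $x$, including $x\in\alpha$ (and an $\alpha$-value is to the \emph{left} of $n$, not forced to its right). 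With your block assignments such an $x$ would satisfy $\lambda_\pi(x)>\lambda_\pi(n)$, contradicting the statement. The fix is simply the correct computation: $\lambda_\pi(n)=n$, so $\lambda_\pi(x)<n=\lambda_\pi(y)$ trivially whenever $y=n$ --- which is precisely how the paper disposes of this case. With that repair your argument coincides with the paper's proof.
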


In other words, Observation~\ref{OBS:P} simply says that $\lambda_\pi$ preserves the ordering among elements of $\pi$ which do not contain a larger element between them. 

\begin{proof}
The key argument is that, from the construction of $P$, the only way that one element can be moved above another one is to (at some point in the recursion) have a larger element in between. This can be expressed formally by induction, using the recursive definition of~$\P$. 

Let $\pi = \alpha n \beta = \alpha \oplus (1 \ominus \beta)$ and let $a=|\alpha|$ and $b = |\beta|$ (so $a + b = n-1$). If $y = n$ the result is trivial as $\lambda_{\pi}$ fixes $n$. Otherwise $x, y \in \alpha$ or $x, y \in \beta$, as by assumption they have no larger element between them in $\pi$. In the first case we have $x=\pi(i) = \alpha(i)$ and $y=\pi(j) = \alpha(j)$ for $i, j \in [a]$, and in the second case we have $x=\pi(i+a+1) = \beta(i) + a$ and $y=\pi(j+a+1) = \beta(j) + a$ for $i,  j \in [b]$.
It follows by induction that (depending on which case applies) $\lambda_{\alpha}(x) < \lambda_{\alpha}(y)$ or $\lambda_{\beta}(x-a) < \lambda_{\beta}(y-a)$. 

Note that $\lambda_{\pi}$ sends every $\ell \in [a]$ to $b + \lambda_{\alpha}(\ell)$ and every $a+\ell$ for $\ell \in [b]$ to $\lambda_{\beta}(\ell)$. Hence, when $x$ and $y$ are in $\alpha$ (resp.~$\beta$) we deduce from $\lambda_{\alpha}(x) < \lambda_{\alpha}(y)$ (resp.~$\lambda_{\beta}(x-a) < \lambda_{\beta}(y-a)$) that $\lambda_{\pi}(x) < \lambda_{\pi}(y)$, concluding the proof. 
\end{proof}

\subsection{The main argument}

In this section we prove the main result. Recall that $\A$ is an operator formed by some composition of $\S$ and $\R$. For any such operator, we shall write $\pi \in \A$ to denote that $\pi$ is in the image of $\A$.

As above, for any $\pi \in \Av(231)$, we consider $\lambda_{\pi}$ as a relabeling of the elements of $[n]$.
We extend its effect to permutations, trees etc.~that carry labels from $[n]$: applying $\lambda_{\pi}$ to such an object will simply mean to apply $\lambda_{\pi}$ to each of its labels.

\begin{definition} 
We define a function $\Phi_{\A}$ from the set of permutations sorted by $\S \circ \A$ to the set of all permutations as follows. For $\theta$ a permutation sorted by $\S \circ \A$, since $\A(\theta) \in \Av(231)$, we have $\lambda_{\A(\theta)}$ defined by $\P(\A(\theta)) = \lambda_{\A(\theta)} \circ \A(\theta)$ and we then set $\Phi_{\A}(\theta)= \lambda_{\A(\theta)} \circ \theta$.
\end{definition} 

In other words $\Phi_{\A}$ relabels a permutation $\theta$ sorted by $\S \circ \A$ in the same way that $\A(\theta)$ is relabeled to produce $P(\A(\theta))$. We will prove (see Corollary~\ref{COR:bijection}) that $\Phi_{\A}$ is a bijection from the set of permutations sorted by $\S \circ \A$ to the set of those sorted by $\S \circ \R \circ \A$. The key to this argument of course is to establish that $\A(\Phi_{\A}(\theta)) = P(\A(\theta))$.

\begin{definition}
An operator $\A$ which is a composition of $\S$ and $\R$ \emph{respects $\P$} if it has the following property (illustrated in Figure~\ref{FIG:A_respects_P}):

For each $\pi \in \Av(231) \cap \A$, 
\begin{itemize}
\item
For each $\theta$ such that $\A(\theta) = \pi$, we have $ \A(\Phi_{\A}(\theta)) = \P(\pi) = \lambda_{\pi} \circ \pi$ and \\ $\InTree(\Phi_{\A}(\theta)) = \lambda_{\pi}(\InTree(\theta))$, and
\item
the correspondence $\Phi_{\A}: \theta \mapsto \Phi_{\A}(\theta)$ is a bijection between $\A^{-1}(\pi)$ and $\A^{-1}(\P(\pi))$.
\end{itemize}
\end{definition}

In the above, notice that because $\A(\theta) = \pi$ we actually have $\Phi_{\A}(\theta) = \lambda_{\pi} \circ \theta$.

\begin{figure}[ht]
\begin{center}
\begin{tikzpicture}
\begin{scope}[scale=1]
\node at (-0.3,1) {\small $\{12\dots n\}$};
\node at (1.1,1) {$\xleftarrow{\hspace*{1.2cm}}$};
\node at (1.2,1.3) {$\S$};
\filldraw[fill=gray!15!white, draw=black] (4,1) arc (0:360:1cm and 0.4cm);
\node at (3,1.7) {\small {$\Av(231) \cap \A$}};
\node[anchor=east] at (3.9,1) {{\small $\pi$}};
\filldraw[fill=gray!15!white, draw=black] (7.6,1) arc (0:360:1.3cm and 0.45cm);
\node at (4.5,1) {$\xleftarrow{\hspace*{1.2cm}}$};
\node at (4.4,1.3) {$\A$};
\node[anchor=west] at (5.1,1) {\small {$\theta$}};
\node at (6.3,1.7) {\small {$\A^{-1}(\pi)$}};

\node at (-0.3,0) {\small $\{12\dots n\}$};
\node at (0.75,0) {$\xleftarrow{\hspace*{0.5cm}}$};
\node at (0.75,0.3) {$\S$};
\node at (1.6,0) {$\xleftarrow{\hspace*{0.5cm}}$};
\node at (1.6,0.3) {$\R$};
\filldraw[fill=gray!15!white, draw=black] (4,0) arc (0:360:1cm and 0.4cm);
\node at (3,-0.7) {\small {$\Av(132) \cap \A$}};
\node[anchor=east] at (3.9,0) {{\small $\P(\pi)$ \hspace*{-0.7em} $=$ \hspace*{-0.7em} $\lambda_{\pi}$ \hspace*{-0.7em} $\circ$ \hspace*{-0.7em} $\pi$}};
\filldraw[fill=gray!15!white, draw=black] (7.6,0) arc (0:360:1.3cm and 0.45cm);
\node[anchor=west] at (5.1,0) {{\small $\Phi_{\A}(\theta) = \lambda_{\pi} \circ \theta$}};
\node at (4.5,0) {$\xleftarrow{\hspace*{1.2cm}}$};
\node at (4.4,0.3) {$\A$};
\node at (6.3,-0.7) {\small {$\A^{-1}(\P(\pi))$}};

\draw[<->] (7.7,1) .. controls +(right:1cm) and +(right:1cm) .. (7.7,0);
\node at (9.1,0.7) {\small {$\Phi_{\A}$ is a }};
\node at (9.1,0.3) {\small {bijection}};
\end{scope}
\end{tikzpicture}
\end{center}
\caption{Overview of an operator $\A$ that respects $\P$. \label{FIG:A_respects_P}}
\end{figure}
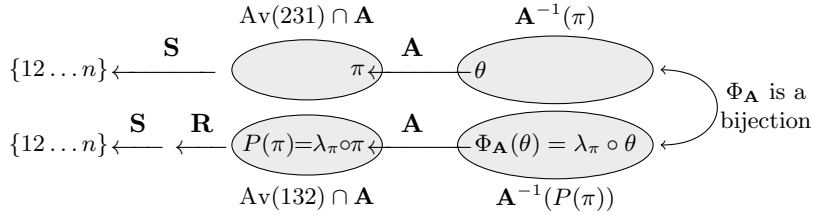

\begin{proposition}
If $\A$ respects $\P$ then so does $\A \circ \R$.
\label{PROP:AoR_respects_P}
\end{proposition}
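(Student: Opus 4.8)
The statement to prove is that if $\A$ respects $\P$, then so does $\A \circ \R$. The plan is to unfold the definition of "respects $\P$" for the operator $\A \circ \R$ and reduce each of its two requirements to the corresponding fact for $\A$, exploiting that $\R$ is a size-preserving involution which merely reverses the one-line notation. The first thing I would do is fix $\pi \in \Av(231) \cap (\A \circ \R)$ and observe that, since $\R$ is a bijection on all permutations, $\pi \in \A$ as well, so we may apply the hypothesis to $\A$ at $\pi$. The crucial bookkeeping point is that $(\A \circ \R)^{-1}(\pi) = \R^{-1}(\A^{-1}(\pi)) = \R(\A^{-1}(\pi))$: a permutation $\theta$ satisfies $(\A\circ\R)(\theta)=\pi$ iff $\R(\theta) \in \A^{-1}(\pi)$, i.e.\ iff $\theta = \R(\theta')$ for some $\theta' \in \A^{-1}(\pi)$.

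Next I would check that $\Phi_{\A\circ\R}$ is compatible with $\Phi_{\A}$ via $\R$. By definition, for $\theta$ sorted by $\S \circ \A \circ \R$, we have $\Phi_{\A\circ\R}(\theta) = \lambda_{(\A\circ\R)(\theta)} \circ \theta = \lambda_{\pi} \circ \theta$, where $\pi = (\A\circ\R)(\theta) = \A(\R(\theta))$. Writing $\theta' = \R(\theta)$, this is $\lambda_{\pi}\circ \R^{-1}(\theta')$; meanwhile $\Phi_{\A}(\theta') = \lambda_{\pi}\circ\theta'$ since $\A(\theta')=\pi$. Because applying $\lambda_\pi$ means relabelling values and $\R$ only permutes positions, relabelling commutes with reversal: $\lambda_\pi \circ \R(\sigma) = \R(\lambda_\pi\circ\sigma)$ for any $\sigma$. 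Hence $\Phi_{\A\circ\R}(\theta) = \R(\Phi_{\A}(\theta'))$, i.e.\ $\Phi_{\A\circ\R} = \R \circ \Phi_{\A} \circ \R$ on the relevant domain. From here the two requirements follow: for the first bullet, $(\A\circ\R)(\Phi_{\A\circ\R}(\theta)) = \A(\R(\R(\Phi_{\A}(\theta')))) = \A(\Phi_{\A}(\theta')) = \P(\pi) = \lambda_\pi\circ\pi$ by the hypothesis applied to $\theta' \in \A^{-1}(\pi)$; and for the in-order tree condition, I would note that $\InTree(\Phi_{\A\circ\R}(\theta)) = \InTree(\R(\Phi_{\A}(\theta')))$, and then argue that what we need is $\InTree(\Phi_{\A\circ\R}(\theta)) = \lambda_\pi(\InTree(\theta))$. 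Since $\theta = \R(\theta')$, and the hypothesis gives $\InTree(\Phi_{\A}(\theta')) = \lambda_\pi(\InTree(\theta'))$, it suffices to check that the operation $\sigma \mapsto \InTree(\R(\sigma))$ and $\sigma\mapsto \InTree(\sigma)$ are related by the same rule before and after relabelling — i.e.\ that $\InTree(\R(\sigma))$ is obtained from $\InTree(\sigma)$ by an unlabelled-tree transformation (left-right mirroring is the natural guess, though one should verify it is the correct one for decreasing trees) that commutes with relabelling values; combined with $\lambda_\pi(\InTree(\theta')) = \InTree(\Phi_\A(\theta'))$ this yields the claim.

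For the second bullet, I would argue that $\Phi_{\A\circ\R} = \R\circ\Phi_{\A}\circ\R$ is a bijection between $(\A\circ\R)^{-1}(\pi)$ and $(\A\circ\R)^{-1}(\P(\pi))$: indeed $\R$ is a bijection $(\A\circ\R)^{-1}(\pi) \to \A^{-1}(\pi)$, then $\Phi_{\A}$ is a bijection $\A^{-1}(\pi)\to\A^{-1}(\P(\pi))$ by hypothesis, and then $\R$ is a bijection $\A^{-1}(\P(\pi)) \to (\A\circ\R)^{-1}(\P(\pi))$; composing gives the result. The main obstacle I anticipate is the in-order tree condition: one must pin down precisely how $\InTree$ behaves under $\R$ (it is not literally "reverse the reading", since $\InTree$ is built around the maximum, so reversing the word swaps the roles of the two subtrees recursively) and then confirm that this combinatorial operation on unlabelled trees is intertwined correctly with the value-relabelling $\lambda_\pi$. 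Everything else is routine unwinding of definitions together with the commutation of value-relabelling with position-reversal.
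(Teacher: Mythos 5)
Your proposal is correct and follows essentially the same route as the paper: establish $\Phi_{\A\circ\R}=\R\circ\Phi_{\A}\circ\R$ from the fact that the value-relabelling $\lambda_\pi$ commutes with the position-reversal $\R$, deduce the first condition directly, handle the tree condition via the observation that $\R$ acts on in-order trees by recursively swapping left and right subtrees (which indeed commutes with relabelling, as you suspected), and obtain the bijectivity as a composition of the three bijections $\R$, $\Phi_{\A}$, $\R$. The only point you left tentative --- that $\InTree(\R(\sigma))$ is the left-right mirror of $\InTree(\sigma)$ --- is immediate from the recursive definition of $\InTree$, since $\R(\alpha n\beta)=\R(\beta)\,n\,\R(\alpha)$.
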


\begin{proof}
Let $\pi \in \Av(231) \cap (\A \circ \R)$ and $\theta$ be such that $(\A \circ \R)(\theta) = \pi$. Let $\tau = \R(\theta)$. Then $\A(\tau) = \pi$ and since $\A$ respects $\P$, $\A(\Phi_{\A}(\tau)) = \P(\pi)$ and $\InTree(\Phi_{\A}(\tau)) = \lambda_{\pi}(\InTree(\tau))$. 

Because $\R$ is an involution on permutations that acts only on positions whereas $\lambda_{\pi}$ acts on values only, we prove that $\R\big(\Phi_{\A \circ \R}(\theta)\big) = \Phi_{\A}(\tau)$. Indeed, for any $i \in [n]$, we have:
\[
\R\big(\Phi_{\A \circ \R}(\theta)\big) (i) = \Phi_{\A \circ \R}(\theta) (n+1-i) = \lambda_{\A \circ \R(\theta)}\big(\theta(n+1-i)\big) = \lambda_{\A(\tau)}\big(\tau(i)\big) = \Phi_{\A}(\tau)(i).
\]
It follows that $(\A \circ \R)\left( \Phi_{\A \circ \R}(\theta) \right) = \A(\Phi_{\A}(\tau)) = \P(\pi)$.

Moreover, applying $\R$ to a permutation is equivalent to recursively exchanging left and right subtrees in its in-order tree. Because we have  $\R \left( \Phi_{\A}(\tau) \right) = \Phi_{\A \circ \R}(\theta)$ and $\R(\tau) = \theta$, we deduce from $\InTree(\Phi_{\A}(\tau)) = \lambda_{\pi}(\InTree(\tau))$ that $\InTree(\Phi_{\A \circ \R}(\theta)) = \lambda_{\pi}(\InTree(\theta))$. 

Finally, the correspondence $\Phi_{\A \circ \R}: \theta \mapsto \Phi_{\A \circ \R}(\theta)$ is a composition of three bijections: first $\R$ from $(\A \circ \R)^{-1}(\pi)$ to $\A^{-1}(\pi)$, then $\Phi_{\A}$ from $\A^{-1}(\pi)$ to $\A^{-1}(\P(\pi))$ since $\A$ respects $\P$, and last $\R^{-1}=\R$ again from $\A^{-1}(\P(\pi))$ to $( \A \circ \R)^{-1}(\P(\pi))$. This proves that $\Phi_{\A \circ \R}$ is a bijection between $(\A \circ \R)^{-1}(\pi)$ and $( \A \circ \R)^{-1}(\P(\pi))$ and concludes the proof that $\A \circ \R$ respects $\P$.
\end{proof}

\begin{proposition}
If $\A$ respects $\P$ then so does $\A \circ \S$.
\label{PROP:AoS_respects_P}
\end{proposition}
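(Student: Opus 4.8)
The plan is to run the same scheme as in the proof of Proposition~\ref{PROP:AoR_respects_P}. Fix $\pi\in\Av(231)\cap(\A\circ\S)$ and a $\theta$ with $(\A\circ\S)(\theta)=\pi$, and put $\sigma=\S(\theta)$, so that $\sigma\in\A^{-1}(\pi)$ and, because $\A$ respects $\P$, $\A(\Phi_{\A}(\sigma))=\P(\pi)$, $\InTree(\Phi_{\A}(\sigma))=\lambda_{\pi}(\InTree(\sigma))$, and $\Phi_{\A}$ is a bijection $\A^{-1}(\pi)\to\A^{-1}(\P(\pi))$. Since $(\A\circ\S)(\theta)=\pi$ we have $\Phi_{\A\circ\S}(\theta)=\lambda_{\pi}\circ\theta$, and likewise $\Phi_{\A}(\sigma)=\lambda_{\pi}\circ\sigma$. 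The entire proof comes down to the single identity $\S(\Phi_{\A\circ\S}(\theta))=\Phi_{\A}(\sigma)$ (i.e. $\S\circ\Phi_{\A\circ\S}=\Phi_{\A}\circ\S$ on $(\A\circ\S)^{-1}(\pi)$), together with a bookkeeping argument that this makes $\Phi_{\A\circ\S}$ a bijection.

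\textbf{The key lemma and the first bullet.} The crux is: \emph{whenever $(\A\circ\S)(\theta)=\pi$, the relabeling $\lambda_{\pi}$ preserves the parent--child order of $\InTree(\theta)$}. Indeed, if $b$ is a child of $a$ in $\InTree(\theta)$ then $b<a$ and, by the recursive description of in-order trees, no entry larger than $a=\max(a,b)$ lies between $a$ and $b$ in $\theta$; applying Observation~\ref{OBS:SR} first to $\S$ and then to $\A$, the same holds in $\A(\S(\theta))=\pi$, and since $\pi\in\Av(231)$, Observation~\ref{OBS:P} (with its two arguments interchanged) gives $\lambda_{\pi}(b)<\lambda_{\pi}(a)$. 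Hence $\lambda_{\pi}(\InTree(\theta))$ is again a decreasing binary tree; its in-order reading is $\lambda_{\pi}\circ\theta=\Phi_{\A\circ\S}(\theta)$, so by uniqueness of in-order trees $\InTree(\Phi_{\A\circ\S}(\theta))=\lambda_{\pi}(\InTree(\theta))$ --- the tree part of the first bullet. Taking post-order readings, $\S(\Phi_{\A\circ\S}(\theta))=\post(\lambda_{\pi}(\InTree(\theta)))=\lambda_{\pi}(\post(\InTree(\theta)))=\lambda_{\pi}\circ\sigma=\Phi_{\A}(\sigma)$, whence $(\A\circ\S)(\Phi_{\A\circ\S}(\theta))=\A(\Phi_{\A}(\sigma))=\P(\pi)$, the permutation part of the first bullet.

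\textbf{The bijection.} Write $(\A\circ\S)^{-1}(\pi)=\bigsqcup_{\sigma\in\A^{-1}(\pi)}\S^{-1}(\sigma)$ and, reindexing by the bijection $\Phi_{\A}$, $(\A\circ\S)^{-1}(\P(\pi))=\bigsqcup_{\sigma\in\A^{-1}(\pi)}\S^{-1}(\Phi_{\A}(\sigma))$. By $\S\circ\Phi_{\A\circ\S}=\Phi_{\A}\circ\S$, the map $\Phi_{\A\circ\S}$ (which is $\theta\mapsto\lambda_{\pi}\circ\theta$, hence injective) sends each block $\S^{-1}(\sigma)$ into $\S^{-1}(\Phi_{\A}(\sigma))$, so it suffices to show each such restriction is onto. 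For $\sigma\in\mathrm{Im}(\S)$ I would prove $\lambda_{\pi}(\canT_{\sigma})=\canT_{\Phi_{\A}(\sigma)}$: it is decreasing (apply the key lemma to the in-order reading of $\canT_{\sigma}$) with post-order $\lambda_{\pi}\circ\sigma=\Phi_{\A}(\sigma)$, and it is canonical because if $z$ has left child $x$ and $y$ is the leftmost value in the right subtree of $z$, then in $\sigma=\post(\canT_{\sigma})$ the entries lying between $x$ and $y$ are exactly the vertices of the right subtree of $y$, all smaller than $y<x$; so Observations~\ref{OBS:SR} and~\ref{OBS:P} yield $\lambda_{\pi}(y)<\lambda_{\pi}(x)$. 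By Proposition~\ref{PROP:unique_canonical_tree} this forces $\lambda_{\pi}(\canT_{\sigma})=\canT_{\Phi_{\A}(\sigma)}$, so in particular $\canT_{\sigma}$ and $\canT_{\Phi_{\A}(\sigma)}$ have the same shape and $|\S^{-1}(\sigma)|=|\S^{-1}(\Phi_{\A}(\sigma))|$; an injection between finite sets of equal size is onto. (Alternatively: the operations of Proposition~\ref{PROP:all_preimages_from_canonical_tree} refer only to the shape of the tree, so $\lambda_{\pi}$ conjugates them and carries the decreasing trees of post-order $\sigma$ bijectively onto those of post-order $\Phi_{\A}(\sigma)$.) The remaining case $\sigma\notin\mathrm{Im}(\S)$, where $\S^{-1}(\sigma)=\emptyset$, requires checking $\S^{-1}(\Phi_{\A}(\sigma))=\emptyset$ as well; this follows from the $\Av(132)$-side version of Observation~\ref{OBS:P} (obtained from it by reversal once one notes $\P^{-1}=\R\circ\P\circ\R$) applied to $\lambda_{\pi}^{-1}$, which rules out $\Phi_{\A}(\sigma)\in\mathrm{Im}(\S)$. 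Patching the blocks together gives the bijection.

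\textbf{Main obstacle.} The first bullet is routine once the key lemma is isolated. The genuinely delicate point is the fibrewise bijection, and inside it the claim $\lambda_{\pi}(\canT_{\sigma})=\canT_{\Phi_{\A}(\sigma)}$: the identity $\S\circ\Phi_{\A\circ\S}=\Phi_{\A}\circ\S$ only tells us that $\lambda_{\pi}$ turns decreasing trees with post-order $\sigma$ into decreasing trees with post-order $\Phi_{\A}(\sigma)$, not that it surjects onto them, and it is precisely the canonicity of $\lambda_{\pi}(\canT_{\sigma})$ --- which rests on the observation about which entries of $\sigma$ separate a left child from the leftmost value of the corresponding right subtree --- that closes the gap; the degenerate case $\sigma\notin\mathrm{Im}(\S)$ is what makes it necessary to invoke the reversal-symmetric form of Observation~\ref{OBS:P}.
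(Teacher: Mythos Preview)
Your proof is correct and follows essentially the same strategy as the paper's: both rely on Observations~\ref{OBS:SR} and~\ref{OBS:P} to show that the relabeling $\lambda_\pi$ preserves decreasingness, and both use the canonical-tree machinery of Propositions~\ref{PROP:unique_canonical_tree} and~\ref{PROP:all_preimages_from_canonical_tree} to obtain the blockwise bijection $\S^{-1}(\sigma)\to\S^{-1}(\Phi_{\A}(\sigma))$.

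There are two organizational differences worth noting. First, for the first bullet the paper works through the canonical tree $\canT_\tau$ (proving it stays decreasing and canonical under $\lambda_\pi$, then transporting the operations~$(\star)$), whereas you argue directly on $\InTree(\theta)$ via your ``key lemma''; this is shorter and cleaner, since canonicity is not needed until the bijection step. Second, you explicitly treat the degenerate case $\sigma\notin\mathrm{Im}(\S)$ and correctly derive the $\Av(132)$-side analogue of Observation~\ref{OBS:P} from $\P^{-1}=\R\circ\P\circ\R$; the paper's proof glosses over this case when it asserts that $\theta\mapsto\theta'$ is a bijection $\S^{-1}(\tau)\to\S^{-1}(\tau')$ for \emph{every} $\tau\in\A^{-1}(\pi)$. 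As a side remark, your description of the entries between $x$ and $y$ in $\post(\canT_\sigma)$ as the right subtree of $y$ is the accurate statement---the paper's phrase ``$x$ occurs immediately before $y$'' is not literally true in general (the paper's own example with $z=9$, $x=8$, $y=6$ shows this), though the conclusion still holds since those intervening entries are all below $y$.
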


\begin{proof}
In this case, the argument is a little more involved. 

Let $\pi \in \Av(231) \cap (\A \circ \S)$. Throughout the proof, let us denote the relabeling accomplished by $\lambda_\pi$ by a primed symbol, i.e.~$w'$ represents the effect of relabeling $w$ by $\lambda_\pi$ for any entity $w$. For instance, $\P(\pi) = \lambda_\pi \circ \pi = \pi'$.

Consider $\theta \in (\A \circ \S)^{-1}(\pi)$, and define $\tau =\S(\theta)$. Then, we have $\tau \in \S$ and $\A(\tau) = \pi$. Because $\A$ respects $\P$, we have $\A(\tau') = \pi'$ and $\InTree(\tau') = \InTree(\tau)'$. We prove in the following that $\tau' \in \S$, $\S(\theta') = \tau'$ and $\InTree(\theta') = \InTree(\theta)'$ (see Claims $1$, $4$ and $3$ below). 

\smallskip

Because $\tau \in \S$, we may consider the canonical tree $T= \canT_{\tau}$ associated with $\tau$. Consider also its relabeling by $\lambda_\pi$, denoted $T'$. Of course, because $\post(T) = \tau$, we have $\post(T') = \tau'$. This will prove the first condition of $\A \circ \S$ respecting $\P$. 

\emph{Claim $1$:} $T'$ is decreasing. \\
It then follows from $\post(T') = \tau'$ and Corollary~\ref{COR:post_in_S} that $\tau' \in \S$. 

\emph{Proof of Claim $1$:} Consider any edge from a parent $b$ to a child $a$ in $T$, and the corresponding edge from $b'$ to $a'$ in $T'$. Because $T$ is decreasing, $a<b$ and all the elements that occur between $a$ and $b$ in $\tau = \post(T)$ are less than $b$. By Observation \ref{OBS:SR} all the elements between $a$ and $b$ in $\pi = \A(\tau)$ are less than $b$. Hence by Observation~\ref{OBS:P}, $b' > a'$, proving that $T'$ is decreasing.  

\emph{Claim $2$:} $T'$ is canonical. \\
It then follows from $\post(T') = \tau'$ and Proposition~\ref{PROP:unique_canonical_tree} that $T'=\canT_{\tau'}$ (the unique canonical tree associated with $\tau'$). 

\emph{Proof of Claim $2$:} Let $x$ be the left child of some vertex $z$ in $T$ and let $y$ be the leftmost element of the right subtree of $z$. Then $x > y$ since $T$ is canonical. Let $x'$ and $y'$ occupy the corresponding positions in $T'$. Since $x$ occurs immediately before $y$ in $\tau = \post(T)$,  $x' > y'$ (by Observations \ref{OBS:SR} and \ref{OBS:P}, as in the proof of Claim $1$).

\emph{Claim $3$:} $\InTree(\theta') = \InTree(\theta)'$.

\emph{Proof of Claim $3$:} 
Recall that $\S(\theta) = \tau$. By Proposition~\ref{PROP:all_preimages_from_canonical_tree}, $\InTree(\theta)$ is obtained from $T$ by applying some sequence of operations of the form:
\[ (\star) \left\{ 
\text{\begin{minipage}{0.95\textwidth}
Take a vertex $z$ with no left child, and one of its descendants $y$ on the leftmost branch of its right subtree. Remove the subtree rooted at $y$ and make it the left subtree of $z$.
\end{minipage}}
\right.
\]
Applying the same sequence of operations to $T'$ creates a tree with the same underlying structure as $\InTree(\theta)$, but with the labels arising from $T'$: this is the tree $\InTree(\theta)'$ and its in-order reading is $\inreading(\InTree(\theta)') = \theta'$. Because $T'$ is decreasing and since the operations $(\star)$ cannot create an increasing pair, $\InTree(\theta)'$ is a decreasing tree. Observation~\ref{OBS:Tin} then ensures that $\InTree(\theta') = \InTree(\theta)'$.

\emph{Claim $4$:} $\S(\theta') = \tau'$.

\emph{Proof of Claim $4$:} 
From Claim $3$, we know that $\InTree(\theta')$ is obtained from $T'$ by a sequence of operations $(\star)$. Moreover, from Claim $2$, $T'$ is the canonical tree of $\tau'$. Therefore, Proposition~\ref{PROP:all_preimages_from_canonical_tree} ensures that $\post(\InTree(\theta'))=\tau'$. Hence with Observation~\ref{OBS:stack_sorting_on_trees}, we deduce that $\S(\theta')=\post(\InTree(\theta'))=\tau'$.

\smallskip

To conclude the proof that $\A \circ \S$ respects $\P$, it remains to show that $\Phi_{\A \circ \S}: \theta \mapsto \theta'$ is a bijection between $(\A \circ \S)^{-1}(\pi)$ and $(\A \circ \S)^{-1}(\P(\pi)) = (\A \circ \S)^{-1}(\pi')$.

First, we claim that, for every $\tau \in \A^{-1}(\pi)$, the correspondence $\theta \mapsto \theta'$ is a bijective map between $\S^{-1}(\tau)$ and $\S^{-1}(\tau')$. 
This follows from Proposition~\ref{PROP:all_preimages_from_canonical_tree} (together with Observations~\ref{OBS:Tin} and~\ref{OBS:stack_sorting_on_trees}), as in the proofs of Claims $3$ and $4$ above. Details may also be found in the proof of Proposition~2.7 of~\cite{Bousquet:Sorted}.

Second, the set $(\A \circ \S)^{-1}(\pi)$ (resp. $(\A \circ \S)^{-1}(\pi')$) may be partitioned into the disjoint union of the sets $\S^{-1}(\tau)$ for $\tau \in \A^{-1}(\pi)$ (resp. $\S^{-1}(\tau')$ for $\tau' \in \A^{-1}(\pi')$). 
Because $A$ respects $\P$, the correspondence $\Phi_\A: \tau \mapsto \tau'$ is a bijection between $\A^{-1}(\pi)$ and $\A^{-1}(\pi')$. 

Hence the complete correspondence $\Phi_{\A \circ \S}: \theta \mapsto \theta'$ from $(\A \circ \S)^{-1}(\pi)$ to $(\A \circ \S)^{-1}(\pi')$ is a bijection, and $P$ respects $\A \circ \S$.
\end{proof}

Finally, let us observe that:
\begin{observation}
The identity operator respects $\P$.
\label{OBS:identity}
\end{observation}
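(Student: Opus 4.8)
The plan is to unwind the definition of ``respects $\P$'' for the identity operator $\A = \mathrm{id}$ and check each bullet directly. When $\A$ is the identity, the set of permutations sorted by $\S \circ \A = \S$ is exactly $\Av(231)$, and for any $\pi \in \Av(231) \cap \A = \Av(231)$ the only $\theta$ with $\A(\theta) = \pi$ is $\theta = \pi$ itself; moreover $\lambda_\pi$ is defined by $\P(\pi) = \lambda_\pi \circ \pi$ and $\Phi_{\mathrm{id}}(\pi) = \lambda_\pi \circ \pi = \P(\pi)$. So the first required identity, $\A(\Phi_{\A}(\theta)) = \P(\pi)$, becomes $\P(\pi) = \P(\pi)$, which is trivially true, and the bijectivity bullet is trivial as well since $\A^{-1}(\pi) = \{\pi\}$ and $\A^{-1}(\P(\pi)) = \{\P(\pi)\}$ and $\Phi_{\mathrm{id}}$ maps one singleton onto the other.

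The one genuinely non-trivial thing to establish is the tree condition $\InTree(\Phi_{\A}(\theta)) = \lambda_\pi(\InTree(\theta))$, which for $\A = \mathrm{id}$ reads $\InTree(\P(\pi)) = \lambda_\pi(\InTree(\pi))$. This is precisely the stronger statement flagged earlier in the paper (``$\lambda_\pi(\InTree(\pi)) = \InTree(\P(\pi))$'' holds in general, but ``the proof requires a bit more work''). I would prove it by induction on $|\pi|$ following the recursive decomposition $\pi = \alpha \oplus (1 \ominus \beta)$ with $\alpha,\beta \in \Av(231)$, exactly mirroring the recursion defining $\P$ and $\InTree$. The base case $\pi = \varepsilon$ (or $\pi = 1$) is immediate. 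For the inductive step, write $a = |\alpha|$, $b = |\beta|$, so $|\pi| = a + b + 1 = n$; by Observation~\ref{OBS:P_preserves_Tin} the tree $\InTree(\pi)$ has root $n$, left subtree $\InTree(\alpha)$ (on the smallest $a$ values) and right subtree $\InTree(1 \ominus \beta)$, which itself has root the value $a+1$ and right subtree $\InTree(\beta)$ (on values $a+2,\dots,n-1$) and empty left subtree. On the $\P$ side, $\P(\pi) = (\P(\alpha) \oplus 1) \ominus \P(\beta)$, whose in-order tree has root $n$, left subtree the in-order tree of $\P(\alpha) \oplus 1$ (root value $n-1$, with $\InTree(\P(\alpha))$ as its left subtree on values $b+1,\dots,n-2$ — wait, I should recompute the value ranges carefully — and empty right subtree), and right subtree $\InTree(\P(\beta))$ on values $1,\dots,b$.

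The heart of the argument is then a bookkeeping check that the relabeling $\lambda_\pi$ carries the labels of $\InTree(\pi)$ to the labels of $\InTree(\P(\pi))$ position-by-position. Here I would use the formula for $\lambda_\pi$ already extracted in the proof of Observation~\ref{OBS:P}: $\lambda_\pi$ sends every $\ell \in [a]$ to $b + \lambda_\alpha(\ell)$ and every $a + \ell$ for $\ell \in [b]$ to $\lambda_\beta(\ell)$, and fixes $n$. Combined with the inductive hypotheses $\InTree(\P(\alpha)) = \lambda_\alpha(\InTree(\alpha))$ and $\InTree(\P(\beta)) = \lambda_\beta(\InTree(\beta))$, this shows: the root label $n$ is fixed (matching); the left subtree $\InTree(\alpha)$, whose labels lie in $[a]$, is relabeled to $\lambda_\alpha(\InTree(\alpha)) + b = \InTree(\P(\alpha)) + b$, which is exactly the left subtree of the in-order tree of $\P(\alpha) \oplus 1$; the middle vertex carrying value $a+1$ is sent to $\lambda_\beta$-image $\ldots$ no, $a+1$ is the ``$1$'' in $1 \ominus \beta$, so it is the value $a+1$, handled by the ``$a+\ell$ with $\ell = 1$'' case and sent to $\lambda_\beta(1)$ — I need $\lambda_\beta(1)$ to be the label occupying the node that in $\P(\pi)$'s tree sits where the ``$\oplus 1$'' node is, i.e.\ value $n-1$; since $\P(\beta)$ is $132$-avoiding its in-order tree's leftmost... this is where the careful index-chase happens, and it is the main obstacle: matching up the distinguished ``$1$'' vertex of $1 \ominus \beta$ on the $\pi$ side with the ``$\oplus 1$'' vertex on the $\P(\pi)$ side under $\lambda_\pi$, and the values of $\beta$ (shifted by $a$) with the values of $\P(\beta)$. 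Everything else is routine once this correspondence of the three blocks (left block, singleton, right block) and their value-ranges is set up correctly, and the decreasing-tree uniqueness of Observation~\ref{OBS:Tin} lets one conclude $\InTree(\theta') = \InTree(\theta)'$ rather than merely ``same shape''. With that, all three bullets of ``respects $\P$'' hold for the identity, completing the proof.
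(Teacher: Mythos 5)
Your reduction of the observation to the single identity $\InTree(\P(\pi)) = \lambda_\pi(\InTree(\pi))$ is correct (the other two bullets really are trivial for the identity operator), but your proof of that identity is not complete: you stop at what you yourself call ``the main obstacle,'' and that obstacle is an artifact of a misreading of the decomposition. In $\pi = \alpha \oplus (1 \ominus \beta)$ with $|\alpha| = a$ and $|\beta| = b$, the distinguished ``$1$'' of $1 \ominus \beta$ receives the value $a + (b+1) = n$: it \emph{is} the maximum of $\pi$ and hence the root of $\InTree(\pi)$, not a separate ``middle vertex carrying value $a+1$.'' The two subtrees of the root are simply $\InTree(\alpha)$ (labels in $[a]$) and $\InTree(\beta)+a$. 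Likewise $\P(\pi) = (\P(\alpha)+b)\cdot n \cdot \P(\beta)$ as a word, so the ``$\oplus 1$'' vertex is the root of $\InTree(\P(\pi))$, with left subtree $\InTree(\P(\alpha))+b$ and right subtree $\InTree(\P(\beta))$ --- not ``the in-order tree of $\P(\alpha)\oplus 1$ with root value $n-1$.'' Once the three blocks are identified correctly there is no leftover vertex to match by hand: $\lambda_\pi$ fixes $n$, sends $\ell\in[a]$ to $b+\lambda_\alpha(\ell)$ and $a+\ell$ to $\lambda_\beta(\ell)$, so the two inductive hypotheses close the step immediately. Your induction is therefore repairable, but as written the decisive verification is missing and the surrounding bookkeeping is wrong.

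For comparison, the paper dispenses with the induction entirely. If $a$ is a child of $b$ in $\InTree(\pi)$, every value between $a$ and $b$ in the in-order reading $\pi$ is a descendant of $a$ or $b$, hence smaller than $b$; Observation~\ref{OBS:P} then gives $\lambda_\pi(a) < \lambda_\pi(b)$, so $\lambda_\pi(\InTree(\pi))$ is again a decreasing tree. Its in-order reading is $\lambda_\pi\circ\pi = \P(\pi)$, since relabeling moves no vertex, and the uniqueness statement of Observation~\ref{OBS:Tin} forces $\lambda_\pi(\InTree(\pi)) = \InTree(\P(\pi))$. This argument (decreasing, plus correct in-order reading, plus uniqueness) is both shorter and reusable: it is exactly the pattern invoked again in Claim~3 of the proof of Proposition~\ref{PROP:AoS_respects_P}, where an inductive block decomposition would no longer be available.
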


\begin{proof}
When $\A$ is the identity, we have $\Phi_\A = \P$. So to show that the identity fulfills the definition of respecting $\P$, the only thing to prove it that $\InTree(\P(\pi)) = \lambda_\pi(\InTree(\pi))$ for any $\pi \in \Av(231)$. 
Because $\InTree(\pi)$ is decreasing, we deduce from Observation~\ref{OBS:P} that $\lambda_\pi(\InTree(\pi))$ is also decreasing. 
Moreover, the in-order reading of $\lambda_\pi(\InTree(\pi))$ is $\lambda_\pi \circ \pi = \P(\pi)$, since the one of $\InTree(\pi)$ is $\pi$. 
Observation~\ref{OBS:Tin} then gives $\lambda_\pi(\InTree(\pi)) = \InTree(\P(\pi))$. 
\end{proof}

Putting together Propositions~\ref{PROP:AoR_respects_P} and~\ref{PROP:AoS_respects_P} and Observation~\ref{OBS:identity}, we obtain our main theorem:

\begin{theorem}\label{THM:respects}
Every operator that is formed by composition from $\{\S, \R\}$ respects $\P$.
\end{theorem}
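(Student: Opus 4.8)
The plan is to prove Theorem~\ref{THM:respects} by structural induction on the way $\A$ is built up from $\S$ and $\R$. Any operator formed by composition from $\{\S, \R\}$ is either the identity operator, or of the form $\A' \circ \R$ for some shorter composition $\A'$, or of the form $\A' \circ \S$ for some shorter composition $\A'$ --- here I read the composition from the outside in, peeling off the operator that is applied \emph{first} to the input permutation. This gives exactly three cases, and each has already been handled: the base case is Observation~\ref{OBS:identity}, the case $\A = \A' \circ \R$ is Proposition~\ref{PROP:AoR_respects_P}, and the case $\A = \A' \circ \S$ is Proposition~\ref{PROP:AoS_respects_P}.

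So the proof itself is short: set up the induction on the length $k$ of a fixed expression for $\A$ as a word in $\S$ and $\R$. For $k = 0$, $\A$ is the identity and Observation~\ref{OBS:identity} applies. For $k \geq 1$, write $\A = \A' \circ X$ where $X \in \{\S, \R\}$ and $\A'$ is expressed by a word of length $k-1$; by the induction hypothesis $\A'$ respects $\P$, and then Proposition~\ref{PROP:AoR_respects_P} (if $X = \R$) or Proposition~\ref{PROP:AoS_respects_P} (if $X = \S$) shows that $\A = \A' \circ X$ respects $\P$. This completes the induction. One small point worth stating explicitly is that ``respects $\P$'' is a property of the operator as a function on permutations, not of a particular syntactic expression for it, so it does not matter that a given operator may admit several expressions as a word in $\S$ and $\R$; the induction merely needs \emph{one} such expression to drive the recursion.

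The only genuine work, therefore, has already been done in the three building blocks, and in particular in Proposition~\ref{PROP:AoS_respects_P}, whose proof is where all the combinatorial content lies (Claims 1--4, relying on Observations~\ref{OBS:SR} and~\ref{OBS:P} and on the canonical-tree machinery of Section~\ref{SEC:Mireille}). If I were writing this from scratch, the hard part would be identifying the right inductive invariant --- namely the conjunction of the three conditions packaged in the definition of ``respects $\P$'': that $\A(\Phi_\A(\theta)) = \P(\pi)$, that $\InTree(\Phi_\A(\theta)) = \lambda_\pi(\InTree(\theta))$, and that $\Phi_\A$ restricts to a bijection between fibers. The in-order-tree condition is not needed for the final enumerative/statistical conclusion on its own, but it is exactly what makes the induction through $\circ\, \S$ go through (it is used to track how preimages under $\S$ transform via the canonical tree), so it must be carried along. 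Given that these propositions are in hand, the proof of Theorem~\ref{THM:respects} is simply the assembly step, and Corollary~\ref{COR:bijection} (that $\Phi_\A$ is a statistic-preserving bijection between the permutations sorted by $\S \circ \A$ and those sorted by $\S \circ \R \circ \A$, hence Conjecture~\ref{CONJ:BouvelGuibert}) then follows by specializing to $\pi$ ranging over $\Av(231) \cap \A$ and summing over fibers.
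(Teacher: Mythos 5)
Your proof is correct and matches the paper's own argument: the paper likewise obtains Theorem~\ref{THM:respects} by combining Observation~\ref{OBS:identity} (base case) with Propositions~\ref{PROP:AoR_respects_P} and~\ref{PROP:AoS_respects_P} (inductive steps), peeling off the innermost operator exactly as you do. Your added remark that the induction only needs one syntactic expression for $\A$ is a sensible clarification, though the paper leaves it implicit.
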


\begin{corollary}\label{COR:bijection}
For any composition $\A$ of operators from $\{\S,\R\}$, $\Phi_{\A}$ is a size-preserving bijection between the set of permutations sorted by $\S \circ \A$ and those sorted by $\S \circ \R \circ \A$.
\end{corollary}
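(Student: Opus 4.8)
\textbf{Proof proposal for Corollary~\ref{COR:bijection}.}

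The plan is to deduce the corollary directly from Theorem~\ref{THM:respects} by unwinding the definition of ``respects $\P$'' and matching it up with the characterization of sortable permutations as $\Av(231)$. First I would recall that a permutation $\theta$ is sorted by $\S \circ \A$ precisely when $\A(\theta) \in \Av(231)$, i.e. when $\A(\theta)$ lies in $\Av(231) \cap \A$; similarly $\theta$ is sorted by $\S \circ \R \circ \A$ precisely when $(\R \circ \A)(\theta) \in \Av(231)$, equivalently $\A(\theta) \in \R(\Av(231)) = \Av(132)$, i.e. $\A(\theta) \in \Av(132) \cap \A$. So the domain of $\Phi_{\A}$ is exactly $\A^{-1}(\Av(231) \cap \A)$ and the target set we want to hit bijectively is $\A^{-1}(\Av(132) \cap \A)$.

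Next I would exploit that $\P$ is a bijection from $\Av(231)$ to $\Av(132)$ (established in Section~\ref{SEC:P}), which restricts to a bijection from $\Av(231) \cap \A$ to $\Av(132) \cap \A$: indeed, Theorem~\ref{THM:respects} gives that for each $\pi \in \Av(231) \cap \A$, the set $\A^{-1}(\P(\pi))$ is nonempty (it is in bijection with $\A^{-1}(\pi) \ne \emptyset$), so $\P(\pi) \in \A$; and conversely any $\sigma \in \Av(132) \cap \A$ is $\P(\pi)$ for the unique $\pi = \P^{-1}(\sigma) \in \Av(231)$, which then lies in $\A$ by the same argument applied with the roles reversed (or simply because $\P^{-1}$ is also built from the $\oplus / \ominus$ recursion and one can run the whole machinery symmetrically; alternatively observe $\A^{-1}(\pi) \neq \emptyset$ follows from $\A^{-1}(\P(\pi)) = \A^{-1}(\sigma) \ne \emptyset$ via the bijection in the second bullet of ``respects $\P$''). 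Thus the base index set $\Av(231) \cap \A$ is partitioned by the fibers $\A^{-1}(\pi)$, the target index set $\Av(132) \cap \A$ is partitioned by the fibers $\A^{-1}(\P(\pi))$, and $\P$ matches these index sets bijectively.

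Then I would assemble the bijection fiber by fiber. For each $\pi \in \Av(231) \cap \A$, the first bullet of ``respects $\P$'' says $\A(\Phi_{\A}(\theta)) = \P(\pi)$ for every $\theta \in \A^{-1}(\pi)$, so $\Phi_{\A}$ maps $\A^{-1}(\pi)$ into $\A^{-1}(\P(\pi))$; the second bullet says this restriction is a bijection $\A^{-1}(\pi) \to \A^{-1}(\P(\pi))$. Taking the disjoint union over all $\pi \in \Av(231) \cap \A$ — using that distinct $\pi$ give disjoint fibers on the left, distinct $\P(\pi)$ give disjoint fibers on the right, and $\P$ is a bijection between the two index sets — yields that $\Phi_{\A}$ is a bijection from $\bigsqcup_{\pi} \A^{-1}(\pi) = \A^{-1}(\Av(231)\cap\A)$ onto $\bigsqcup_{\pi} \A^{-1}(\P(\pi)) = \A^{-1}(\Av(132)\cap\A)$, which is exactly the claimed bijection between the permutations sorted by $\S \circ \A$ and those sorted by $\S \circ \R \circ \A$. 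Finally, $\Phi_{\A}(\theta) = \lambda_{\A(\theta)} \circ \theta$ is a relabeling of $\theta$, hence has the same size as $\theta$, so the bijection is size-preserving. The only genuinely delicate point is the surjectivity of $\P$ restricted to $\Av(231)\cap\A$ onto $\Av(132)\cap\A$ — i.e. checking that $\P$ really does carry the image of $\A$ within $\Av(231)$ onto the image of $\A$ within $\Av(132)$ — but this is immediate from the two bullets of Theorem~\ref{THM:respects}, since nonemptiness of a fiber is transported both ways by the fiberwise bijections $\Phi_\A$.
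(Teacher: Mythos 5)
Your argument is correct and takes essentially the same route as the paper's proof: deduce the corollary from Theorem~\ref{THM:respects} by partitioning both sorted sets into the fibers of $\A$ over $\Av(231)\cap\A$ and over its image under $\P$, and glue together the fiberwise bijections $\A^{-1}(\pi)\to\A^{-1}(\P(\pi))$ supplied by the definition of ``respects $\P$'' (noting that $\Phi_\A$ is a relabeling, hence size-preserving). The one caveat concerns the surjectivity point you rightly single out --- that $\P$ carries $\Av(231)\cap\A$ \emph{onto} $\Av(132)\cap\A$ --- where your appeal to the second bullet is circular (that bullet is only available once $\pi=\P^{-1}(\sigma)$ is already known to lie in the image of $\A$); your alternative suggestion of running the whole machinery symmetrically for $\P^{-1}$ is the honest repair, and the paper's own proof passes over this point without comment.
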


\begin{proof}
A permutation $\theta$ is sorted by $\S \circ \A$ if and only if there exists $\pi \in \Av(231)$ such that $\theta \in \A^{-1}(\pi)$. From $\pi = \A(\theta)$, we have $\Phi_{\A}(\theta)= \lambda_{\pi} \circ \theta$. Because $\A$ respects $\P$, $\theta \in \A^{-1}(\pi)$ is equivalent to $\Phi_{\A}(\theta) \in \A^{-1}(\P(\pi))$. Finally, because $\P$ is a bijection between $\Av(231)$ and $\Av(132)$, the existence of $\pi \in \Av(231)$ such that $\Phi_{\A}(\theta) \in \A^{-1}(\P(\pi))$ is equivalent to the existence of $\tau \in \Av(132)$ such that $\Phi_{\A}(\theta) \in \A^{-1}(\tau)$, \emph{i.e.} to $\Phi_{\A}(\theta)$ being sorted to $\S \circ \R \circ \A$. 
\end{proof}

Corollary~\ref{COR:bijection} proves the first part of Conjecture~\ref{CONJ:BouvelGuibert}, namely that the number of permutations of each size sorted by $\S \circ \A$ and by $\S \circ \R \circ \A$ is the same.

We now study the properties of bijections $\Phi_{\A}$ in somewhat greater detail. This will prove the second part of Conjecture~\ref{CONJ:BouvelGuibert}, that deals with permutation statistics equidistributed over the set of permutations sorted by $\S \circ \A$ and the set of those sorted by $\S \circ \R \circ \A$.

\subsection{Statistics preserved by the bijections $\Phi_{\A}$}\label{SEC:stat}

As before, $\A$ denotes any composition of operators from $\{\S,\R\}$. 

\begin{theorem}
$\Phi_{\A}$ preserves the shape of the in-order tree.
\end{theorem}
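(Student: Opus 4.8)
The plan is to show that $\InTree(\Phi_{\A}(\theta))$ and $\InTree(\theta)$ have the same underlying unlabeled tree for every $\theta$ sorted by $\S \circ \A$, and then invoke Observation~\ref{OBS:stat_on_InTree} to conclude that all the relevant statistics are preserved. The key point is that the notion of ``respecting $\P$'' was deliberately set up to carry exactly this information: recall that part of the definition of $\A$ respecting $\P$ is the assertion $\InTree(\Phi_{\A}(\theta)) = \lambda_{\pi}(\InTree(\theta))$ where $\pi = \A(\theta)$. So the bulk of the work has already been done in proving Theorem~\ref{THM:respects}.

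First I would fix $\theta$ sorted by $\S \circ \A$, set $\pi = \A(\theta) \in \Av(231)$, and recall from the definition of $\Phi_{\A}$ and the fact (Theorem~\ref{THM:respects}) that $\A$ respects $\P$ that $\Phi_{\A}(\theta) = \lambda_{\pi} \circ \theta$ and $\InTree(\Phi_{\A}(\theta)) = \lambda_{\pi}(\InTree(\theta))$. Now $\lambda_{\pi}$ is a relabeling, i.e.\ a bijection on the label set $[n]$; applying it to a tree only changes the labels of the vertices, not the shape (the parent/child/left/right structure). Hence $\lambda_{\pi}(\InTree(\theta))$ and $\InTree(\theta)$ have identical underlying unlabeled trees, and therefore so do $\InTree(\Phi_{\A}(\theta))$ and $\InTree(\theta)$. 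This is exactly the claim.

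Then I would state the consequence explicitly, for emphasis: by Observation~\ref{OBS:stat_on_InTree}, the shape of the in-order tree determines the number and positions of the right-to-left maxima, the number and positions of the left-to-right maxima, and the up-down word of a permutation; since $\Phi_{\A}$ preserves this shape, it preserves all of these statistics, and hence also every statistic that depends only on the up-down word (the descent set, the major index, the number of peaks, etc.). Combined with Corollary~\ref{COR:bijection}, this completes the proof of the second part of Conjecture~\ref{CONJ:BouvelGuibert}.

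I do not expect any real obstacle here: the genuine difficulty was absorbed into Propositions~\ref{PROP:AoR_respects_P} and~\ref{PROP:AoS_respects_P} and Observation~\ref{OBS:identity}, which established the in-order-tree clause of ``respects $\P$'' by an induction on the structure of $\A$ (the $\A \circ \S$ case, using Proposition~\ref{PROP:all_preimages_from_canonical_tree} and the operations $(\star)$, being the delicate one). The only thing to be careful about in the present proof is to make the trivial-sounding step ``a relabeling does not change the shape of a tree'' precise — one can simply note that the map on trees induced by a bijection of labels commutes with the forgetful map to unlabeled trees — but this requires no computation.
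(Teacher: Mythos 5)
Your proof is correct and follows exactly the paper's argument: invoke Theorem~\ref{THM:respects} to get $\InTree(\Phi_{\A}(\theta)) = \lambda_{\pi}(\InTree(\theta))$ for $\pi = \A(\theta)$, and note that a relabeling does not change the shape of the tree. The additional remarks about Observation~\ref{OBS:stat_on_InTree} correspond to the paper's subsequent corollary rather than to this theorem's proof, but they are accurate.
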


\begin{proof}
From Theorem~\ref{THM:respects}, $\A$ respects $\P$. Hence for all permutations $\theta$ sorted by $\S \circ \A$, and denoting $\pi = \A(\theta)$, we have $\InTree(\Phi_{\A}(\theta)) = \lambda_{\pi}(\InTree(\theta))$, so that $\InTree(\Phi_{\A}(\theta))$ and $\InTree(\theta)$ have the same shape. 
\end{proof}

Because the shape of the in-order tree determines many permutation statistics (see Observation~\ref{OBS:stat_on_InTree} p.~\pageref{OBS:stat_on_InTree}), we have:

\begin{corollary}
$\Phi_{\A}$ preserves the following statistics: the number and positions of the right-to-left maxima, the number and positions of the left-to-right maxima and the up-down word (and hence also the many classical permutation statistics determined by the up-down word).
\end{corollary}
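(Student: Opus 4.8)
The plan is simply to chain together two facts already established. The theorem immediately preceding this corollary states that $\Phi_{\A}$ preserves the shape of the in-order tree: for every permutation $\theta$ sorted by $\S \circ \A$, the trees $\InTree(\theta)$ and $\InTree(\Phi_{\A}(\theta))$ have the same underlying unlabeled shape. On the other hand, Observation~\ref{OBS:stat_on_InTree} asserts precisely that the shape of the in-order tree of a permutation determines the number and positions of its right-to-left maxima, the number and positions of its left-to-right maxima, and its up-down word. Combining these two facts, each of these statistics takes the same value on $\theta$ and on $\Phi_{\A}(\theta)$, which is exactly the claim.

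To spell this out a little: the in-order reading assigns to each vertex of a binary tree a position — its rank in the in-order traversal — depending only on the shape, so in two decreasing trees of equal shape the vertex of in-order position $i$ in the first corresponds to the vertex of in-order position $i$ in the second. As recalled in the proof of Observation~\ref{OBS:stat_on_InTree}, the right-to-left (resp.~left-to-right) maxima of a permutation occur exactly at the in-order positions of the vertices lying on the right (resp.~left) branch from the root; since $\InTree(\theta)$ and $\InTree(\Phi_{\A}(\theta))$ have the same shape, these are the same sets of positions (and in particular the same cardinalities), and likewise the two up-down words coincide by the same induction on the in-order reading. Hence the asserted equidistributions hold.

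Finally, the parenthetical remark follows at once, since any permutation statistic that is a function of the up-down word alone is automatically preserved once the up-down word is: this includes the descent set ($i$ is a descent exactly when the $i$-th letter of the up-down word is $d$), the major index (the sum of the descent positions), the number of peaks (the number of occurrences of the factor $ud$), and so on. There is really no obstacle here — all of the work has already been carried out in proving that $\Phi_{\A}$ preserves the shape of the in-order tree and in Observation~\ref{OBS:stat_on_InTree}, and the corollary is an immediate consequence of these two ingredients.
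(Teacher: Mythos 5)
Your proposal is correct and follows exactly the paper's route: the corollary is deduced by combining the preceding theorem (that $\Phi_{\A}$ preserves the shape of the in-order tree) with Observation~\ref{OBS:stat_on_InTree}, which is precisely what the paper does. The extra detail you supply about in-order positions is a faithful elaboration of the same argument.
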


\begin{theorem}\label{THM:zeil}
If $\A = \A_0 \circ \S$ for some arbitrary composition $\A_0$ of operators from $\{\S,\R\}$, then $\Phi_{\A}$ preserves the Zeilberger statistic, defined as: $\zeil(\pi) = \max \{k \mid n (n-1) \ldots (n-k+1) \text{ is a subword of } \pi\}$.
In addition, if there is at least an operator $\S \circ \R$ in the composition that defines $\A_0$, then $\Phi_{\A}$ also preserves the reversal of the above statistics: $\Rzeil(\pi) = \max \{k \mid (n-k+1) \ldots (n-1) n \text{ is a subword of } \pi\}$.
\end{theorem}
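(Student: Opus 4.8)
The plan is to establish the two preservation results separately, using the machinery already built, in particular the fact (Theorem~\ref{THM:respects}) that $\A$ respects $\P$, so that for every $\theta$ sorted by $\S \circ \A$ and $\pi = \A(\theta)$ we have $\A(\Phi_\A(\theta)) = \lambda_\pi \circ \pi$ and $\Phi_\A(\theta) = \lambda_\pi \circ \theta$. The crucial observation is that both $\zeil$ and $\Rzeil$ are "read off" from the position of the maximal values in a very local way: $\zeil(\sigma) \geq k$ iff $n, n-1, \dots, n-k+1$ appear in $\sigma$ in that order, and between consecutive ones there is never a value exceeding both (trivially, since they are the top $k$ values). This is exactly the hypothesis of Observations~\ref{OBS:SR} and~\ref{OBS:P}. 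So the strategy is: (i) show $\zeil$ is preserved by $\S$ itself, hence by $\A = \A_0 \circ \S$ applied to $\theta$, obtaining $\zeil(\theta)$ in terms of $\A(\theta) = \pi$; (ii) show that relabeling by $\lambda_\pi$ preserves $\zeil$ on $231$-avoiding permutations, so $\zeil(\pi) = \zeil(\lambda_\pi \circ \pi) = \zeil(\P(\pi))$; (iii) transfer back through $\S$ to conclude $\zeil(\Phi_\A(\theta)) = \zeil(\theta)$.

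For step (i): if $\sigma = \alpha n \beta$ then $\S(\sigma) = \S(\alpha)\S(\beta)n$, and the top $k$ values of $\sigma$ other than $n$ all lie in $\beta$ (since $n$ is an ascent-top... more precisely, once $n$ enters the stack everything after it comes out after $n$, but that is the content of the recursion). Actually the cleanest route: $\S(\sigma)$ ends with $n$, and the value $n-1$ appears in $\S(\sigma)$ immediately before $n$ iff $n-1 \in \beta$ and is the largest value of $\beta$, i.e.\ iff $n-1$ occurs after $n$ in $\sigma$; iterating gives $\zeil(\S(\sigma)) = \zeil(\sigma)$ whenever $\sigma$ has its descending run $n, n-1, \dots$ arranged appropriately — one argues by induction that the maximal suffix run of consecutive top values is preserved. (This is essentially Zeilberger's / West's observation and should be stated as a short lemma or folded into the proof.) For step (ii): in a $231$-avoiding permutation $\pi$, the values $n, n-1, \dots, n-k+1$ satisfy the hypothesis of Observation~\ref{OBS:P} pairwise — between any two of them there is no larger value, trivially — hence $\lambda_\pi$ preserves their relative order; combined with the fact that $\lambda_\pi$ fixes $n$ (it is order-preserving on the "no larger element between" relation and $n$ is the global max), one gets that $\lambda_\pi$ maps the top $k$ values of $\pi$ to the top $k$ values of $\P(\pi)$ and preserves which of them form a descending subword, whence $\zeil(\P(\pi)) = \zeil(\pi)$. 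Step (iii) is the same as (i) run in reverse, using $\S(\Phi_\A(\theta)) = \S(\lambda_\pi \circ \theta)$ and the analysis of $\zeil$ under $\S$ established in (i) — or more cleanly: $\A = \A_0 \circ \S$ so $\Phi_\A(\theta) \in \S^{-1}(\tau')$ where $\tau' = \lambda_\pi(\tau)$, $\tau = \S(\theta)$, and $\zeil$ is preserved going $\theta \to \tau \to \tau' \to \Phi_\A(\theta)$.

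For the $\Rzeil$ statement, the extra hypothesis that $\A_0 = \A_1 \circ \S \circ \R$ for some $\A_1$ lets us write $\A = \A_1 \circ \S \circ \R \circ \S$. The point is that $\Rzeil(\sigma) = \zeil(\R(\sigma))$, and $\R \circ \S$ turns the "prefix" structure into a "suffix" structure: one shows that $\S \circ \R \circ \S$ (or rather the relevant fragment) converts the statistic $\Rzeil$ of $\theta$ into a $\zeil$-type statistic that is then preserved by the remaining operators exactly as above. Concretely I would argue that $\Rzeil$ of a permutation in the image of $\S$ is determined by the leftmost branch behaviour of its canonical tree in the same way $\zeil$ is determined by the rightmost branch (the top $k$ values forming an increasing subword $(n-k+1)\cdots n$ correspond, via $\InTree$, to them lying on a left spine), and then invoke Observation~\ref{OBS:P_preserves_Tin}-style shape preservation: since $\Phi_\A$ preserves the shape of the in-order tree (proved just above in the paper), and the positions of $n, n-1, \dots$ relative to the spines are shape data once we know $\lambda_\pi$ acts order-preservingly on top values, we get $\Rzeil(\Phi_\A(\theta)) = \Rzeil(\theta)$.

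The main obstacle I expect is making precise the behaviour of $\zeil$ (and especially $\Rzeil$) under a single application of $\S$ — i.e.\ the bookkeeping in step (i) and its $\Rzeil$-analogue. The naive claim "$\S$ preserves $\zeil$" is \emph{not} literally true for all permutations; it is true that $\zeil(\S(\sigma))$ relates to the arrangement of the top values of $\sigma$, and one must be careful to track exactly the descending run of consecutive top values that survives. The clean way to handle this, and what I would actually write, is to bypass "$\S$ preserves $\zeil$" entirely and instead observe that for $\theta$ sorted by $\S \circ \A$ both $\theta$ and $\Phi_\A(\theta)$ have the same in-order tree shape, and that $\zeil(\sigma)$ for $\sigma$ stack-sortable after $\A$... — more robustly: $\zeil(\A(\theta))$ and $\zeil(\pi)$ coincide via Observations~\ref{OBS:SR}/\ref{OBS:P} because the top-$k$-in-order condition is exactly a "no larger element in between" condition, and $\zeil$ is an invariant of the pair (set of top $k$ values, their positions) which Observation~\ref{OBS:SR} shows is carried through $\A$ and Observation~\ref{OBS:P} shows is carried through the relabeling $\lambda_\pi$. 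So the real content is: $\zeil(\sigma) \ge k$ $\iff$ the top $k$ values of $\sigma$ appear in decreasing order $\iff$ (after $\A$, by Obs.~\ref{OBS:SR}) the top $k$ values of $\A(\sigma)$ appear in the same relative cyclic/linear positions — and this is precisely what is needed. I would phrase the whole proof around that single reduction rather than around properties of $\S$ in isolation.
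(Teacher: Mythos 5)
There is a genuine gap, and it sits exactly at the point you flag as "the real content" but then wave through. Applying Observations~\ref{OBS:SR} and~\ref{OBS:P} to pairs among the top $k$ values of $\pi=\A(\theta)$ only yields that $\lambda_\pi$ preserves their \emph{relative order}, i.e.\ $n=\lambda_\pi(n)>\lambda_\pi(n-1)>\cdots>\lambda_\pi(n-k+1)$. That is strictly weaker than what $\zeil(\Phi_\A(\theta))\ge k$ requires: since $\Phi_\A(\theta)=\lambda_\pi\circ\theta$, you need the actual values $n,n-1,\dots,n-k+1$ to occur in decreasing order in $\lambda_\pi\circ\theta$, i.e.\ you need $\lambda_\pi$ to \emph{fix} the top $k$ values, not merely to keep them in order (a priori $\lambda_\pi$ could send $n-1\mapsto n-3$, say, and then the decreasing subword you exhibit in $\Phi_\A(\theta)$ is $n,n-3,\dots$, which contributes nothing to $\zeil$). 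Your sentence "one gets that $\lambda_\pi$ maps the top $k$ values of $\pi$ to the top $k$ values of $\P(\pi)$" is precisely the hard step, and it does not follow from the order-preservation you established. The paper closes this gap with a separate argument: let $c$ be the least value such that $\lambda_\pi$ fixes every $d\ge c$; one shows $c\le n-\zeil(\theta)$ by contradiction, because otherwise $\S(\theta)=\sigma\, c(c+1)\cdots n$ with $\sigma$ a permutation of $[c-1]$, so the pair $y=c-1$ and $x=\lambda_\pi^{-1}(c-1)<c-1$ has no larger value between them in $\S(\theta)$, hence none in $\pi$ by Observation~\ref{OBS:SR}, hence $\lambda_\pi(x)<\lambda_\pi(y)$ by Observation~\ref{OBS:P} --- contradicting $\lambda_\pi(x)=c-1>\lambda_\pi(c-1)$. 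This uses the global suffix structure of $\S(\theta)$ (all values $\ge c$ sit at the end), which is information your "top $k$ values only" reduction never exploits. Incidentally, your step (i) as first stated is not just "not literally true": $\S(\sigma)$ always ends with its maximum, so $\zeil(\S(\sigma))=1$ identically, and no version of "$\zeil$ is carried through $\A$" can work at the level of $\pi$.

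The $\Rzeil$ half inherits the same problem and is additionally off-track: the left-spine/shape-preservation picture again only controls relative order of labels, whereas the paper's proof is quantitative --- it writes $\A=\B_0\circ\S\circ\R\circ\S^k$, applies the first part to $\rho=\R(\S^k(\theta))$ to get the bound $c\le n-\zeil(\rho)$ (the bound, not just the equality of statistics, is what gets reused), converts it via $\R$ into $\Rzeil(\S^k(\theta))\le n-c$, and then uses the elementary inequality $\Rzeil(\S(\sigma))\ge\Rzeil(\sigma)$ to pull the bound back to $\Rzeil(\theta)$, concluding that $\lambda_\pi$ fixes all of $\{n-\Rzeil(\theta),\dots,n\}$. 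None of this chain appears in your sketch, so the second statement is not proved either.
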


\begin{proof}
Consider $\theta$ a permutation sorted by $\S \circ \A$, and set $\pi = \A(\theta)$. Notice that $\pi \in \Av(231)$. Writing $\P(\pi) = \lambda_{\pi} \circ \pi = \pi'$, and using the primed notation throughout as before, we have $\Phi_{\A}(\theta) =  \theta'$. Let $c \leq n$ be the smallest value such that for all $d \geq c$, $d' = d$. In what follows, we assume that $c \neq 1$, or the results follow trivially from $\Phi_{\A}(\theta) =  \theta$.

Set $k = \zeil(\theta)$. This means that the right branch from the root of $\InTree(\theta)$ is labeled by $n, n-1, \ldots, n-k+1$, and that the right child of the vertex labeled by $n-k+1$ (if it exists) is not labeled by $n-k$. Because $\InTree(\theta') = \InTree(\theta)'$, 
to show that $\zeil(\theta')=\zeil(\theta)$, it is enough to prove that the relabeling $'$ does not affect the elements larger than or equal to $n-k$, \emph{i.e.} that $c \leq n-k$.

Assume to the contrary $c>n-k$ holds. Then the post-order reading of $\InTree(\theta)$ gives $\S(\theta) = \sigma c (c+1) \ldots (n-1) n$, where $\sigma$ contains all values from $1$ to $(c-1)$. By assumption $\sigma$ contains at least two distinct elements $y=c-1$ and $x$ such that $x'=c-1$. We have $x<y$ but $y'<x'$. Since $x < y = c-1$, and all elements greater than or equal to $c$ occur as a suffix of $\S(\theta)$, Observation~\ref{OBS:SR} implies that there is no element larger than $c-1$ occurring between $x$ and $y$ in $\A_0 \circ \S(\theta) = \A(\theta) = \pi$. But then Observation~\ref{OBS:P} gives $x'<y'$, providing a contradiction and concluding the proof of the first statement of Theorem~\ref{THM:zeil}. 

Let us assume now that $\A_0$ contains at least one operator $\S \circ \R$, and let us write $\A = \B_0 \circ \S \circ \R \circ \S^k$, with $k \geq 1$. We also set $\S^k(\theta) = \tau$ and $\R(\tau) = \rho$. We have $\Phi_{\B_0 \circ \S}(\rho) = \rho'$.
The first statement of Theorem~\ref{THM:zeil} applied on $\B_0 \circ \S$ ensures that $\zeil(\rho) = \zeil(\rho')$. Most importantly, the proof of this statement also ensures that $\zeil(\rho) \leq n-c$. Hence, applying operator $\R$ gives $\Rzeil(\tau) \leq n-c$. It is simple to notice that for any permutation $\sigma$, we have $\Rzeil(\S(\sigma)) \geq \Rzeil(\sigma)$. In particular, we obtain $n-c \geq \Rzeil(\tau) = \Rzeil(\S^k(\theta)) \geq \Rzeil(\theta)$. Writing $k=\Rzeil(\theta)$, we then have $c \leq n-k$, so that no element of $\{n-k, n-k+1, \ldots, n\}$ is affected by the relabeling $'$. From this fact, we easily deduce that $\Rzeil(\theta') = \Rzeil(\theta)$.
\end{proof}

\subsection{Stating the main result}

Putting everything together, we have proved Conjecture~\ref{CONJ:BouvelGuibert}, namely:

\begin{theorem}
\label{THM:main_result}
For any operator $\A$ which is a composition of the operators $\S$ and $\R$, the number of permutations of each size sorted by $\S \circ \A$ and by $\S \circ \R \circ \A$ is the same. 

Moreover, the following permutation statistics are equidistributed across these two sets: number and positions of the right-to-left maxima, number and positions of the left-to-right maxima and up-down word (and hence also the many classical permutation statistics determined by the up-down word). To this list we may add the statistic $\zeil$ when $\A = \A_0 \circ \S$, and $\Rzeil$ when $\A = \B_0 \circ \S \circ \R \circ \S^k$ for some $k \geq 1$. 

More precisely, $\Phi_\A$ defines a size-preserving bijection between the set of permutations sorted by $\S \circ \A$ and the set of those sorted by $\S \circ \R \circ \A$ that preserves these statistics.
\end{theorem}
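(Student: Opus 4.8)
The plan is to assemble the pieces already established rather than to prove anything new from scratch. The substantive input is Theorem~\ref{THM:respects}, which tells us that the given operator $\A$, being a composition of $\S$ and $\R$, respects $\P$; everything else is bookkeeping on top of this. From ``$\A$ respects $\P$'' I would invoke Corollary~\ref{COR:bijection} directly: it says precisely that $\Phi_\A$ is a size-preserving bijection from the set of permutations sorted by $\S \circ \A$ onto the set of those sorted by $\S \circ \R \circ \A$. In particular these two sets have equal cardinality in each size, which is the enumerative half of the statement, and it re-confirms the first sentence of Conjecture~\ref{CONJ:BouvelGuibert}.

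For the equidistribution of the combinatorial statistics I would then combine two facts: the theorem asserting that $\Phi_\A$ preserves the shape of the in-order tree, and Observation~\ref{OBS:stat_on_InTree}, which says that this shape determines the number and positions of the right-to-left maxima, the number and positions of the left-to-right maxima, and the up-down word. Together these give that $\Phi_\A$ carries each of these statistics to its own value on the image; since $\Phi_\A$ is a bijection between the two sorted sets, each such statistic has the same distribution on both. The classical statistics that are functions of the up-down word (descent set, major index, number of peaks, and so on) are then immediately equidistributed as well.

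For the remaining two statistics I would simply quote Theorem~\ref{THM:zeil}: when $\A = \A_0 \circ \S$ it shows that $\Phi_\A$ preserves $\zeil$, and when moreover $\A = \B_0 \circ \S \circ \R \circ \S^k$ with $k \geq 1$ it shows that $\Phi_\A$ preserves $\Rzeil$. Collecting the three paragraphs gives the ``more precisely'' clause in full — that $\Phi_\A$ is a single size-preserving bijection between the two sets which preserves all of the listed statistics — and equidistribution of each individual statistic follows formally from ``$\Phi_\A$ preserves $s$'' together with ``$\Phi_\A$ is a bijection onto the second set''. To close the loop with the phrasing of the conjecture, one notes that $\S \circ \A$ sorts $\theta$ exactly when $\A(\theta) \in \Av(231)$ while $\S \circ \R \circ \A$ sorts $\theta$ exactly when $\A(\theta) \in \Av(132)$, so the two sides are indeed the sorted sets named in the theorem.

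The only point needing a little care in this packaging — and the closest thing to an obstacle here — is making sure that the statistic-preservation statements, which are phrased for $\Phi_\A$ viewed as a map defined on the permutations sorted by $\S \circ \A$, are exactly what is required to conclude equidistribution across the pair of sets identified by Corollary~\ref{COR:bijection}; this is routine once one is precise about what ``$\Phi_\A$ preserves statistic $s$'' means. The genuine difficulty of the development lies upstream, in Proposition~\ref{PROP:AoS_respects_P} (the $\A \circ \S$ induction step, which is where the canonical tree of Bousquet-M\'elou and Observations~\ref{OBS:stat_on_InTree}\textemdash and the pair feeding Claims~1\textendash4\textemdash are really used) and in Theorem~\ref{THM:zeil}; the present theorem is their corollary.
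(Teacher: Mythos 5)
Your proposal is correct and matches the paper exactly: the paper offers no separate proof for this theorem, presenting it as the summary obtained by ``putting everything together,'' namely Corollary~\ref{COR:bijection} for the enumeration, the preservation of the in-order tree shape combined with Observation~\ref{OBS:stat_on_InTree} for the listed statistics, and Theorem~\ref{THM:zeil} for $\zeil$ and $\Rzeil$. Your identification of where the real work lies (Proposition~\ref{PROP:AoS_respects_P} and Theorem~\ref{THM:zeil}) is also accurate.
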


The statement of Theorem~\ref{THM:main_result} may in particular be considered for the operator $\A = \S$. This gives a size-preserving bijection $\Phi_\S$ between permutations sorted by $\S \circ \S$ and those sorted by $\S \circ \R \circ \S$. 
By means of generating trees,~\cite{Bouvel:Enumeration} implicitly defines another size-preserving bijection between these two sets. Both bijections preserves many permutation statistics, but we don't know whether they are actually two descriptions of the same bijection. 

\section{Wilf-equi\-va\-len\-ces derived from the bijection $\P$}\label{SEC:Wilf}

Recall a notation from the introduction: $\Av(B)$ is the set of all permutations that avoid simultaneously all the patterns in the set $B$. Such a set $\Av(B)$ is called a \emph{permutation class} (or \emph{class} for short), and $B$ is called its \emph{basis}. 

Two bases $B$ and $B'$ (or two classes $\Av(B)$ and $\Av(B')$) are said to be \emph{Wilf-equivalent} if $\Av(B)$ and $\Av(B')$ contain the same number of permutations of $[n]$ for every $n$. 
Coincidence of the enumeration sequences of two permutation classes (\emph{i.e.}, Wilf-equivalence) is a frequently observed phenomenon. The first (non trivial) example is that of $\Av(123)$ and $\Av(231)$, both enumerated by the Catalan numbers -- see~\cite{Simion:Restricted} for a bijective proof of this Wilf-equivalence. 
More examples of Wilf-equivalences may be found in~\cite{Kitaev} and references therein. 
One common form of Wilf-equivalence arises from symmetries of the avoidance relationship. For example, the reversal symmetry $\R$ provides a bijection between $\Av(231)$ and $\Av(132)$, proving that they are Wilf-equivalent. More generally, for any symmetry $\Z$ obtained composing reversal, complement and inverse, $\Av(\pi, \pi', \cdots, \pi'')$ and  $\Av(\Z(\pi), \Z(\pi'), \cdots, \Z(\pi''))$ are Wilf-equivalent, and we say that they are \emph{trivially} Wilf-equivalent. However, non trivial Wilf-equivalences are also somewhat common, and more interesting. 

In this section, we present some results showing how the bijection $\P$ from Section~\ref{SEC:P} furnishes a supply of Wilf-equivalences. 

\smallskip

Let us define two families of permutations $(\lambda_n)$ and $(\rho_n)$ recursively by $\lambda_1 = \rho_1 = 1$ and for all $n \geq 1$, $\lambda_{n+1} = 1 \ominus \rho_n$ and $\rho_{n+1} = \lambda_n \oplus 1$ -- see Figure~\ref{FIG:lambda_and_rho} for the diagrams of these permutations, and Section~\ref{SEC:P} for the definitions of $\oplus$ and $\ominus$. 
We also take the convention that $\lambda_0$ and $\rho_0$ denote the empty permutation $\varepsilon$. 
Of course, for every $n$, $\R(\lambda_n) = \rho_n$. 
Notice that for any $n$, $\lambda_n$ and $\rho_n$ are fixed by $\P$. 
Notice also that for any $n$, $\lambda_n$ is $\oplus$-indecomposable, \emph{i.e.} there are no non empty permutations $\alpha$ and $\beta$ such that $\lambda_n = \oplus[\alpha,\beta]$. 
Similarly, for any $n$, $\rho_n$ is $\ominus$-indecomposable.

\begin{figure}[h]
\begin{center}
$\lambda_n= \begin{array}{c}
\begin{tikzpicture}
\begin{scope}[scale=.35]
\draw[gray] (1,0) rectangle (4,3);
\draw (0,0) rectangle (4,4);
\draw (2.5,1.5) node {\small $\rho_{n-1}$};
\draw (0.5,3.5) [fill] circle (.25);
\end{scope} 
\end{tikzpicture}
\end{array}$ and $\rho_n= \begin{array}{c}
\begin{tikzpicture}
\begin{scope}[scale=.35]
\draw[gray] (0,0) rectangle (3,3);
\draw (0,0) rectangle (4,4);
\draw (1.5,1.5) node {\small $\lambda_{n-1}$};
\draw (3.5,3.5) [fill] circle (.25);
\end{scope} 
\end{tikzpicture}
\end{array}$; \qquad
$\lambda_6 = \begin{array}{c}
\begin{tikzpicture}
\begin{scope}[scale=.25]
\draw[gray!40] (3,0) rectangle (4,1);
\draw[gray!55] (2,0) rectangle (4,2);
\draw[gray!70] (2,0) rectangle (5,3);
\draw[gray!85] (1,0) rectangle (5,4);
\draw[gray] (1,0) rectangle (6,5);
\draw (0,0) rectangle (6,6);
\draw (0.5,5.5) [fill] circle (.25);
\draw (1.5,3.5) [fill] circle (.25);
\draw (2.5,1.5) [fill] circle (.25);
\draw (3.5,0.5) [fill] circle (.25);
\draw (4.5,2.5) [fill] circle (.25);
\draw (5.5,4.5) [fill] circle (.25);
\end{scope} 
\end{tikzpicture}
\end{array}$ and $\rho_6 = \begin{array}{c}
\begin{tikzpicture}
\begin{scope}[scale=.25]
\draw[gray!40] (2,0) rectangle (3,1);
\draw[gray!55] (2,0) rectangle (4,2);
\draw[gray!70] (1,0) rectangle (4,3);
\draw[gray!85] (1,0) rectangle (5,4);
\draw[gray] (0,0) rectangle (5,5);
\draw (0,0) rectangle (6,6);
\draw (0.5,4.5) [fill] circle (.25);
\draw (1.5,2.5) [fill] circle (.25);
\draw (2.5,0.5) [fill] circle (.25);
\draw (3.5,1.5) [fill] circle (.25);
\draw (4.5,3.5) [fill] circle (.25);
\draw (5.5,5.5) [fill] circle (.25);
\end{scope} 
\end{tikzpicture}
\end{array}$
\end{center}
\caption{Diagrams of $\lambda_n$ and $\rho_n$, for general $n$ and for $n=6$. \label{FIG:lambda_and_rho}}
\end{figure}
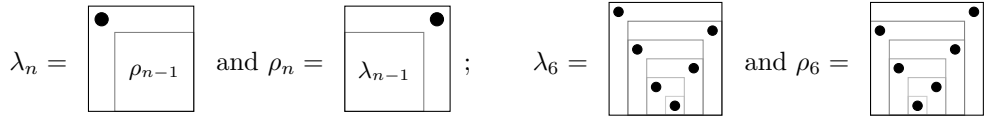

\begin{proposition}
\label{PROP:Wilf}
For every $n \geq 0$, and every $0\leq k \leq n-1$, letting $\pi=\lambda_k \oplus (1 \ominus \rho_{n-k-1})$, 
$\P$ is a size-preserving bijection between $\Av(231,\pi)$ and $\Av(132, \P(\pi))$. 
\end{proposition}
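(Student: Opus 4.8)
The plan is to reduce the proposition to a statement purely about pattern containment. Since $\P$ is a size-preserving bijection between $\Av(231)$ and $\Av(132)$, it suffices to prove that $\sigma \text{ contains } \pi \iff \P(\sigma) \text{ contains } \P(\pi)$ for every $\sigma \in \Av(231)$, where we write $\pi = \lambda_k \oplus (1 \ominus \rho_m)$ and $m = n-k-1$; restricting $\P$ to those $\sigma$ that avoid $\pi$ then gives the desired bijection onto $\Av(132, \P(\pi))$, automatically size-preserving. Two elementary remarks set this up. First, since $\P$ fixes every $\lambda_j$ and every $\rho_j$, the recursive rule $\P(\alpha \oplus (1 \ominus \beta)) = (\P(\alpha) \oplus 1) \ominus \P(\beta)$ applied with $\alpha = \lambda_k$ and $\beta = \rho_m$ gives $\P(\pi) = (\lambda_k \oplus 1) \ominus \rho_m$. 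Second, the family of patterns in play is closed under taking the ``halves'' that will appear below: both $\lambda_k = \lambda_0 \oplus (1 \ominus \rho_{k-1})$ and $\rho_m = \lambda_{m-1} \oplus (1 \ominus \rho_0)$ are again of the form $\lambda_i \oplus (1 \ominus \rho_j)$.

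The heart of the argument is a recursive description of containment. Writing $\sigma \in \Av(231)$ in its canonical form $\sigma = \alpha \oplus (1 \ominus \beta)$ — so that, reading left to right, the diagram of $\sigma$ is the block $\alpha$, then the maximum of $\sigma$, then a copy of the block $\beta$ — I would first establish
\[
\sigma \text{ contains } \pi \iff (\alpha \text{ contains } \lambda_k \text{ and } \beta \text{ contains } \rho_m) \text{ or } \alpha \text{ contains } \pi \text{ or } \beta \text{ contains } \pi .
\]
This follows from a short analysis of an occurrence of $\pi$ in $\sigma$: the maximum entry of $\sigma$ cannot be matched to a non-maximal entry of $\pi$ (no entry of $\sigma$ exceeds it), so either it is matched to the maximum of $\pi$, forcing the $\lambda_k$-part of $\pi$ to be matched inside $\alpha$ and the $\rho_m$-part inside $\beta$, or the maximum of $\sigma$ is unused and the occurrence splits according to an $\oplus$-decomposition of $\pi$. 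Since $\lambda_k$ and $1 \ominus \rho_m = \lambda_{m+1}$ are both $\oplus$-indecomposable, that decomposition is one of $\varepsilon \oplus \pi$, $\lambda_k \oplus \lambda_{m+1}$ and $\pi \oplus \varepsilon$, and the middle one is subsumed by the first disjunct because $\lambda_{m+1}$ contains $\rho_m$; the cases $k=0$ or $m=0$ are read with the conventions $\lambda_0 = \rho_0 = \varepsilon$. Applying the reversal operator $\R$ to this equivalence — using that $\R$ reverses containment, satisfies $\R(\alpha \oplus \beta) = \R(\beta) \ominus \R(\alpha)$, $\R(\alpha \ominus \beta) = \R(\beta) \oplus \R(\alpha)$ and $\R(\lambda_j) = \rho_j$, and sends the canonical $\Av(231)$-form to the canonical $\Av(132)$-form $(\gamma \oplus 1) \ominus \delta$ — gives the mirror statement: $(\gamma \oplus 1) \ominus \delta$ contains $(\lambda_k \oplus 1) \ominus \rho_m$ if and only if ($\gamma$ contains $\lambda_k$ and $\delta$ contains $\rho_m$), or $\gamma$ contains $(\lambda_k \oplus 1) \ominus \rho_m$, or $\delta$ contains $(\lambda_k \oplus 1) \ominus \rho_m$.

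With these two recursive characterizations available, I would prove $\sigma \text{ contains } \pi \iff \P(\sigma) \text{ contains } \P(\pi)$ by induction on $|\sigma|$. The base case $\sigma = \varepsilon$ is trivial. For $\sigma = \alpha \oplus (1 \ominus \beta)$ we have $\P(\sigma) = (\P(\alpha) \oplus 1) \ominus \P(\beta)$, so the first characterization applies to $\sigma$ and the second to $\P(\sigma)$, and it remains to match their right-hand sides disjunct by disjunct. The disjuncts ``$\alpha$ contains $\pi$'' and ``$\P(\alpha)$ contains $\P(\pi)$'' agree by the induction hypothesis for $\alpha$ (which is strictly smaller than $\sigma$); the disjuncts ``$\alpha$ contains $\lambda_k$'' and ``$\P(\alpha)$ contains $\lambda_k$'' agree by the induction hypothesis for $\alpha$ applied to the pattern $\lambda_k = \lambda_0 \oplus (1 \ominus \rho_{k-1})$, since $\P(\lambda_k) = \lambda_k$; and symmetrically for the two disjuncts involving $\beta$ and $\rho_m$. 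Hence the right-hand sides coincide, so the left-hand sides do too, which completes the induction and the proof. I expect the only genuinely delicate point to be making the containment case analysis airtight — in particular handling the degenerate patterns $\pi = \lambda_{m+1}$ (when $k = 0$) and $\pi = \rho_{k+1}$ (when $m = 0$), and verifying that no $\oplus$-decomposition of $\pi$ beyond the three listed can arise; the inductive step and the passage to the claimed bijection are then routine.
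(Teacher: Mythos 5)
Your proof is correct and follows essentially the same route as the paper: an induction driven by the decomposition $\sigma = \alpha \oplus (1 \ominus \beta)$, a case analysis of how an occurrence of the pattern distributes over the blocks, and the $\oplus$-/$\ominus$-indecomposability of the pieces $\lambda_j$, $\rho_j$ to control the split case. The only differences are organizational: you package the case analysis as an explicit containment recursion (stated on the $\Av(231)$ side and transported to the $\Av(132)$ side by $\R$) and run a single induction on $|\sigma|$ quantified over all patterns of the given form, handling both directions at once, whereas the paper uses an outer induction on $|\pi|$, argues on the $\Av(132)$ side, and leaves the converse direction to the reader.
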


In the above statement and in what follows, our convention is that for $n=0$ and any $k$, $\lambda_k \oplus (1 \ominus \rho_{n-k-1})$ denotes the empty permutation $\varepsilon$. 

\begin{proof}
For any $n\geq 0$, let us denote by $\mathcal{P}(n)$ the following property: 
for every pattern $\pi$ of the form $\lambda_k \oplus (1 \ominus \rho_{n-k-1})$ with $0\leq k \leq n-1$, for every $\sigma \in \Av(231)$, if $\P(\sigma)$ contains $\P(\pi)$ then $\sigma$ contains $\pi$. 
We prove by induction that $\mathcal{P}(n)$ holds for all $n$. 

For $n=0$, \emph{i.e.} $\pi = \varepsilon$, the statement $\mathcal{P}(0)$ is clear. 
So assume that $n\geq 1$ and that $\mathcal{P}(\ell)$ holds for every $\ell \leq n-1$. 
To prove $\mathcal{P}(n)$, let us fix some pattern $\pi = \lambda_k \oplus (1 \ominus \rho_{n-k-1})$ with $0\leq k \leq n-1$. Notice that $\P(\pi) = (\lambda_k \oplus 1) \ominus \rho_{n-k-1}$.

We now prove by induction on $|\sigma|$ that for every $\sigma \in \Av(231)$, $\P(\sigma)$ contains $\P(\pi)$ implies that $\sigma$ contains $\pi$.
If $|\sigma|=1$, the above holds immediately. 
So consider $\sigma \in \Av(231)$ with $|\sigma| \geq 2$, assume that $\P(\sigma)$ contains $\P(\pi)$, and fix an occurrence of $\P(\pi)$ in $\P(\sigma)$.
Recall that we can write $\sigma =\alpha \oplus (1 \ominus \beta)$, which yields $\P(\sigma) = (\P(\alpha) \oplus 1) \ominus \P(\beta)$. 
We distinguish several cases according to how the occurrence of $\P(\pi)$ in $\P(\sigma)$ spreads over $(\P(\alpha) \oplus 1) \ominus \P(\beta)$. 
\begin{itemize}
\item
If $\P(\pi)$ occurs in $\P(\alpha) \oplus 1$ with $k\neq n-1$, then $\P(\pi)$ occurs in $\P(\alpha)$. 
By the induction hypothesis, $\alpha$ contains $\pi$, and so does $\sigma$.
\item
If $\P(\pi)$ occurs in $\P(\alpha) \oplus 1$ with $k= n-1$, then $\lambda_{n-1}$ occurs in $\P(\alpha)$.
By $\mathcal{P}(n-1)$, we obtain that $\alpha$ contains $\lambda_{n-1}$ so that $\sigma$ contains $\pi$.
\item
If $\P(\pi)$ occurs in $\P(\beta)$, then by the induction hypothesis $\pi$ occurs in $\beta$, hence in $\sigma$. 
\item
Otherwise, we can decompose $\P(\pi)$ as $\pi_1 \ominus \pi_2$ with both $\pi_1$ and $\pi_2$ not empty, $\pi_1$ occuring in $\P(\alpha) \oplus 1$ and $\pi_2$ occuring in $\P(\beta)$. 
But $\P(\pi) = (\lambda_k \oplus 1) \ominus \rho_{n-k-1}$, and because $\lambda_k \oplus 1$ and $\rho_{n-k-1}$ are $\ominus$-indecomposable, we necessarily have $\pi_1= \lambda_k \oplus 1$ and $\pi_2 =\rho_{n-k-1}$. 
Therefore, we deduce that $\P(\alpha)$ contains $\lambda_k$ and that $\P(\beta)$ contains $\rho_{n-k-1}$. From $\mathcal{P}(k)$ and $\mathcal{P}(n-k-1)$, 
we obtain that $\alpha$ contains $\lambda_k$ and that $\beta$ contains $\rho_{n-k-1}$, implying that $\sigma$ contains $\pi$.
\end{itemize}

This concludes the proof that $\mathcal{P}(n)$ holds for every $n\geq 0$. 
Following the same steps, it may be proved that: 
for every pattern $\pi$ of the form $\lambda_k \oplus (1 \ominus \rho_{n-k-1})$ with $0\leq k \leq n-1$, for every $\sigma \in \Av(231)$, if $\sigma$ contains $\pi$ then $\P(\sigma)$ contains $\P(\pi)$.
Details are left to the reader. 
We conclude that 
for every pattern $\pi$ of the form $\lambda_k \oplus (1 \ominus \rho_{n-k-1})$ with $0\leq k \leq n-1$, 
for every $\sigma \in \Av(231)$, $\sigma$ contains $\pi$ if and only if $\P(\sigma)$ contains $\P(\pi)$. In other words $\sigma \in \Av(231, \pi)$ if and only if $\P(\sigma) \in \Av(132,\P(\pi))$, proving the announced statement. 
\end{proof}

Although this is not our main point here, it is worth noticing that there is a converse statement to Proposition~\ref{PROP:Wilf}. Namely: 

\begin{proposition}
Let $\pi$ be a $231$-avoiding permutation of size $n$. 
$\P$ is a size-preserving bijection between $\Av(231,\pi)$ and $\Av(132, \P(\pi))$ 
if and only if $\pi=\lambda_k \oplus (1 \ominus \rho_{n-k-1})$, for some $0\leq k \leq n-1$. 
\label{PROP:Wilf_converse}
\end{proposition}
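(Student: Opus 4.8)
\textbf{Proof plan for Proposition~\ref{PROP:Wilf_converse}.}

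The plan is to prove the contrapositive of the ``only if'' direction: if $\pi$ is a $231$-avoiding permutation of size $n$ that is \emph{not} of the form $\lambda_k \oplus (1 \ominus \rho_{n-k-1})$, then $\P$ fails to restrict to a bijection between $\Av(231,\pi)$ and $\Av(132,\P(\pi))$. Since $\P$ is always a bijection from $\Av(231)$ to $\Av(132)$, and it maps $\Av(231,\pi)$ into $\Av(132)$, the restriction is a bijection onto $\Av(132,\P(\pi))$ precisely when, for all $\sigma \in \Av(231)$, $\sigma$ contains $\pi$ if and only if $\P(\sigma)$ contains $\P(\pi)$. So I must exhibit a single witness $\sigma \in \Av(231)$ for which exactly one of ``$\sigma$ contains $\pi$'' and ``$\P(\sigma)$ contains $\P(\pi)$'' holds. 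The natural candidate is $\sigma = \pi$ itself: then $\sigma$ trivially contains $\pi$, and $\P(\sigma) = \P(\pi)$ trivially contains $\P(\pi)$, so that fails. The better candidate is to look for $\sigma$ with $|\sigma| = n$ that contains $\P(\pi)$ after applying $\P$ but does not contain $\pi$ --- i.e. take $\sigma = \P^{-1}(\tau)$ where $\tau \in \Av(132)$ is chosen to contain $\P(\pi)$ while $\P^{-1}(\tau)$ avoids $\pi$. Equivalently, and most cleanly: find $\rho \in \Av(231)$ of size $n$ with $\rho \neq \pi$, $\P(\rho)$ contains $\P(\pi)$ (which for size-$n$ permutations forces $\P(\rho) = \P(\pi)$, impossible) --- so in fact one must go to strictly larger size. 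The cleanest route: exhibit $\sigma \in \Av(231)$, necessarily of size $> n$, such that $\P(\sigma)$ contains $\P(\pi)$ but $\sigma$ avoids $\pi$, OR such that $\sigma$ contains $\pi$ but $\P(\sigma)$ avoids $\P(\pi)$.

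The key structural input is a characterization of the permutations $\lambda_k \oplus (1\ominus\rho_{n-k-1})$: these are exactly the $231$-avoiders $\pi$ such that in the recursive decomposition $\pi = \alpha \oplus (1\ominus\beta)$ one has $\alpha$ of the form $\lambda_k$ (so $\oplus$-indecomposable or empty, and itself recursively of this shape) and $\beta$ of the form $\rho_{n-k-1}$ (so $\ominus$-indecomposable or empty, recursively of this shape). So if $\pi$ is not of this form, then somewhere in its recursive decomposition tree there is a node where the left part $\alpha$ is $\oplus$-\emph{decomposable} into two nonempty pieces, or the right part $\beta$ is $\ominus$-\emph{decomposable} into two nonempty pieces. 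I would first reduce, using the recursive structure of both $\P$ and of pattern containment in $\oplus$- and $\ominus$-sums (exactly the case analysis already carried out in the proof of Proposition~\ref{PROP:Wilf}), to the ``first'' place where this failure occurs; this localizes the obstruction to a subpattern of $\pi$ of one of two explicit shapes: either $\pi$ contains as its relevant sub-block a permutation $\mu = \mu_1 \oplus \mu_2$ with both $\mu_i$ nonempty sitting in the ``$\lambda$-slot'', or dually $\mu = \mu_1 \ominus \mu_2$ sitting in the ``$\rho$-slot''. Then I construct the witness $\sigma$ by modifying $\pi$ locally at that block: replace the offending block so that, under $\P$, the $\ominus$-indecomposability (resp.\ $\oplus$-indecomposability) that was exploited in the ``$\mathcal{P}(n)$'' induction is broken, allowing $\P(\sigma)$ to pick up an occurrence of $\P(\pi)$ spread across the two pieces of $\P(\sigma)$'s top-level $\ominus$-decomposition in a way that $\sigma$'s top-level $\oplus$-decomposition cannot replicate into an occurrence of $\pi$.

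Concretely, the simplest such $\sigma$ should be obtainable by taking $\sigma = \P^{-1}\big((\nu_1 \oplus 1) \ominus \nu_2\big)$ for a suitable split $\P(\pi) = (\nu_1\oplus 1)\ominus\nu_2$ --- but since $\P(\pi) = (\lambda_k\oplus 1)\ominus\rho_{n-k-1}$ is itself $\ominus$-decomposable only in that one way when $\pi$ has the good form, the point is that when $\pi$ does \emph{not} have the good form, $\P(\pi)$ admits an occurrence inside some $(\P(\alpha)\oplus 1)\ominus\P(\beta)$ that straddles the two halves in more than one way, and picking the ``wrong'' straddle and pulling back via $\P^{-1}$ yields a $\sigma$ that avoids $\pi$. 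I would verify avoidance of $\pi$ in $\sigma$ directly from the decomposition, mirroring the four-case analysis of Proposition~\ref{PROP:Wilf}'s proof but observing that the case which there used $\ominus$-indecomposability of $\lambda_k\oplus 1$ and $\rho_{n-k-1}$ now genuinely fails. The main obstacle I anticipate is bookkeeping: making the ``first place of failure'' reduction fully rigorous and then writing down the witness $\sigma$ in closed form together with a clean verification that it avoids $\pi$ while $\P(\sigma)$ contains $\P(\pi)$; the conceptual content is entirely in the $\oplus$/$\ominus$-indecomposability dichotomy, but turning it into an explicit counterexample for every bad $\pi$ requires care with the two dual sub-cases and with small edge cases (e.g.\ $k=0$, $k=n-1$, or blocks of size one). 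Since the paper itself flags this as ``not our main point'' and leaves analogous details to the reader, a proof at the level of ``here is the witness and here is why it works, by the same case analysis as above'' is appropriate.
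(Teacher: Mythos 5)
Your overall strategy --- prove the contrapositive by exhibiting, for each $\pi$ not of the form $\lambda_k \oplus (1 \ominus \rho_{n-k-1})$, a witness $\sigma \in \Av(231)$ separating ``$\sigma$ contains $\pi$'' from ``$\P(\sigma)$ contains $\P(\pi)$'' --- is exactly the paper's, and your structural dichotomy (a failure of $\oplus$-indecomposability in the $\lambda$-slot or of $\ominus$-indecomposability in the $\rho$-slot somewhere in the recursive decomposition) correctly identifies where the obstruction lives. But the witness itself, which is the entire mathematical content of this direction, is missing. Your one concrete candidate, $\sigma = \P^{-1}\bigl((\nu_1\oplus 1)\ominus\nu_2\bigr)$ for a re-split of $\P(\pi)$, does not work (you half-concede this), and already for the smallest bad pattern $\pi = 321$ it returns $\sigma = 321$ itself. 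The paper's witnesses (Observations~\ref{OBS:alpha} and~\ref{OBS:beta}, Figures~\ref{FIG:OBS_alpha} and~\ref{FIG:OBS_beta}) are \emph{larger} permutations obtained by inserting an extra maximum element and regrouping the blocks of $\pi$ --- e.g.\ for $\pi=321$ the witness is $\sigma = 2143$, which avoids $321$ while $\P(2143)=3241$ contains $\P(321)=321$ --- and verifying that they work is a short but genuine diagram argument, not bookkeeping. This construction is not recoverable from the sketch you give.

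Two further points where your plan is thinner than it needs to be. First, your ``wrong straddle'' mechanism produces only witnesses with $\sigma$ avoiding $\pi$ and $\P(\sigma)$ containing $\P(\pi)$; this handles failures in the $\rho$-slot (the paper's Observation~\ref{OBS:beta}) but not failures in the $\lambda$-slot, where the separation goes the other way (e.g.\ for $\pi = 123$ one needs $\sigma = 1423$, which \emph{contains} $123$ while $\P(1423)=3412$ \emph{avoids} it; no witness of your one-sided type exists there). You mention the ``OR'' case at the outset but your proposed construction covers only one side. Second, your ``first place of failure'' localization is itself a lemma requiring proof: one must lift a counterexample for a sub-block $\alpha$ or $\beta$ to a counterexample for $\pi$, which is the content of the paper's Observation~\ref{OBS:stable} and again needs an explicit embedding. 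So the skeleton is right, but the load-bearing steps are all deferred.
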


The proof of Proposition~\ref{PROP:Wilf_converse} makes use of the three observations that follow. 

\begin{observation}\label{OBS:stable}
Let $\pi = \alpha \oplus ( 1 \ominus \beta) \in \Av(231)$ be a permutation such that $\P$ is a bijection between $\Av(231,\pi)$ and $\Av(132, \P(\pi))$. Then the same holds for $\alpha$ and $\beta$ instead of $\pi$.
\end{observation}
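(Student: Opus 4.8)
\textbf{Proof plan for Observation~\ref{OBS:stable}.}

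The plan is to exploit the fact that Wilf-equivalence via $\P$ is characterized, in the forward direction, by Proposition~\ref{PROP:Wilf}, but more usefully here by the \emph{pattern-containment} statement proved inside that proof: $\P$ is a bijection between $\Av(231,\pi)$ and $\Av(132,\P(\pi))$ precisely when, for every $\sigma \in \Av(231)$, $\sigma$ contains $\pi$ if and only if $\P(\sigma)$ contains $\P(\pi)$. So it suffices to show that if this ``containment-equivalence'' holds for $\pi = \alpha \oplus (1 \ominus \beta)$, then it holds for $\alpha$ and for $\beta$ separately. Since $\P$ is always a bijection $\Av(231)\to\Av(132)$, one direction of the containment-equivalence (that $\sigma \ni \alpha$ forces $\P(\sigma)\ni\P(\alpha)$, and likewise for $\beta$) is exactly the ``dual'' half asserted (with details left to the reader) in the proof of Proposition~\ref{PROP:Wilf}; in fact that half holds unconditionally for \emph{all} $231$-avoiding $\pi$, so there is nothing to prove there. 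The real content is the other direction: assuming $\P$ is a bijection for $\pi$, show that $\P(\sigma)$ containing $\P(\alpha)$ forces $\sigma$ to contain $\alpha$ (and the same with $\beta$).

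First I would reduce to $\alpha$; the case of $\beta$ is symmetric (indeed, one could also invoke the reversal symmetry $\R$, which conjugates $\Av(231)$ with $\Av(132)$ and swaps the roles of the $\oplus$- and $\ominus$-parts, though a direct argument is cleaner). Suppose $\sigma \in \Av(231)$ with $\P(\sigma)$ containing $\P(\alpha)$. The idea is to ``pad'' $\sigma$ on the right into a larger $231$-avoider $\widehat\sigma$ whose image under $\P$ is forced to contain all of $\P(\pi) = (\P(\alpha)\oplus 1)\ominus \P(\beta)$, then use the hypothesis on $\pi$ to conclude $\widehat\sigma$ contains $\pi$, and finally read off that the $\alpha$-part of that occurrence must already lie in $\sigma$. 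Concretely, set $\widehat\sigma = \sigma \oplus (1 \ominus \beta)$, which is in $\Av(231)$ since $\sigma,\beta\in\Av(231)$. By the recursive definition of $\P$, $\P(\widehat\sigma) = (\P(\sigma)\oplus 1)\ominus\P(\beta)$. Now $\P(\sigma)\oplus 1$ contains $\P(\alpha)\oplus 1$ (using that $\P(\sigma)\supseteq\P(\alpha)$), and $\P(\beta)$ trivially contains $\P(\beta)$; stacking these via the skew sum shows $\P(\widehat\sigma)$ contains $(\P(\alpha)\oplus 1)\ominus\P(\beta) = \P(\pi)$. By the hypothesis applied to $\widehat\sigma$, $\widehat\sigma$ contains $\pi = \alpha\oplus(1\ominus\beta)$.

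It then remains to extract from an occurrence of $\pi$ in $\widehat\sigma$ an occurrence of $\alpha$ in $\sigma$. Here is where I expect the one genuine subtlety: the occurrence of $\pi$ in $\widehat\sigma$ need not respect the decomposition $\widehat\sigma = \sigma \oplus (1\ominus\beta)$, so I must argue it can be rechosen so that it does — or at least that its $\alpha$-portion falls inside the $\sigma$-block. The cleanest route is to analyze where the value playing the role of the ``$1$'' in $\pi$ (the global maximum of $\pi$, sitting between the $\alpha$-block and the $1\ominus\beta$-block) can land. In $\widehat\sigma$, every entry of the $\sigma$-block is both to the left of and below every entry of the $\{$max$\}\cup\beta$-block. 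An occurrence of $\pi$ picks entries realizing $\alpha$ (small values, early positions), then the max of $\pi$, then entries realizing $\beta$ (an initial big value followed by smaller ones); a short case check on which block each chosen entry comes from — using $\oplus$-indecomposability of $\lambda_k$ and $\ominus$-indecomposability of $\rho_{n-k-1}$ in the style of the last bullet of the proof of Proposition~\ref{PROP:Wilf}, or more simply just the block structure of $\widehat\sigma$ — shows the $\alpha$-entries must all come from the $\sigma$-block, hence $\sigma$ contains $\alpha$. This block-bookkeeping step, not any deep idea, is the main obstacle, and it is entirely routine given the tools already developed; I would state it as the heart of the argument and carry it out explicitly for $\alpha$, leaving the mirror-image treatment of $\beta$ to the reader.
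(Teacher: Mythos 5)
Your construction is the right one — padding $\sigma$ to $\widehat\sigma=\sigma\oplus(1\ominus\beta)$ (resp.\ $\alpha\oplus(1\ominus\sigma)$) is exactly the permutation $\tau$ used in the paper's proof, and your block-bookkeeping extraction of an occurrence of $\alpha$ inside the $\sigma$-block is sound. But there is a genuine gap: you dismiss the direction ``$\sigma$ contains $\alpha$ $\Rightarrow$ $\P(\sigma)$ contains $\P(\alpha)$'' as holding \emph{unconditionally} for all $231$-avoiding patterns, and that claim is false. The half of Proposition~\ref{PROP:Wilf}'s proof that you invoke is stated only for the special patterns $\lambda_k\oplus(1\ominus\rho_{n-k-1})$, and it cannot extend to all of $\Av(231)$: by Proposition~\ref{PROP:Wilf_converse} the containment-equivalence fails for every other pattern, and the failures do occur in this direction. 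Indeed the paper's own Observation~\ref{OBS:alpha} (Figure~\ref{FIG:OBS_alpha}) manufactures a $\sigma$ \emph{containing} $\pi$ with $\P(\sigma)$ \emph{avoiding} $\P(\pi)$; concretely, $\sigma=1\,4\,2\,3$ contains $\alpha=1\,2\,3$ while $\P(\sigma)=3\,4\,1\,2$ avoids $\P(1\,2\,3)=1\,2\,3$. So for a general block $\alpha$ of $\pi$ this implication is precisely as much in need of proof as the converse you do treat.

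The repair is cheap and is what the paper does (phrased contrapositively): the same padding handles this direction too. If $\sigma$ contains $\alpha$, then $\widehat\sigma=\sigma\oplus(1\ominus\beta)$ contains $\pi$, so by the hypothesis on $\pi$ the permutation $\P(\widehat\sigma)=(\P(\sigma)\oplus 1)\ominus\P(\beta)$ contains $\P(\pi)=(\P(\alpha)\oplus 1)\ominus\P(\beta)$; a mirror-image block argument on the $132$-avoiding side — using that $\P(\alpha)\oplus 1$ is $\ominus$-indecomposable and that the $\P(\beta)$-block of $\P(\widehat\sigma)$ has only $|\beta|$ entries — forces the $\P(\alpha)$-part of that occurrence into $\P(\sigma)$. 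With that second half added (and its analogue for $\beta$), your argument becomes a correct, direct rephrasing of the paper's proof by contradiction.
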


\begin{proof}
Let us assume that $\P$ is not a bijection between $\Av(231,\alpha)$ and $\Av(132, \P(\alpha))$ (resp. $\Av(231,\beta)$ and $\Av(132, \P(\beta))$). Then one of the following holds: either there exists $\sigma \in \Av(231,\alpha)$ (resp.~$\sigma \in \Av(231,\beta)$) such that $\P(\sigma)$ contains $\P(\alpha)$ (resp.~$\P(\beta)$), or there exists $\sigma \in \Av(231)$ such that $\sigma$ contains $\alpha$ (resp.~$\beta$) but $\P(\sigma)$ avoids $\P(\alpha)$ (resp.~$\P(\beta)$). Consider the permutation $\tau = \sigma \oplus (1 \ominus \beta)$ (resp.~$\tau = \alpha \oplus (1 \ominus \sigma)$) and its image by $\P$: $\P(\tau)=(\P(\sigma) \oplus 1) \ominus \P(\beta)$ (resp.~$\P(\tau)=(\P(\alpha) \oplus 1) \ominus \P(\sigma)$).
In the first case, $\tau$ avoids $\pi$ but $\P(\tau)$ contains $\P(\pi)$, 
and in the second case, $\tau$ contains $\pi$ but $\P(\tau)$ avoids $\P(\pi)$. 
This is a contradiction to $\P$ being a bijection between $\Av(231,\pi)$ and $\Av(132, \P(\pi))$, and concludes the proof. 
\end{proof}

\begin{observation}\label{OBS:alpha}
Let $\pi = \alpha \oplus ( 1 \ominus \beta) \in \Av(231)$ be a permutation such that $\P$ is a bijection between $\Av(231,\pi)$ and $\Av(132, \P(\pi))$. 
Then $\alpha$ begins with its maximum.
\end{observation}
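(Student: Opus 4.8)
The statement to prove is that if $\pi=\alpha\oplus(1\ominus\beta)$ is such that $\P$ restricts to a bijection between $\Av(231,\pi)$ and $\Av(132,\P(\pi))$, then $\alpha$ begins with its maximum. The natural strategy is contrapositive combined with Observation~\ref{OBS:stable}: by that observation, $\P$ is also a bijection between $\Av(231,\alpha)$ and $\Av(132,\P(\alpha))$, so it suffices to show that whenever a $231$-avoiding permutation $\alpha$ does \emph{not} begin with its maximum, $\P$ fails to be such a bijection on $\Av(231,\alpha)$. This reduces the problem to producing a single witness permutation $\sigma\in\Av(231)$ for which ``$\sigma$ avoids $\alpha$ but $\P(\sigma)$ contains $\P(\alpha)$'' or the reverse.

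First I would write $\alpha$ in the canonical $231$-avoiding form $\alpha=\alpha_1\oplus(1\ominus\alpha_2)$; saying that $\alpha$ does not begin with its maximum means precisely that $\alpha_1\neq\varepsilon$, i.e.\ there is a nonempty block $\alpha_1$ of small values sitting to the lower-left before the position of the maximum. Correspondingly $\P(\alpha)=(\P(\alpha_1)\oplus 1)\ominus\P(\alpha_2)$, so $\P(\alpha)$ does \emph{not} end with its minimum. The plan is to exploit this asymmetry: the value $1$ (the global minimum) is in different ``structural positions'' in $\alpha$ versus $\P(\alpha)$, and I would build a $\sigma$ that destroys an occurrence of $\alpha$ by placing an extra point precisely where $\alpha$'s initial non-maximal structure lives, while $\P$ moves that extra point somewhere harmless. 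Concretely, a good candidate is to take $\sigma = 1 \ominus \alpha'$ for a suitable $\alpha'$ slightly smaller than $\alpha$, or alternatively $\sigma = \alpha_1' \oplus (1\ominus\alpha_2)$ where $\alpha_1'$ is obtained from $\alpha_1$ by deleting its first entry: then one checks (using the recursive definition of $\P$ and, crucially, that $\alpha_1$ nonempty forces the relevant $\oplus$- or $\ominus$-indecomposability to break the way we want) that $\sigma$ contains $\alpha$ but $\P(\sigma)$ avoids $\P(\alpha)$, or vice versa. The verification that the chosen $\sigma$ really does avoid $\alpha$ is the place where the hypothesis ``$\alpha_1\neq\varepsilon$'' is used essentially.

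The main obstacle is choosing the witness $\sigma$ so that \emph{both} halves of the failure are simultaneously easy to verify — it is not hard to arrange that $\sigma$ avoids $\alpha$, nor that some permutation's image contains $\P(\alpha)$, but getting one permutation to do both requires understanding exactly how $\P$ relocates the initial block $\alpha_1$. Here I expect to argue by tracking where, in $\P(\sigma)$, an occurrence of $\P(\alpha)=(\P(\alpha_1)\oplus 1)\ominus\P(\alpha_2)$ would have to sit: since $\P(\alpha_1)\oplus 1$ and $\P(\alpha_2)$ are $\ominus$-indecomposable (when nonempty), any occurrence must split across the top-level skew decomposition of $\P(\sigma)$ in a forced way, exactly as in the case analysis of the proof of Proposition~\ref{PROP:Wilf}; this rigidity is what lets the contradiction go through. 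I would also handle the degenerate small cases ($\alpha_2=\varepsilon$, or $\alpha_1$ a single point) separately and quickly, since there the structure is thin enough to exhibit the witness by hand. Once the witness is in place, the contrapositive closes immediately via Observation~\ref{OBS:stable}.
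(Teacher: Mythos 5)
There is a genuine gap at the very first step: your reduction via Observation~\ref{OBS:stable} replaces the statement to be proved by a \emph{false} one. You propose to show that whenever a $231$-avoiding permutation $\alpha$ does not begin with its maximum, $\P$ fails to be a bijection between $\Av(231,\alpha)$ and $\Av(132,\P(\alpha))$. Take $\alpha=12$: it does not begin with its maximum, yet $\P(12)=12$, both $\Av(231,12)$ and $\Av(132,12)$ consist exactly of the decreasing permutations, and $\P$ fixes every decreasing permutation, so $\P$ \emph{is} a bijection between these two classes. (More generally, Proposition~\ref{PROP:Wilf} shows that every $\lambda_k\oplus(1\ominus\rho_{m-k-1})$ with $k\geq 1$ is a counterexample to your reduced claim.) Consequently no witness permutation of the kind you describe in your second and third paragraphs can exist for $\alpha=12$, and the construction is bound to fail. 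Observation~\ref{OBS:stable} only transports the bijection property \emph{down} from $\pi$ to $\alpha$; its contrapositive is of no use here, because the property need not fail for $\alpha$ in isolation even when $\alpha$ does not begin with its maximum.

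The actual proof must keep the ambient structure of $\pi=\alpha\oplus(1\ominus\beta)$: writing $\alpha=\gamma\oplus(1\ominus\delta)$ with $\gamma\neq\varepsilon$, the paper exhibits the witness $\sigma=\gamma\oplus\bigl(1\ominus\bigl((1\ominus\delta)\oplus(1\ominus\beta)\bigr)\bigr)$, a permutation of size $|\pi|+1$ obtained by inserting a new global maximum between $\gamma$ and the rest of $\pi$. One then checks that $\sigma$ contains $\pi$ while $\P(\sigma)=(\P(\gamma)\oplus 1)\ominus\bigl(((1\ominus\P(\delta))\oplus 1)\ominus\P(\beta)\bigr)$ avoids $\P(\pi)$. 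The maximum of $\pi$ and the block $\beta$ enter this construction essentially. For instance, for $\pi=123$ (so $\alpha=12$, $\gamma=1$, $\delta=\beta=\varepsilon$) the witness is $\sigma=1423$, which contains $123$ while $\P(1423)=3412$ avoids $\P(123)=123$; this failure is invisible if one only looks at $\alpha=12$ on its own.
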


\begin{proof}
Let us assume that $\alpha$ does not begin with its maximum. As $\alpha$ avoids $231$ we can write $\alpha=\gamma \oplus (1 \ominus \delta)$ with $\gamma$ non empty. From $\pi = \gamma \oplus (1 \ominus \delta) \oplus ( 1 \ominus \beta)$ we deduce that 
$\P(\pi) = \left( \left( \left( \P(\gamma) \oplus 1 \right) \ominus \P(\delta) \right) \oplus 1\right) \ominus \P(\beta)$.
Consider now the permutation 
$ \sigma = \gamma \oplus \left( 1 \ominus \left( \left(1 \ominus \delta \right) \oplus (1 \ominus \beta) \right) \right)$ 
and its image under $\P$: 
$ \P(\sigma) = (\P(\gamma) \oplus 1) \ominus \Big( \big( (1 \ominus \P(\delta) ) \oplus 1\big) \ominus \P(\beta)\Big)$. 
Then $\sigma$ contains $\pi$ but $\P(\sigma)$ avoids $\P(\pi)$ (see Figure~\ref{FIG:OBS_alpha}). This contradicts that $\P$ is a bijection between $\Av(231,\pi)$ and $\Av(132, \P(\pi))$, and ensures that $\alpha$ must begin with its maximum. 
\end{proof}

\begin{figure}[ht]
\begin{center}
$\pi= \begin{array}{c}
\begin{tikzpicture}
\begin{scope}[scale=.35]
\draw (0,0) rectangle (2,2);
\draw (1,1) node {\small $\gamma$};
\draw (2.5,4.5) [fill] circle (.25);
\draw (3,2) rectangle (5,4);
\draw (4,3) node {\small $\delta$};
\draw (5.5,7.5) [fill] circle (.25);
\draw (6,5) rectangle (8,7);
\draw (7,6) node {\small $\beta$};
\end{scope} 
\end{tikzpicture}
\end{array}$ is contained in 
$\sigma= \begin{array}{c}
\begin{tikzpicture}
\begin{scope}[scale=.35]
\draw (0,0) rectangle (2,2);
\draw (1,1) node {\small $\gamma$};
\draw (2.5,8.5) [fill] circle (.25);
\draw (3.5,4.5) [fill] circle (.25);
\draw (4,2) rectangle (6,4);
\draw (5,3) node {\small $\delta$};
\draw (6.5,7.5) [fill] circle (.25);
\draw (7,5) rectangle (9,7);
\draw (8,6) node {\small $\beta$};
\end{scope}
\end{tikzpicture}
\end{array}$

\medskip

$\P(\pi)= \begin{array}{c}
\begin{tikzpicture}
\begin{scope}[scale=.35]
\draw (0,4) rectangle (2,6);
\draw (1,5) node {\small $\P(\gamma)$};
\draw (2.5,6.5) [fill] circle (.25);
\draw (3,2) rectangle (5,4);
\draw (4,3) node {\small $\P(\delta)$};
\draw (5.5,7.5) [fill] circle (.25);
\draw (6,0) rectangle (8,2);
\draw (7,1) node {\small $\P(\beta)$};
\end{scope}
\end{tikzpicture}
\end{array}$ is not contained in 
$\P(\sigma)= \begin{array}{c}
\begin{tikzpicture}
\begin{scope}[scale=.35]
\draw (0,6) rectangle (2,8);
\draw (1,7) node {\small $\P(\gamma)$};
\draw (2.5,8.5) [fill] circle (.25);
\draw (3.5,4.5) [fill] circle (.25);
\draw (4,2) rectangle (6,4);
\draw (5,3) node {\small $\P(\delta)$};
\draw (6.5,5.5) [fill] circle (.25);
\draw (7,0) rectangle (9,2);
\draw (8,1) node {\small $\P(\beta)$};
\end{scope}
\end{tikzpicture}
\end{array}$
\end{center}
\caption{Permutations $\pi$, $\P(\pi)$, $\sigma$ and $\P(\sigma)$ in the proof of Observation~\ref{OBS:alpha}. \label{FIG:OBS_alpha}}
\end{figure}

\begin{observation}\label{OBS:beta}
Let $\pi = \alpha \oplus ( 1 \ominus \beta) \in \Av(231)$ be a permutation such that $\P$ is a size-preserving bijection between $\Av(231,\pi)$ and $\Av(132, \P(\pi))$. 
Then $\beta$ ends with its maximum.
\end{observation}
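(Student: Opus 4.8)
The plan is to mimic the proof of Observation~\ref{OBS:alpha}, which dealt with $\alpha$, and adapt it to the symmetric situation on the $\beta$ side. The underlying principle is the same: if $\beta$ does \emph{not} end with its maximum, then we can build a permutation $\sigma$ that contains $\pi$ while $\P(\sigma)$ avoids $\P(\pi)$ (or vice versa), contradicting the assumption that $\P$ restricts to a bijection between $\Av(231,\pi)$ and $\Av(132,\P(\pi))$.

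First I would write the $231$-avoiding decomposition of $\beta$. Since $\beta$ avoids $231$, if it does not end with its maximum we can write $\beta = \gamma \oplus (1 \ominus \delta)$ with $\delta$ non-empty. Then $\pi = \alpha \oplus \big(1 \ominus (\gamma \oplus (1 \ominus \delta))\big)$, and applying the recursive rule for $\P$ we get an explicit (nested) expression for $\P(\pi)$, namely $\P(\pi) = (\P(\alpha)\oplus 1) \ominus \Big( (\P(\gamma)\oplus 1) \ominus \P(\delta)\Big)$. Next I would introduce the ``witness'' permutation $\sigma$ obtained by inserting an extra point to break the forced adjacency. The natural candidate, mirroring Observation~\ref{OBS:alpha}, is to reassociate the three blocks $\alpha$, $\gamma$, $\delta$ on the containment side so that $\sigma$ still contains $\pi$ but the image $\P(\sigma)$ loses the required occurrence. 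Concretely one takes $\sigma = \big(\alpha \oplus (1 \ominus \gamma)\big) \oplus (1 \ominus \delta)$ (the ``left-associated'' version), compute $\P(\sigma) = \big((\P(\alpha)\oplus 1)\ominus \P(\gamma)\big)\oplus 1) \ominus \P(\delta)$ via the recursive rule, and then check the two containment facts: $\sigma$ contains $\pi$ (clear, since $\sigma$ has $\alpha$, then a point above everything to its right except..., carefully: $\sigma$ contains $\alpha$, a value, $\gamma$, a value, $\delta$ arranged so that the two inserted values together with $\alpha<\gamma<\delta$ reproduce the $1\ominus(\ldots\oplus(1\ominus\ldots))$ shape of $\pi$), while $\P(\sigma)$ avoids $\P(\pi)$. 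The avoidance claim is verified by a block-position analysis exactly analogous to the case distinction already carried out in the proof of Proposition~\ref{PROP:Wilf}: the key point is that in $\P(\pi)$ the pattern is $\ominus$-decomposable with a specific pair of $\ominus$-indecomposable pieces, and in $\P(\sigma)$ those pieces cannot be placed because the relevant part of $\P(\sigma)$ has been made $\oplus$-decomposable rather than $\ominus$-decomposable at that level. I would accompany this with a figure in the style of Figure~\ref{FIG:OBS_alpha}.

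The main obstacle I anticipate is getting the reassociation exactly right so that both containments genuinely hold — it is easy to write down a $\sigma$ for which $\P(\sigma)$ still contains $\P(\pi)$, or for which $\sigma$ fails to contain $\pi$. In the $\alpha$-case of Observation~\ref{OBS:alpha} the asymmetry between $\oplus$ and $\ominus$ under $\P$ did the work; here the roles are swapped ($\beta$ sits inside the $1\ominus(\cdot)$ part, which $\P$ sends to the lower-right $\ominus$ block), so I need to take the $\ominus$-reassociation of $\beta$ rather than the $\oplus$-reassociation, and verify that after applying $\P$ this turns a forced $\ominus$-indecomposable factor of $\P(\pi)$ into something that does not fit. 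Once the correct $\sigma$ is identified, the verification is the same kind of routine block-bookkeeping as in the earlier proofs, so I would state the construction, point to the figure, and say ``the verification is analogous to the proof of Observation~\ref{OBS:alpha}'' — indeed, the symmetry here is essentially that of reversal/complement, and one could alternatively deduce Observation~\ref{OBS:beta} from Observation~\ref{OBS:alpha} by noting that $\pi \mapsto$ (reverse-complement of $\pi$) conjugates $\P$ with itself and swaps the roles of ``$\alpha$ begins with its max'' and ``$\beta$ ends with its max'', which may in fact be the cleanest route and worth checking first.
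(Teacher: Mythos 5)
Your overall strategy is the right one and is the paper's: assume $\beta=\gamma\oplus(1\ominus\delta)$ with $\delta\neq\varepsilon$ and exhibit a witness $\sigma$ whose containment behaviour with respect to $\pi$ and $\P(\pi)$ is incompatible with $\P$ restricting to a bijection. However, the concrete witness you propose does not work. Take the smallest case $\alpha=\gamma=\varepsilon$, $\delta=1$, so $\pi=321$ and $\P(\pi)=321$. Your $\sigma=\big(\alpha\oplus(1\ominus\gamma)\big)\oplus(1\ominus\delta)$ is then $1\oplus 21=132$, which does \emph{not} contain $\pi=321$ (your parenthetical ``clear'' is where the argument breaks: the point playing the role of the maximum of $\pi$ must sit above the copies of $\gamma$, $\delta$ and the second inserted point, but in your $\sigma$ the point inserted between $\alpha$ and $\gamma$ lies \emph{below} the entire block $1\ominus\delta$). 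Moreover $\P(132)=231$ also avoids $\P(\pi)=321$, so neither direction of the contradiction is available: the witness is simply inert. The paper's witness carries an extra inserted point, $\sigma=\big(\alpha\oplus(1\ominus(\gamma\oplus 1))\big)\oplus(1\ominus\delta)$ (see Figure~\ref{FIG:OBS_beta}), and it works in the direction opposite to the one you announce: $\sigma$ \emph{avoids} $\pi$ while $\P(\sigma)$ \emph{contains} $\P(\pi)$. In the example above this gives $\sigma=2143\in\Av(231,321)$ with $\P(\sigma)=3241$ containing $321$.

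Your fallback suggestion also does not go through as stated. Reverse-complement sends $\Av(231)$ to $\Av(312)$ and $\Av(132)$ to $\Av(213)$, so it cannot ``conjugate $\P$ with itself'': the domains and codomains do not match. The only nontrivial symmetry fixing the pattern $231$ is reverse-complement-inverse, and applied to $\pi=\alpha\oplus(1\ominus\beta)$ it produces $(1\ominus\beta')\oplus\alpha'$, which is not in the canonical form $\alpha''\oplus(1\ominus\beta'')$ used to define $\P$; so there is no easy swap of ``$\alpha$ begins with its maximum'' and ``$\beta$ ends with its maximum'' by symmetry. You correctly anticipated that identifying the right $\sigma$ is the crux of the proof, but that is precisely the step that is missing; to repair the argument you need the extra point $\gamma\oplus 1$ in the first block and you need to argue the containments in the reversed direction.
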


\begin{proof}
The proof is similar of that of Observation~\ref{OBS:alpha}, assuming that $\beta=\gamma \oplus (1 \ominus \delta)$ does not end with its maximum, and considering the permutation $\sigma$ shown in Figure~\ref{FIG:OBS_beta}. 
\end{proof}

\begin{figure}[ht]
\begin{center}$\pi= \begin{array}{c}
\begin{tikzpicture}
\begin{scope}[scale=.35]
\draw (0,0) rectangle (2,2);
\draw (1,1) node {\small $\alpha$};
\draw (2.5,7.5) [fill] circle (.25);
\draw (3,2) rectangle (5,4);
\draw (4,3) node {\small $\gamma$};
\draw (5.5,6.5) [fill] circle (.25);
\draw (6,4) rectangle (8,6);
\draw (7,5) node {\small $\delta$};
\end{scope} 
\end{tikzpicture}
\end{array}$ is not contained in 
$\sigma= \begin{array}{c}
\begin{tikzpicture}
\begin{scope}[scale=.35]
\draw (0,0) rectangle (2,2);
\draw (1,1) node {\small $\alpha$};
\draw (2.5,5.5) [fill] circle (.25);
\draw (3,2) rectangle (5,4);
\draw (4,3) node {\small $\gamma$};
\draw (5.5,4.5) [fill] circle (.25);
\draw (6.5,8.5) [fill] circle (.25);
\draw (7,6) rectangle (9,8);
\draw (8,7) node {\small $\delta$};
\end{scope}
\end{tikzpicture}
\end{array}$

\medskip

$\P(\pi)= \begin{array}{c}
\begin{tikzpicture}
\begin{scope}[scale=.35]
\draw (0,5) rectangle (2,7);
\draw (1,6) node {\small $\P(\alpha)$};
\draw (2.5,7.5) [fill] circle (.25);
\draw (3,2) rectangle (5,4);
\draw (4,3) node {\small $\P(\gamma)$};
\draw (5.5,4.5) [fill] circle (.25);
\draw (6,0) rectangle (8,2);
\draw (7,1) node {\small $\P(\delta)$};
\end{scope}
\end{tikzpicture}
\end{array}$ is contained in 
$\P(\sigma)= \begin{array}{c}
\begin{tikzpicture}
\begin{scope}[scale=.35]
\draw (0,5) rectangle (2,7);
\draw (1,6) node {\small $\P(\alpha)$};
\draw (2.5,7.5) [fill] circle (.25);
\draw (3,2) rectangle (5,4);
\draw (4,3) node {\small $\P(\gamma)$};
\draw (5.5,4.5) [fill] circle (.25);
\draw (6.5,8.5) [fill] circle (.25);
\draw (7,0) rectangle (9,2);
\draw (8,1) node {\small $\P(\delta)$};
\end{scope}
\end{tikzpicture}
\end{array}$
\end{center}
\caption{Permutations $\pi$, $\P(\pi)$, $\sigma$ and $\P(\sigma)$ in the proof of Observation~\ref{OBS:beta}. \label{FIG:OBS_beta}}
\end{figure}

\begin{proof}[Proof of Proposition~\ref{PROP:Wilf_converse}] 
With Proposition~\ref{PROP:Wilf}, we are left to prove that 
for any $\pi \in \Av(231)$, 
if $\P$ is a size-preserving bijection between $\Av(231,\pi)$ and $\Av(132, \P(\pi))$,
then $\pi=\lambda_k \oplus (1 \ominus \rho_{n-k-1})$, for some $0\leq k \leq n-1$. 

Consider such a permutation $\pi$, write $\pi = \alpha \oplus (1 \ominus \beta)$, and set $k=|\alpha|$. 
Observations~\ref{OBS:alpha} and~\ref{OBS:beta} ensure that $\alpha$ begins with its maximum and that $\beta$ ends with its maximum. 
Observation~\ref{OBS:stable} also ensures that $\P$ is a bijection between $\Av(231,\alpha)$ and $\Av(132, \P(\alpha))$ (resp. $\Av(231,\beta)$ and $\Av(132, \P(\beta))$). 
Hence, to conclude the proof of Proposition~\ref{PROP:Wilf_converse}, it is enough to prove that for every permutation $\sigma$ such that $\P$ is a bijection between $\Av(231,\sigma)$ and $\Av(132, \P(\sigma))$, if $\sigma$ starts (resp.~ends) with its maximum, then $\sigma = \lambda_{\ell}$ (resp.~$\rho_{\ell}$) for some $\ell$. 
This is obtained proving by induction the following statement: 
for every $\ell \geq 1$, for every $\sigma \in \Av(231)$ of size $\ell$, if $\P$ is a bijection between $\Av(231,\sigma)$ and $\Av(132, \P(\sigma))$ then $\sigma(\ell)=\ell $ implies $ \sigma = \rho_{\ell}$ and  $\sigma(1)=\ell $ implies $ \sigma = \lambda_{\ell}$. 
This is clear for $\ell =1$, so take $\ell \geq 2$ and assume the above statement holds for $\ell-1$. 
Consider $\sigma \in \Av(231)$ of size $\ell$ such that $\P$ is a bijection between $\Av(231,\sigma)$ and $\Av(132, \P(\sigma))$. 
If $\sigma(\ell)=\ell$ (resp.~$\sigma(1)=\ell$), then $\sigma = \tau \oplus 1$ (resp.~$\sigma = 1 \ominus \tau$). 
By Observations~\ref{OBS:stable} and~\ref{OBS:alpha} (resp.~\ref{OBS:beta}), $\P$ is a bijection between $\Av(231,\tau)$ and $\Av(132, \P(\tau))$ and $\tau$ starts (resp.~ends) with its maximum. 
By the inductive hypothesis, $\tau = \lambda_{\ell-1}$ (resp.~$\tau = \rho_{\ell-1}$), so we deduce that $\sigma = \rho_{\ell}$ (resp.~$\sigma = \lambda_{\ell}$). 
\end{proof}

A consequence of Proposition~\ref{PROP:Wilf} is that 
for all $n$ and $0\leq k \leq n-1$, $\Av(231,\lambda_k \oplus (1 \ominus \rho_{n-k-1}))$ and $\Av(231, \lambda_{n-k-1} \oplus (1 \ominus \rho_{k}))$ are Wilf-equivalent. 
Indeed, letting $\pi=\lambda_k \oplus (1 \ominus \rho_{n-k-1})$, $\R \circ \P$ provides a size-preserving bijection from $\Av(231,\pi)$ to $\Av(231, \R(\P(\pi)))$, and $\R(\P(\pi))= \lambda_{n-k-1} \oplus (1 \ominus \rho_k)$. 
For every $n\geq 1$, Proposition~\ref{PROP:Wilf} therefore produces $n$ Wilf-equivalences (although, with some redundancies) between pairs of permutation classes, both of the form $\Av(231,\tau)$ with $\tau$ avoiding $231$. 

Moreover, as we explain in~\cite{FPSAC2013}, it is possible to compute the generating function of $\Av(231,\pi)$, for any $\pi=\lambda_k \oplus (1 \ominus \rho_{n-k-1})$. 
Regardless of $k$, this generating function is $F_n$ defined recursively by $F_1(t) = 1$, and for $n \geq 1, F_{n+1}(t) = \frac{1}{1-t F_n(t)}$ for $n \geq 1$. 
Therefore, for any fixed $n$, all classes $\Av(231,\lambda_k \oplus (1 \ominus \rho_{n-k-1}))$ are Wilf-equivalent. 
The proof of this result is analytic, and does not allow to find a bijection between any two such classes. 
In~\cite{Mansour:Restricted,Mansour:Chebyshev}, the authors are also interested in the enumeration of permutation classes of the form $\Av(231,\tau)$ -- or rather their reversal $\Av(132,\R(\tau))$. They show in particular that the generating function of a class $\Av(231,\tau)$ is also $F_n$, when $\tau$ is (the reversal of) a \emph{layered} permutation of size $n$ with two layers, or a \emph{wedge} permutation of size $n$. 
The proof is also analytic, and the authors indicate that it would be very interesting to find a bijective proof of these results. 

In the forthcoming paper~\cite{futurework}, we provide a unified proof and a generalization of both these results. 
More precisely, we describe a family $\bigclass$ of permutations such that for any $\tau$ and $\tau'$ of the same size $n$ in $\bigclass$, $\Av(231,\tau)$ and $\Av(231,\tau')$ are Wilf-equivalent, and their generating function is $F_n$. 
The family $\bigclass$ contains (reversals of) layered permutations with two layers, (reversals of) wedge permutations, and permutations of the form $\lambda_k \oplus (1 \ominus \rho_{n-k-1})$, but also much more: it actually contains $M_n$ permutations of any size $n$, where $M_n$ is the $n$-th Motzkin number. 
Unlike previous analytical proofs, our proof has a very combinatorial flavor, 
and allows the derivation of bijections between $\Av(231, \tau)$ and $\Av(231, \tau')$ for any $\tau$ and $\tau'$ of the same size in~$\bigclass$. 

\bigskip

\paragraph*{Acknowledgements} 

Many thanks to Olivier Guibert whose experimental investigations were critical in framing the questions addressed in this paper. Much of the work in this paper was supported by the software suite \emph{PermLab} \cite{PermLab1.0}, and extensions of it. Michael Albert would like to thank LaBRI for their hospitality and support in August and September of 2012 when most of this work was carried out.

\begin{bibdiv}
\begin{biblist}

\bib{PermLab1.0}{misc}{
  author   = {Albert, Michael},
  title    = {PermLab: Software for Permutation Patterns},
  year     =  {2012},
  note = {\url{http://www.cs.otago.ac.nz/PermLab/}}
}

\bib{AA05}{article}{
 author = {Albert, Michael},
 author = {Atkinson, M.~D.},
 title = {Simple permutations and pattern restricted permutations},
 journal = {Discrete Math.},
 volume = {300},
 number = {1-3},
 pages={1--15},
 year = {2005},
}

\bib{FPSAC2013}{article}{
 author = {Albert, Michael},
 author = {Bouvel, Mathilde},
 title = {Operators of equivalent sorting power and related Wilf-equivalences},
 journal = {DMTCS Proceedings (FPSAC)},
 volume = {AS},
 number = {671--682},
 year = {2013},
}

\bib{futurework}{article}{
 author = {Albert, Michael},
 author = {Bouvel, Mathilde},
 title = {A general theory of Wilf-equivalence for classes of non-crossing matchings, In preparation},
 year = {2014+},
}

\bib{BBL98}{article}{
  author = {Bose, Prosenjit},
  author = {Buss, Jonathan~F.},
  author = {Lubiw, Anna},
  title = {Pattern matching for permutations},
  journal = {Information Processing Letters},
  year = {1998}, 
  volume = {65},
  pages = {277--283}
}

\bib{Bousquet:Sorted}{article}{
   author={Bousquet-M{\'e}lou, Mireille},
   title={Sorted and/or sortable permutations},
   journal={Discrete Math.},
   volume={225},
   date={2000},
   number={1-3},
   pages={25--50},
   issn={0012-365X},
   review={\MR{1798323 (2001j:05003)}},
   doi={10.1016/S0012-365X(00)00146-1},
}

\bib{Bouvel:Enumeration}{article}{
 author = {Bouvel, Mathilde},
 author = {Guibert, Olivier},
 title = {Enumeration of permutations sorted with two passes through a stack and $D_8$ symmetries},
 journal = {DMTCS Proceedings (FPSAC)},
 volume = {AR},
 pages = {757--768}, 
 year = {2012},
 issn = {1365--8050},
}

\bib{ClKi08}{article}{
 author = {Claesson, Anders},
 author = {Kitaev, Sergey},
	title = {Classification of bijections between 321- and 132-avoiding permutations},
	journal = {S\'eminaire Lotharingien de Combinatoire},
	volume = {60},
	year = {2008},
	note = {Article B60d}
}

\bib{ClUl12}{article}{
 author = {Claesson, Anders},
 author = {\'Ulfarsson, Henning},
 title = {Sorting and preimages of pattern classes},
 journal = {DMTCS Proceedings (FPSAC)},
 volume = {AR},
 pages = {595--606}, 
 year = {2012},
}

\bib{DDJSS12}{article}{
 author = {Dokos, Theodore},
 author = {Dwyer, Tim},
 author = {Johnson, Bryan P.},
 author = {Sagan, Bruce E.},
 author = {Selsor, Kimberly},
 title = {Permutation Patterns and Statistics},
 journal = {Discrete Math.},
 volume = {312},
 pages = {2760--2775},
 year = {2012}
}

\bib{Kitaev}{book}{
 author = {Kitaev, Sergey},
 title = {Patterns in permutations and words},
 publisher = {Springer Verlag (EATCS monographs in Theoretical Computer Science book series)},
 year = {2011},
}

\bib{Knuth:Art}{book}{
    author = {Knuth, Donald E.},
     title = {The Art of Computer Programming},
   edition = {Second edition},
      note = {Volume 1: Fundamental algorithms,
              Addison-Wesley Series in Computer Science and Information
              Processing},
 publisher = {Addison-Wesley Publishing Co., Reading,
              Mass.-London-Amsterdam},
      year = {1975},
     pages = {xxii+634 pp.},
   mrclass = {68A05},
  mrnumber = {MR0378456 (51 \#14624)},
}

\bib{Mansour:Restricted}{article}{
 author = {Mansour, Toufik},
 author = {Vainshtein, Alek},
 title = {Restricted $132$-avoiding permutations},
 journal = {Adv. in Applied Math.},
 volume = {26},
 pages = {258--269},
 year = {2001}
}

\bib{Mansour:Chebyshev}{article}{
 author = {Mansour, Toufik},
 author = {Vainshtein, Alek},
 title = {Restricted permutations and Chebyshev polynomials},
 journal = {S\'eminaire Lotharingien de Combinatoire},
 volume = {47},
 note = {Article B47c},
 year = {2002}
}

\bib{Pratt:Computing}{article}{
   author={Pratt, Vaughan R.},
   title={Computing permutations with double-ended queues. Parallel stacks
   and parallel queues},
   conference={
      title={Fifth Annual ACM Symposium on Theory of Computing (Austin,
      Tex., 1973)},
   },
   book={
      publisher={Assoc. Comput. Mach., New York},
   },
   date={1973},
   pages={268--277},
   review={\MR{0489115 (58 \#8588)}},
}

\bib{Simion:Restricted}{article}{
   author={Simion, Rodica},
   author={Schmidt, Frank W.},
   title={Restricted permutations},
   journal={European J. Combin.},
   volume={6},
   date={1985},
   pages={383--406},
}

\bib{Tarjan:Sorting}{article}{
   author={Tarjan, Robert},
   title={Sorting using networks of queues and stacks},
   journal={J. Assoc. Comput. Mach.},
   volume={19},
   date={1972},
   pages={341--346},
   issn={0004-5411},
   review={\MR{0298803 (45 \#7852)}},
}

\bib{Ulf12}{article}{
 author= {\'Ulfarsson, Henning},
 title= {Describing West-3-stack-sortable permutations with permutation patterns},
 journal={S\'eminaire Lotharingien de Combinatoire},
 volume = {67},
 year = {2012},
}

\bib{West:Twice}{article}{
   author={West, Julian},
   title={Sorting twice through a stack},
   note={Conference on Formal Power Series and Algebraic Combinatorics
   (Bordeaux, 1991)},
   journal={Theoret. Comput. Sci.},
   volume={117},
   date={1993},
   number={1-2},
   pages={303--313},
   issn={0304-3975},
   review={\MR{1235186 (94j:68045)}},
   doi={10.1016/0304-3975(93)90321-J},
}

\bib{Zeilberger:West}{article}{
   author={Zeilberger, Doron},
   title={A proof of Julian West's conjecture that the number of
   two-stack-sortable permutations of length $n$ is
   $2(3n)!/((n+1)!(2n+1)!)$},
   journal={Discrete Math.},
   volume={102},
   date={1992},
   number={1},
   pages={85--93},
   issn={0012-365X},
   review={\MR{1168135 (93c:05011)}},
   doi={10.1016/0012-365X(92)90351-F},
}

\end{biblist}
\end{bibdiv}

\end{document}